\numberwithin{equation}{section}
\numberwithin{figure}{section}
\theoremstyle{plain}
\newtheorem{thm}{\protect\theoremname}[section]
\theoremstyle{plain}
\newtheorem{fact}[thm]{\protect\factname}
\theoremstyle{remark}
\newtheorem{rem}[thm]{\protect\remarkname}
\theoremstyle{definition}
\newtheorem{example}[thm]{\protect\examplename}
\theoremstyle{plain}
\newtheorem{prop}[thm]{\protect\propositionname}
\theoremstyle{definition}
\newtheorem{defn}[thm]{\protect\definitionname}
\theoremstyle{plain}
\newtheorem{cor}[thm]{\protect\corollaryname}
\theoremstyle{remark}
\newtheorem{claim}[thm]{\protect\claimname}
\theoremstyle{plain}
\newtheorem{lem}[thm]{\protect\lemmaname}
\theoremstyle{plain}
\newtheorem{conjecture}[thm]{\protect\conjecturename}
\theoremstyle{plain}
\newtheorem{question}[thm]{\protect\questionname}
\providecommand{\claimname}{Claim}
\providecommand{\conjecturename}{Conjecture}
\providecommand{\corollaryname}{Corollary}
\providecommand{\definitionname}{Definition}
\providecommand{\examplename}{Example}
\providecommand{\factname}{Fact}
\providecommand{\lemmaname}{Lemma}
\providecommand{\propositionname}{Proposition}
\providecommand{\questionname}{Question}
\providecommand{\remarkname}{Remark}
\providecommand{\theoremname}{Theorem}
\begin{document}
\global\long\def\code#1{\ulcorner#1\urcorner}%
\global\long\def\p{\mathbf{p}}%
\global\long\def\q{\mathbf{q}}%
\global\long\def\C{\mathfrak{C}}%
\global\long\def\SS{\mathcal{P}}%
 
\global\long\def\pr{\operatorname{pr}}%
\global\long\def\image{\operatorname{im}}%
\global\long\def\otp{\operatorname{otp}}%
\global\long\def\dec{\operatorname{dec}}%
\global\long\def\suc{\operatorname{suc}}%
\global\long\def\pre{\operatorname{pre}}%
\global\long\def\qe{\operatorname{qf}}%
 
\global\long\def\ind{\operatorname{ind}}%
\global\long\def\Nind{\operatorname{Nind}}%
\global\long\def\lev{\operatorname{lev}}%
\global\long\def\Suc{\operatorname{Suc}}%
\global\long\def\HNind{\operatorname{HNind}}%
\global\long\def\minb{{\lim}}%
\global\long\def\concat{\frown}%
\global\long\def\cl{\operatorname{cl}}%
\global\long\def\tp{\operatorname{tp}}%
\global\long\def\id{\operatorname{id}}%
\global\long\def\cons{\left(\star\right)}%
\global\long\def\qf{\operatorname{qf}}%
\global\long\def\ai{\operatorname{ai}}%
\global\long\def\dtp{\operatorname{dtp}}%
\global\long\def\acl{\operatorname{acl}}%
\global\long\def\nb{\operatorname{nb}}%
\global\long\def\limb{{\lim}}%
\global\long\def\leftexp#1#2{{\vphantom{#2}}^{#1}{#2}}%
\global\long\def\intr{\operatorname{interval}}%
\global\long\def\atom{\emph{at}}%
\global\long\def\I{\mathfrak{I}}%
\global\long\def\uf{\operatorname{uf}}%
\global\long\def\ded{\operatorname{ded}}%
\global\long\def\Ded{\operatorname{Ded}}%
\global\long\def\Df{\operatorname{Df}}%
\global\long\def\Th{\operatorname{Th}}%
\global\long\def\eq{\operatorname{eq}}%
\global\long\def\Aut{\operatorname{Aut}}%
\global\long\def\ac{ac}%
\global\long\def\DfOne{\operatorname{df}_{\operatorname{iso}}}%
\global\long\def\modp#1{\pmod#1}%
\global\long\def\sequence#1#2{\left\langle #1\,\middle|\,#2\right\rangle }%
\global\long\def\set#1#2{\left\{  #1\,\middle|\,#2\right\}  }%
\global\long\def\Diag{\operatorname{Diag}}%
\global\long\def\Nn{\mathbb{N}}%
\global\long\def\mathrela#1{\mathrel{#1}}%
\global\long\def\twiddle{\mathord{\sim}}%
\global\long\def\mathordi#1{\mathord{#1}}%
\global\long\def\Qq{\mathbb{Q}}%
\global\long\def\dense{\operatorname{dense}}%
\global\long\def\Rr{\mathbb{R}}%
 
\global\long\def\cof{\operatorname{cof}}%
\global\long\def\tr{\operatorname{tr}}%
\global\long\def\treeexp#1#2{#1^{\left\langle #2\right\rangle _{\tr}}}%
\global\long\def\x{\times}%
\global\long\def\forces{\Vdash}%
\global\long\def\Vv{\mathbb{V}}%
\global\long\def\Uu{\mathbb{U}}%
\global\long\def\tauname{\dot{\tau}}%
\global\long\def\ScottPsi{\Psi}%
\global\long\def\cont{2^{\aleph_{0}}}%
\global\long\def\MA#1{{MA}_{#1}}%
\global\long\def\rank#1#2{R_{#1}\left(#2\right)}%
\global\long\def\cal#1{\mathcal{#1}}%

\def\Ind#1#2{#1\setbox0=\hbox{$#1x$}\kern\wd0\hbox to 0pt{\hss$#1\mid$\hss} \lower.9\ht0\hbox to 0pt{\hss$#1\smile$\hss}\kern\wd0} 
\def\Notind#1#2{#1\setbox0=\hbox{$#1x$}\kern\wd0\hbox to 0pt{\mathchardef \nn="3236\hss$#1\nn$\kern1.4\wd0\hss}\hbox to 0pt{\hss$#1\mid$\hss}\lower.9\ht0 \hbox to 0pt{\hss$#1\smile$\hss}\kern\wd0} 
\def\nind{\mathop{\mathpalette\Notind{}}} 

\global\long\def\ind{\mathop{\mathpalette\Ind{}}}%
\global\long\def\Age{\operatorname{Age}}%
\global\long\def\lex{\operatorname{lex}}%
\global\long\def\len{\operatorname{len}}%

\global\long\def\dom{\operatorname{Dom}}%
\global\long\def\res{\operatorname{res}}%
\global\long\def\alg{\operatorname{alg}}%
\global\long\def\dcl{\operatorname{dcl}}%
 
\global\long\def\nind{\mathop{\mathpalette\Notind{}}}%
\global\long\def\average#1#2#3{Av_{#3}\left(#1/#2\right)}%
\global\long\def\Ff{\mathfrak{F}}%
\global\long\def\mx#1{Mx_{#1}}%
\global\long\def\maps{\mathfrak{L}}%

\global\long\def\Esat{E_{\mbox{sat}}}%
\global\long\def\Ebnf{E_{\mbox{rep}}}%
\global\long\def\Ecom{E_{\mbox{com}}}%
\global\long\def\BtypesA{S_{\Bb}^{x}\left(A\right)}%
\global\long\def\DenseTrees{T_{dt}}%

\global\long\def\init{\trianglelefteq}%
\global\long\def\fini{\trianglerighteq}%
\global\long\def\Bb{\cal B}%
\global\long\def\Lim{\operatorname{Lim}}%
\global\long\def\Succ{\operatorname{Succ}}%

\global\long\def\SquareClass{\cal M}%
\global\long\def\leqstar{\leq_{*}}%
\global\long\def\average#1#2#3{Av_{#3}\left(#1/#2\right)}%
\global\long\def\cut#1{\mathfrak{#1}}%
\global\long\def\NTPT{\text{NTP}_{2}}%
\global\long\def\Zz{\mathbb{Z}}%
\global\long\def\TPT{\text{TP}_{2}}%
\global\long\def\supp{\operatorname{supp}}%

\global\long\def\OurSequence{\mathcal{I}}%
\global\long\def\SUR{SU}%
\global\long\def\ShiftGraph#1#2{Sh_{#2}\left(#1\right)}%
\global\long\def\ShiftGraphHalf#1{\cal G_{#1}^{\frac{1}{2}}}%
\global\long\def\SymShiftGraph#1#2{Sh_{#2}^{sym}\left(#1\right)}%

\title{Automorphism groups of finite topological rank}
\author{Itay Kaplan and Pierre Simon}
\thanks{The first author would like to thank the Israel Science foundation
for their support of this research (Grant no. 1533/14). The second
author was partially supported by NSF (grant DMS 1665491), and the
Sloan foundation.}
\begin{abstract}
We offer a criterion for showing that the automorphism group of an
ultrahomogeneous structure is topologically 2-generated and even has
a cyclically dense conjugacy class. We then show how finite topological
rank of the automorphism group of an $\omega$-categorical structure
can go down to reducts. Together, those results prove that a large
number of $\omega$-categorical structures that appear in the literature
have an automorphism group of finite topological rank. In fact, we
are not aware of any $\omega$-categorical structure to which they
do not apply (assuming the automorphism group has no compact quotients).
We end with a few questions and conjectures.
\end{abstract}

\subjclass[2010]{20B27, 03C15. }
\maketitle

\section{Introduction}

Many automorphism groups of Fra\"iss\'e structures are known to
admit a 2-generated dense subgroup. This is the case for example for
dense linear orders and the random graph \cite{mac_automorphisms,Darji:2010}.
It is however not true that all automorphism groups of say $\omega$-categorical
structures have even a finitely generated dense subgroup. For instance
a construction of Cherlin and Hrushovski yields an $\omega$-categorical
structure whose automorphism groups admits $(\mathbb{Z}/2\mathbb{Z})^{\omega}$
as a quotient, which implies that it cannot have a finitely generated
dense subgroup (see Remark \ref{rem:constraint for cdcc compact quotient}).
However, it seems that the existence of such a large compact quotient
is the only known obstruction. We speculate that this might indeed
be the case and ask: Let $G$ is the automorphism group of an $\omega$-categorical
structure; assume $G$ has no compact quotient, then does it have
a finitely generated dense subgroup? This paper is our attempt at
answering this question. We fall short of providing a definitive answer,
but we succeed in finding sufficient conditions for such a $G$ to
admit a finitely generated dense subgroup which seem to apply to all
known examples. It is even plausible that those conditions are actually
satisfied by all $\omega$-categorical structures with trivial $\acl^{\eq}\left(\emptyset\right)$,
see Conjecture \ref{conj:expansion CIR}.

We now describe our main results. We first define a notion of a \emph{canonical
independence relation}, or CIR. It is a ternary independence relation
$\ind$ which satisfies in particular stationarity over $\emptyset$
and transitivity on both sides. Importantly, we do not assume symmetry.
We show that if an ultrahomogeneous structure $M$ admits such a CIR,
then it has a 2-generated dense subgroup, and even a cyclically dense
conjugacy class (that is, for some $f,g\in G$, the set $\set{f^{-n}gf^{n}}{n\in\Zz}$
is dense). Many, but not all, classical $\omega$-categorical structures
have a CIR. Examples that do not include the dense circular order
(Corollary \ref{cor:Circular order has no CIR but degree <=00003D3})
and the dense infinitely-branching tree (Corollary \ref{cor:Dense trees don't have a groovy ind relation}).
However, an expansion of any of those structures obtained by naming
a point does have a CIR (Example \ref{exa:trees with points}, Remark
\ref{rem:expanding a circular order by a point has a CIR}). This
leads to our second main theorem which completely solves a relative
version of our initial question: we show that if $Aut(M)$ has no
compact quotients and $N$ is an $\omega$-categorical expansion of
$M$, then if $\Aut\left(N\right)$ has a finitely generated dense
subgroup, then so does $\Aut\left(M\right)$. More precisely, we show
that adding two elements from $\Aut\left(M\right)$ to $\Aut\left(N\right)$
yields a dense subgroup of $\Aut\left(M\right)$: see Theorem \ref{thm:If no CQ then finitely generated}.

In Section \ref{sec:A-topological-dynamics}, we give a dynamical
consequence of having a CIR. Our main motivation is to relate it to
the Ramsey property. We observe in Proposition \ref{prop:Ramsey has everything but over base}
that Ramsey structures admit a weaker form of independence relation.
We know by \cite{MR2140630} that the Ramsey property is equivalent
to extreme amenability of the automorphism group. It is then natural
to look for a dynamical interpretation of having a CIR, one goal being
to understand to what extent the Ramsey property is not sufficient
to imply it. We give a necessary condition for having a CIR which
sheds some light on this notion and can be used to prove negative
results.

The theorems presented in this paper lead to a number of open questions.
In particular, it would be interesting to understand the obstructions
to having a CIR and to prove more results about the automorphism groups
of structures with a CIR. It is also our hope that some ideas introduced
here could be used to develop a general theory of $\omega$-categorical
structures. One strategy we have in mind is to show that $\omega$-categorical
structures admit \emph{nice }expansions and then to prove \emph{relative}
statements which pull down properties from an expansion to the structure
itself. See Section \ref{sec:Further-questions} for some precise
conjectures.

We end this introduction by mentioning some previous work done on
this question: The existence of a cyclically dense conjugacy class
was shown for the random graph by Macpherson \cite{mac_automorphisms},
for the Urysohn space by Solecki \cite{Solecki2005}, for dense linear
orders by Darji and Mitchell \cite{Darji:2010} and recently for generic
posets by Glab, Gordinowicz and Strobin \cite{Glab2017}. Kechris
and Rosendal \cite{Kechris-Rosendal} study the property of having
a dense conjugacy class for Polish groups. They show that a number
of Polish groups admit cyclically dense conjugacy classes (see Theorem
2.10). Those include the group of homeomorphisms of the Cantor space
 and the automorphism group of a standard Borel space. Those groups
do not fit in our context, although it should be possible to generalize
our results so as to include them. In fact, the proof of Theorem 2.10
is very much in the same spirit as the proofs in this paper. Kwiatkowska
and Malicki \cite{Kwiatkowska2016} give sufficient conditions for
an automorphism group $G$ to have a cyclically dense conjugacy class,
which gives new examples such as structures with the free amalgamation
property and tournaments. Their conditions do not seem to formally
imply ours, but all the examples that they give (and in particular
structures with free amalgamation) are covered by our theorems. They
also show that under the same hypothesis $L_{0}(G)$ has a cyclically
dense conjugacy class. We did not study this.

\section{Preliminaries}

\subsection{\label{subsec:-cateogiricity,-Homogeneous-stru}$\omega$-categoricity,
Ultrahomogeneous structures, Fra\"iss\'e limits and model companions}

Here we recall the basic facts we need for this paper. 

Let $L$ be some countable first order language (vocabulary).

An $L$-theory is called $\omega$\emph{-categorical} if it has a
unique infinite countable model up to isomorphism. A countable model
$M$ is $\omega$-categorical if its complete theory $Th\left(M\right)$
is. By a theorem of Engeler, Ryll-Nardzewski and Svenonius, see e.g.,
\cite[Theorem 7.3.1]{Hod}, this is equivalent to saying that $G=\Aut\left(M\right)$
is \emph{oligomorphic}: for every $n<\omega$, there are only finitely
many orbits of the action of $G$ on $M^{n}$. It is also equivalent
to the property that every set $X\subseteq M^{n}$ which is invariant
under $G$ is $\emptyset$-definable in $M$.  Also, if $M$ is $\omega$-categorical
and $A$ is a finite subset of $M$ then $M_{A}$ is also $\omega$-categorical,
where $M_{A}$ is the expansion of $M$ for the language $L_{A}$
which adds a constant for every element in $A$. 

A countable $L$-structure $M$ is called \emph{ultrahomogeneous}
if whenever $f:A\to B$ is an isomorphism between two finitely generated
substructures $A,B$ of $M$, there is $\sigma\in\Aut\left(M\right)$
extending $f$. The \emph{age} of an $L$-structure $M$, $\Age\left(M\right)$,
is the class of all finitely generated substructures which can be
embedded into $M$. 

Recall that a class of finitely generated $L$-structures $K$ closed
under isomorphisms has\emph{ }the\emph{ hereditary property} (\emph{HP})
if whenever $A\in K$ and $B\subseteq A$ ($B$ is a substructure
of $A$), $B\in K$. The class $K$ has the \emph{joint embedding
property} (\emph{JEP}) if whenever $A,B\in K$ there is some $C$
such that both $A,B$ embed into $C$. It has the \emph{amalgamation
property} (\emph{AP}) if whenever $A,B,C\in K$ and $f_{B}:A\to B$,
$f_{C}:A\to C$ are embeddings, then there is some $D\in K$ and embeddings
$g_{B}:B\to D$, $g_{C}:C\to D$ such that $g_{B}\circ f_{B}=g_{C}\circ f_{C}$. 

We say that $K$ is\emph{ uniformly locally finite} if for some function
$f:\omega\to\omega$, for every $A\in K$ and $X$ a subset of $A$
of size $n$, the structure generated by $X$ has size $\leq f\left(\left|X\right|\right)$.
In the following, ``essentially countable'' means that $K$ contains
at most countably many isomorphism types of structures. 

We also recall the notions of model companions and model completions.
A theory $T$ is called model complete if whenever $M\subseteq N$
are models of $T$, $M\prec N$. Suppose that $T_{\forall}$ is a
universal theory. A theory $T'$ is the\emph{ model companion} of
$T_{\forall}$ if $T'$ is model complete and $T'_{\forall}=T_{\forall}$
(they have the same universal consequences). In other words, every
model of $T_{\forall}$ can be embedded in a model of $T'$. The theory
$T'$ is a \emph{model completion} of $T_{\forall}$ if in addition
it has elimination of quantifiers. Models companions are unique, if
they exist. For more, see \cite[Section 3.2]{TentZiegler} and \cite[Section 8.3]{Hod}. 
\begin{fact}
\label{fact:Fraisse limits}Suppose that that $K$ is an essentially
countable class of finite $L$-structures which has HP, JEP and AP. 
\begin{enumerate}
\item \cite[Theorem 7.1.2]{Hod} The class $K$ has a Fra\"iss\'e limit:
a \uline{unique} countable ultrahomogeneous model $M$ with the
same age. 
\item \cite[Theorem 7.4.1]{Hod} If $K$ is uniformly locally finite in
a finite language (or without assuming that $L$ is finite but instead
that for each $n<\omega$ there are finitely many isomorphism types
of structures generated by $n$ elements), then $M$ is $\omega$-categorical
and has quantifier elimination. (See remark below.)
\item If $T$ is a countable universal $L$-theory, $L$ is finite, $K$
is the class of finitely generated models of $T$ and $K$ is uniformly
locally finite (or without assuming that $L$ is finite but instead
that $K$ has at most finitely many isomorphism types of structures
generated by $n$ elements for all $n<\omega$), then the theory $Th\left(M\right)$
is $\omega$-categorical and is the model completion of $T$. (See
remark below.)
\item \cite[Theorem 7.1.7]{Hod}The converse to (1) also holds: if $M$
is an ultrahomogeneous then the age of $M$ satisfies HP, JEP and
AP. 
\end{enumerate}
\end{fact}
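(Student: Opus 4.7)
The plan is to treat the four parts in sequence, since they share ingredients. For (1), I would carry out the standard Fra\"iss\'e construction: enumerate (with repetitions) all pairs $(A,B)$ with $A\subseteq B\in K$ and build an ascending chain $M_{0}\subseteq M_{1}\subseteq\cdots$ inside $K$ so that (i) every isomorphism type of $K$ eventually embeds into some $M_{n}$ (using JEP) and (ii) at each stage, any prescribed embedding $A\hookrightarrow M_{n}$ with $A\subseteq B\in K$ is amalgamated with $B$ inside $M_{n+1}$ (using AP). The union $M=\bigcup_{n}M_{n}$ has age $K$ and is ultrahomogeneous by a back-and-forth using AP; the same back-and-forth against a second candidate gives uniqueness.

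For (2), uniform local finiteness yields that any finitely generated $A\in K$ has size bounded by $f(n)$ in terms of the number of generators, and the stated hypothesis (finite language or, directly, finitely many types of $n$-generated members) gives only finitely many isomorphism types. Since $M$ is ultrahomogeneous, two $n$-tuples lie in the same $\Aut(M)$-orbit iff there is an isomorphism of the finite substructures they generate matching them up. Hence $\Aut(M)$ has finitely many orbits on $M^{n}$, and Ryll-Nardzewski delivers $\omega$-categoricity. Quantifier elimination follows for the same reason: the quantifier-free type of a tuple determines the isomorphism type of the substructure it generates, which by ultrahomogeneity pins down the orbit, hence the full type.

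Part (3) reduces to (1) and (2). Because $T$ is universal and $\Age(M)\subseteq\mathrm{Mod}(T)$, the limit $M$ is itself a model of $T$; (2) gives $\omega$-categoricity and quantifier elimination of $\Th(M)$, so $\Th(M)$ is model complete and has the same universal consequences as $T$. What remains for the model completion property is that every model $N\models T$ embeds in some model of $\Th(M)$: exhaust $N$ by an ascending chain of finitely generated substructures $N_{0}\subseteq N_{1}\subseteq\cdots$, all lying in $K$ by hypothesis, and realize this chain step-by-step inside a sufficiently saturated elementary extension $M^{*}\succ M$ using the amalgamation built into $M$.

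Finally (4) is essentially bookkeeping: HP is immediate, since any substructure of a finitely generated substructure of $M$ is again in $\Age(M)$. JEP is obtained by embedding $A,B\in\Age(M)$ into $M$ and taking the substructure they generate together. AP uses ultrahomogeneity: given $f_{B}\colon A\to B$ and $f_{C}\colon A\to C$ in $\Age(M)$, realize $B$ and $C$ as substructures of $M$, then apply an automorphism of $M$ to move the copy of $A$ inside $C$ onto the copy of $A$ inside $B$, and take the substructure generated by the union. The only genuinely delicate step in the entire fact is the amalgamation bookkeeping in (1): one must enumerate embedding problems, not merely structures, and arrange the enumeration so that every such problem arising anywhere in the eventual $M$ gets resolved at some finite stage; everything else is routine once this is set up.
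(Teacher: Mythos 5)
Your treatment of (1), (2) and (4) is correct and is essentially the standard argument that the paper simply cites to Hodges (Theorems 7.1.2, 7.4.1 and 7.1.7); in particular your proof of (2) works verbatim under the parenthetical hypothesis (finitely many isomorphism types of $n$-generated members), which is exactly the modification the paper's remark alludes to, and your closing observation that one must enumerate embedding problems rather than structures is the right delicate point in (1). Where you genuinely diverge from the paper is (3), the only part the paper proves itself. The paper's argument is purely syntactic and shorter: by (2), $\Th\left(M\right)$ eliminates quantifiers, hence is model complete, so it suffices to check $\Th\left(M\right)_{\forall}=T$; the inclusion of $T$ in $\Th\left(M\right)$ holds since $\Age\left(M\right)\subseteq K$, and conversely if $M\models\psi$ with $\psi$ universal but $T\not\models\psi$, then $\neg\psi$, being existential, already holds in some finitely generated model $A\models T$, and $A\in K=\Age\left(M\right)$ embeds into $M$ \textemdash{} a contradiction. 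You instead prove the equivalent semantic statement that every $N\models T$ embeds into a model of $\Th\left(M\right)$.

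That route is legitimate, but as written it has a small flaw: an arbitrary (uncountable) model of $T$ is not the union of an ascending \emph{chain} of finitely generated substructures, only of a directed family, so your step-by-step realization inside $M^{*}\succ M$ does not literally apply. The repair is easy: either note that equality of universal theories (hence the model companion/completion property) can be checked on finitely generated, or at worst countable, models by L\"owenheim\textendash Skolem, or replace the chain by compactness \textemdash{} $\Diag\left(N\right)\cup\Th\left(M\right)$ is finitely satisfiable because every finitely generated substructure of $N$ lies in $K=\Age\left(M\right)$. Also, in the extension step $N_{n}\to M^{*}$ to $N_{n+1}$, what you actually use is quantifier elimination of $\Th\left(M\right)$ (so that the partial isomorphism between the two copies of $N_{n}$ in $M^{*}$ is elementary) together with saturation of $M^{*}$, rather than ``amalgamation built into $M$'' per se; it is worth making that explicit, and note that with either repair your argument, unlike the paper's two-line one, costs you a saturated extension but yields the embedding property for all models of $T$ directly.
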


\begin{rem}
The assumptions in parenthesis in (2) is not stated in \cite[Theorem 7.4.1]{Hod},
but a straightforward modification of that proof gives this. 

We could not find an explicit reference for (3) (which is well-known),
so here is a short argument. Since $Th\left(M\right)$ eliminates
quantifiers by (2), it is enough to show that $Th\left(M\right)_{\forall}=T_{\forall}=T$
(since $T$ is universal). 

If $\psi$ is universal and $T\models\psi$, then since $\Age\left(M\right)\subseteq K$,
$M\models\psi$. On the other hand, if $M\models\psi$ where $\psi$
is universal, and $T\not\models\psi$, then there is a model $A'\models T$
such that $A'\models\neg\psi$, and since $\neg\psi$ is existential,
the same is true for some finitely generated model $A\subseteq A'$,
so $A\in K$ and since $A\in\Age\left(M\right)$, we get a contradiction. 
\end{rem}

Classes $K$ as in Fact \ref{fact:Fraisse limits} are called Fra\"iss\'e
classes or amalgamation classes.

Some examples of Fra\"iss\'e limits include DLO (dense linear order),
i.e., $Th\left(\Qq,<\right)$ (here we identify the Fra\"iss\'e
limit and its theory), the random graph, the random poset (partially
ordered set), the random tournament, and more. One example that we
will be interested in is that of dense trees. 
\begin{example}
\label{exa:dense trees}Let $L_{dt}=\left\{ <,\wedge\right\} $, and
let $T_{dt,\forall}$ be the universal theory of trees with a meet
function $\wedge$. Then $T_{dt,\forall}$ has an $\omega$-categorical
model completion by Fact \ref{fact:Fraisse limits} (note that the
tree generated by a finite set $B$ is just $B\cup\set{x\wedge y}{x,y\in B}$).
We denote the model companion by $\DenseTrees$ and call the unique
countable model the \emph{dense tree}. See also \cite[Section 2.3.1]{simon2015guide}. 
\end{example}

Recall that a structure $M$ is \emph{homogeneous} if whenever $a,b$
are finite tuples of the same length, and $a\equiv b$ (which means
$\tp\left(a/\emptyset\right)=\tp\left(b/\emptyset\right)$, i.e.,
the tuples $a,b$ have the same type), then there is an automorphism
taking $a$ to $b$. Note that ultrahomogeneous structures are homogeneous
and the same is true for $\omega$-categorical ones. When $M$ is
homogeneous, an \emph{elementary map} $f:A\to B$ for $A,B\subseteq M$
is just a restriction of an automorphism of $M$. 

Finally, we use $\C$ to represent a monster model of the appropriate
theory. This is a big saturated (so also homogeneous) model that contains
all the models and sets we will need. This is standard in model theory.
For more, see \cite[Section 6.1]{TentZiegler}. 

\subsection{\label{subsec:A-mix-of}A mix of two Fra\"is\'e limits}

Suppose that $K_{1},K_{2}$ are two amalgamation classes of finite
structures in the languages $L_{1},L_{2}$ respectively. Assume the
following properties:
\begin{enumerate}
\item The symmetric difference $L_{1}\mathrela{\triangle}L_{2}$ is relational. 
\item The class $K$ of finite $L_{1}\cup L_{2}$-structures $A$ such that
$A\restriction L_{1}\in K_{1}$ and $A\restriction L_{2}\in K_{2}$
is an amalgamation class. Let $M$ be its Fra\"is\'e limit.
\item For every $A\in K_{1}$, there is some expansion $A'$ of $A$ to
an $L_{1}\cup L_{2}$-structure such that $A'\restriction L_{2}\in K_{2}$,
and similarly that for every $B\in K_{2}$ there is some expansion
$B'$ to $L_{1}\cup L_{2}$ whose restriction to $L_{1}$ is in $K_{1}$. 
\item If $A\in K$ and $A\restriction L_{1}\subseteq B\in K_{1}$ then there
is an expansion $B'$ of $B$ to $L_{1}\cup L_{2}$ such that $A\subseteq B'$
and $B'\restriction L_{2}\in K_{2}$, and similarly for $L_{2}$. 
\end{enumerate}
Under all these conditions we have the following.
\begin{prop}
\label{prop:restriction of a mix of two Faisse} The structure $M\restriction L_{1}=M_{1}$
is the Fra\"is\'e limit of $K_{1}$ and $M\restriction L_{2}=M_{2}$
is the Fra\"is\'e limit of $K_{2}$.
\end{prop}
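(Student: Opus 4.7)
The plan is to verify the three defining properties of a Fra\"iss\'e limit for $M_{1}=M\restriction L_{1}$: countability, $\Age(M_{1})=K_{1}$, and ultrahomogeneity. The uniqueness clause of Fact~\ref{fact:Fraisse limits}(1) then identifies $M_{1}$ with the Fra\"iss\'e limit of $K_{1}$, and the argument for $M_{2}$ is entirely symmetric. Countability is immediate. A useful preliminary observation is that since $L_{1}\mathrela{\triangle}L_{2}$ is relational, all function and constant symbols of $L_{1}\cup L_{2}$ lie in $L_{1}\cap L_{2}$; consequently finitely generated substructures coincide across $L_{1}$, $L_{2}$, and $L_{1}\cup L_{2}$, and every finite $L_{1}$-substructure $A$ of $M_{1}$ carries a canonical $L_{1}\cup L_{2}$-expansion $A^{+}$ inherited from $M$. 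This $A^{+}$ is a finite substructure of $M$, so $A^{+}\in\Age(M)=K$, whence $A=A^{+}\restriction L_{1}\in K_{1}$; conversely, any $A\in K_{1}$ admits an expansion $A^{+}\in K$ by condition (3), which embeds in $M$ since $\Age(M)=K$, and $A$ thereby embeds in $M_{1}$. Hence $\Age(M_{1})=K_{1}$.

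Ultrahomogeneity I would establish by back-and-forth; it suffices to prove the following extension step, the dual following by applying it to $f^{-1}$. Given an $L_{1}$-isomorphism $f:A\to B$ between finite $L_{1}$-substructures of $M_{1}$ and any $a\in M_{1}$, I find $b\in M_{1}$ and an $L_{1}$-isomorphism $\tilde f\supseteq f$ with $\tilde f(a)=b$. Let $A'\subseteq M$ be the finite substructure generated by $A\cup\{a\}$, and let $A'^{+}$ and $B^{+}$ denote the full $L_{1}\cup L_{2}$-structures $A'$ and $B$ inherit from $M$; both lie in $K$. First, extend $f$ abstractly to an $L_{1}$-isomorphism $\hat f:A'\to C$, where $C$ is an $L_{1}$-structure on $B$ together with a disjoint copy of $A'\setminus A$, chosen so that $\hat f\restriction A=f$ and $\hat f$ is an $L_{1}$-isomorphism. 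Then $C\cong A'$ in $L_{1}$, so $C\in K_{1}$; by construction $B$ sits in $C$ as an $L_{1}$-substructure carrying exactly its $L_{1}$-structure from $M$, that is $B^{+}\restriction L_{1}=B\subseteq C$. Condition (4), applied with $B^{+}\in K$ and $C\in K_{1}$, supplies an expansion $\tilde C\in K$ of $C$ containing $B^{+}$ as an $L_{1}\cup L_{2}$-substructure. Since $\tilde C\in K=\Age(M)$, embed $\tilde C$ into $M$, and compose with an automorphism of $M$ (using ultrahomogeneity of $M$) so that this embedding is the identity on $B^{+}$. Setting $b$ to be the image of $\hat f(a)$, the composition $A'\to M$ is an $L_{1}$-isomorphism extending $f$ and mapping $a$ to $b$.

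The main obstacle is precisely this ultrahomogeneity step: one cannot apply ultrahomogeneity of $M$ directly to $f$, because although $f$ is an $L_{1}$-isomorphism between $A$ and $B$, it need not preserve the $L_{2}$-structure those sets inherit from $M$, so $f:A^{+}\to B^{+}$ is typically not an $L_{1}\cup L_{2}$-isomorphism. Condition (4) is exactly what is needed to refit the $L_{2}$-structure on $C$ so that it agrees on $B$ with that of $B^{+}$, after which the Fra\"iss\'e property of $K$ and ultrahomogeneity of $M$ together yield the desired extension.
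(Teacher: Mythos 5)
Your proof is correct and takes essentially the same approach as the paper: compute $\Age(M_{1})=K_{1}$ from conditions (2)--(3) using that $L_{1}\mathbin{\triangle}L_{2}$ is relational, transport structure along the partial map, apply condition (4) to obtain a common expansion lying in $K$, and realize it inside $M$ via the Fra\"iss\'e property of $M$. The only difference is presentational: you carry out the back-and-forth explicitly (pushing the structure of $\left\langle Aa\right\rangle $ onto an abstract $L_{1}$-extension of $B$ and then embedding over $B$), whereas the paper invokes Hodges' Lemma 7.1.4 both to reduce ultrahomogeneity of $M_{1}$ to a one-point extension property and to perform the extension into $M$.
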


\begin{proof}
Start with $M_{1}$ (for $M_{2}$, the proof is the same). It is enough
to show that $M_{1}$ is ultrahomogeneous and that $\Age\left(M_{1}\right)=K_{1}$.
The second statement follows from (2), (3) above. For the first, by
\cite[Lemma 7.1.4]{Hod} it is enough to show that if $A\subseteq B$
are from $K_{1}$ and $f:A\to M_{1}$ then there is some $g:B\to M_{1}$
extending $f$. Using $f$ we can expand $A$ to an $L_{1}\cup L_{2}$-structure
$A'$ in such a way that $f$ is an embedding to $M$ (this uses the
fact that $L_{2}\backslash L_{1}$ is relational). By (4) we can expand
$B$ to an $L_{1}\cup L_{2}$-structure $B'$ such that $A'\subseteq B'$
and $B'\in K$. Since $M$ is the Fra\"iss\'e limit of $K$, it
follows by \cite[Lemma 7.1.4]{Hod} again that $f$ can be extended
to $g:B'\to M$, and in particular, $g\restriction L_{1}$ is the
embedding we seek. 
\end{proof}

\subsection{\label{subsec:The-maximal-compact}The maximal compact quotient of
the automorphism group of a countable $\omega$-categorical structure}

Assume in this section that $M$ is $\omega$-categorical and countable,
and let $G=\Aut\left(M\right)$, considered as a topological group
in the product topology. The contents of this section are folklore
but we give the details for the sake of readability. Recall that for
a structure $M$ and $A\subseteq M$, $\acl\left(A\right)$ is the
set of all algebraic elements over $A$ (elements satisfying an algebraic
formula over $A$: one with finitely many solutions). Similarly, $\dcl\left(A\right)$
is the set of all elements definable over $A$. In the context of
$\omega$-categorical structures, $\acl\left(A\right)$ and $\dcl\left(A\right)$
are defined in terms of the size of the orbit of the action of $G$
fixing $A$ being finite or a singleton respectively. In the next
proposition, we describe the maximal compact quotient of $\Aut\left(M\right)$
in model theoretic terms. This uses the notion of $M^{\eq}$: the
expansion of $M$ obtained by adding a new sort for every $\emptyset$-definable
quotient of some $\emptyset$-definable set. See \cite[Section 8.4]{TentZiegler}
for more. 

For $A\subseteq M$, $\Aut\left(M/A\right)$ is the group of automorphisms
of $M$ fixing $A$. This is a closed subgroup of $G$ which is normal
when $A$ is invariant under $\Aut\left(M\right)$, thus the quotient
$G/\Aut\left(M/A\right)$ is a Hausdorff topological group (with the
quotient topology). We identify $\Aut\left(M\right)$ and $\Aut\left(M^{\eq}\right)$
so that we can put $A=\acl^{\eq}\left(\emptyset\right)$. In this
case, it is also compact as the next proposition says. 
\begin{prop}
\label{prop:Fixing acl(0) is compact quotient in omega-cat} The group
$G/\Aut\left(M/\acl^{\eq}\left(\emptyset\right)\right)$ is a compact
Hausdorff (in fact \textemdash{} profinite) group. 
\end{prop}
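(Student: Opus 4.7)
The plan is to realize $G/N$, where $N:=\Aut\left(M/\acl^{\eq}\left(\emptyset\right)\right)$, as a closed subgroup of a concrete profinite group built from the action of $G$ on the finite orbits inside $\acl^{\eq}\left(\emptyset\right)$. The set $\acl^{\eq}\left(\emptyset\right)$ is countable (by $\omega$-categoricity, $M^{\eq}$ has countably many sorts, each of size $\le\aleph_{0}$) and decomposes as a disjoint union of finite $G$-orbits $\left\{O_{i}\right\}_{i\in I}$. Let $P:=\prod_{i\in I}\operatorname{Sym}\left(O_{i}\right)$ with the product topology of finite discrete groups, so that $P$ is profinite. The restriction-of-action map $\rho\colon G\to P$ is a group homomorphism with kernel exactly $N$, and is continuous: the preimage of a basic open subgroup of $P$ is the pointwise stabilizer in $G$ of finitely many elements of $\acl^{\eq}\left(\emptyset\right)$, and each such element is fixed by an open (finite-index) subgroup of $G$.

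The heart of the proof is to show that $\rho\left(G\right)$ is closed in $P$. Let $\left(\sigma_{i}\right)_{i\in I}$ lie in the closure of $\rho\left(G\right)$, and define $\sigma^{*}\colon\acl^{\eq}\left(\emptyset\right)\to\acl^{\eq}\left(\emptyset\right)$ by $\sigma^{*}\restriction O_{i}=\sigma_{i}$. By the closure condition, for every finite $F\subseteq I$ there is some $g_{F}\in G$ with $\rho\left(g_{F}\right)\restriction F=\left(\sigma_{i}\right)_{i\in F}$; hence every finite restriction of $\sigma^{*}$ equals the restriction of an automorphism of $M$, and so $\sigma^{*}$ is elementary. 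I would then extend $\sigma^{*}$ to some $\tilde{\sigma}\in\Aut\left(M\right)$ by back-and-forth on a countable enumeration of $M$, using $\omega$-saturation of $M$. This yields $\rho\left(\tilde{\sigma}\right)=\left(\sigma_{i}\right)_{i\in I}$, so $\rho\left(G\right)$ is closed in $P$.

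Granting closedness, $\rho\left(G\right)$ is a closed subgroup of the profinite group $P$, and is therefore itself profinite. The induced map $\bar{\rho}\colon G/N\to\rho\left(G\right)$ is a continuous bijective homomorphism between Polish groups (both $G/N$ and $\rho\left(G\right)$ are Polish, the former as a quotient of a Polish group by a closed normal subgroup, the latter as a closed subgroup of a compact metrizable group), hence a topological isomorphism by the open mapping theorem for Polish groups. Thus $G/N\cong\rho\left(G\right)$ is profinite, and in particular compact Hausdorff.

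The main obstacle is the back-and-forth step. The standard back-and-forth in a countable $\omega$-categorical structure extends a partial elementary map with \emph{finite} domain, using $\omega$-saturation to realize types over finite sets; here the base $\acl^{\eq}\left(\emptyset\right)$ may be infinite, so at first sight one would need to realize types over infinite sets. The extension is made possible by the special nature of $\acl^{\eq}\left(\emptyset\right)$: in the $\omega$-categorical setting, the type of a finite tuple from $M$ over $\acl^{\eq}\left(\emptyset\right)\cup F$ (for $F\subseteq M$ finite) is determined by finitely much data concerning how the tuple refines finitely many $G$-orbits, so at each step of the construction one is in effect realizing a type over a finite set, which is handled by $\omega$-saturation.
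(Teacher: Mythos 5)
Your overall architecture is sound and is essentially the paper's proof in different packaging. The paper restricts automorphisms to $\acl^{\eq}\left(\emptyset\right)$, shows the restriction map is onto the group $H$ of elementary self-maps of $\acl^{\eq}\left(\emptyset\right)$ (profinite as an inverse limit of the finite groups of elementary permutations of finite $\emptyset$-definable sets), and then verifies by hand that the induced bijection $G/N\to H$ is open; you instead embed into $P=\prod_{i}\mathrm{Sym}\left(O_{i}\right)$, argue that the image is closed, and invoke the open mapping theorem for Polish groups. Your topological bookkeeping is fine (continuity of $\rho$, kernel equal to $N$, the quotient of a Polish group by a closed normal subgroup being Polish, continuous bijective homomorphisms of Polish groups being open), and replacing the paper's explicit openness computation by the open mapping theorem is a legitimate shortcut.

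The gap is in the step you yourself flag as the main obstacle. Both the closedness of $\rho\left(G\right)$ in your version and the surjectivity of the restriction map in the paper's version reduce to one nontrivial model-theoretic fact: every complete type in finitely many home-sort variables over $\acl^{\eq}\left(\emptyset\right)\cup F$, with $F\subseteq M$ finite, is realized in $M$ \textemdash{} equivalently, such a type is isolated by its restriction to $F\cup X$ for some finite $\emptyset$-definable $X\subseteq M^{\eq}$. Your justification, that such a type ``is determined by finitely much data concerning how the tuple refines finitely many $G$-orbits'' and is therefore ``handled by $\omega$-saturation'', merely restates this fact without proving it; $\omega$-saturation only realizes types over finite parameter sets, so as written the extension step begs the question. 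What is needed (and what the paper supplies) is an argument such as: in the monster model, the relation $E$ of having the same type over $F\cup\acl^{\eq}\left(\emptyset\right)$ is $F$-invariant and has boundedly many classes, hence is $F$-definable by $\omega$-categoricity and has finitely many classes by compactness, so every $E$-class \textemdash{} and therefore every such type \textemdash{} is realized in $M$, and moreover each such type is isolated by its restriction to $F\cup X$ with $X$ finite and $\emptyset$-definable. With this lemma inserted, your back-and-forth (carried out so that the finite partial map together with $\sigma^{*}$ stays elementary, which is also what guarantees that the resulting $\tilde{\sigma}$ really induces $\sigma^{*}$ on $\acl^{\eq}\left(\emptyset\right)$) goes through, and the rest of your proof is correct.
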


\begin{proof}
Let $H$ be the group of all elementary maps from $\acl^{\eq}\left(\emptyset\right)$
to $\acl^{\eq}\left(\emptyset\right)$, also denoted by $\Aut\left(\acl^{\eq}\left(\emptyset\right)\right)$.
The group $H$ is naturally profinite as an inverse system of the
family 
\[
\set{H_{X}}{X\subseteq M^{\eq},\text{ a finite }\emptyset\text{-definable set}},
\]
 where $H_{X}$ is the group of elementary permutations of $X$. 

Let $\res:G\to H$ be the restriction map $\res\left(\sigma\right)=\sigma\restriction\acl^{\eq}\left(\emptyset\right)$.
We will show that $\res$ is onto. Using back-and-forth, it is enough
to show that given any complete type $p\left(x\right)$ for $x$ in
the home sort over $A\cup\acl^{\eq}\left(\emptyset\right)$ where
$A\subseteq M$ is finite, $p$ can be realized in $M$.  We work
in the monster model $\C$. Let $E=\mathordi{\equiv_{A\cup\acl^{\eq}\left(\emptyset\right)}}$
be the equivalence relation of having the same type over $A\cup\acl^{\eq}\left(\emptyset\right)$.
Then $E$ is $A$-invariant and has boundedly many classes in $\C$.
By $\omega$-categoricity, as $A$ is finite $E$ is definable over
$A$, and by compactness, $E$ has finitely many classes. But then
for every $E$-class there must be a representative in $M$. Since
a realization of $p$ must have an $E$-equivalent element in $M$,
$p$ is realized in $M$. Note that by compactness we get that every
such $p$ is isolated by its restriction to $A\cup X$ where $X$
is some finite $\emptyset$-definable set in $M^{\eq}$.  

The kernel of $\res$ is precisely $G^{0}=\Aut\left(M/\acl^{\eq}\left(\emptyset\right)\right)$,
so $\res$ induces an isomorphism of groups $G/G^{0}\to H$. The group
$G/G^{0}$ is also a topological group when equipped with the quotient
topology. This map is easily seen to be continuous. To see that it
is open, it is enough to show that the image of an open neighborhood
$V$ of $\id\cdot G^{0}$ in $G/G^{0}$ contains an open neighborhood
of $\id$ in $H$. The preimage of $V$ in $G$ is some open set
$U\subseteq G$ containing $\id$. Suppose $\id\in U_{b}=\set{\sigma\in G}{\sigma\left(b\right)=b}\subseteq U$
is some basic open set. As we noted above, there is some finite $\emptyset$-definable
set $X\subseteq M^{\eq}$ such that $\tp\left(b/\acl^{\eq}\left(\emptyset\right)\right)$
is isolated by $\tp\left(b/X\right)$. Then if $\tau\in G$ is such
that $\tau\restriction X=\id_{X}$, then there is some $\sigma\in\Aut\left(M/\acl^{\eq}\left(\emptyset\right)\right)$
such that $\sigma\tau\left(b\right)=b$, so $\sigma\tau\in U_{b}$,
but then  $\tau\in U$ (because $U$ is a union of cosets of $\Aut\left(M/\acl^{\eq}\left(\emptyset\right)\right)$).
Hence, the image of $V$ contains the open set $\set{\tau\in H}{\tau\restriction X=\id_{X}}$. 

Together these two groups are isomorphic as topological groups, so
are profinite. 
\end{proof}
\begin{defn}
We let $G^{0}=\Aut\left(M/\acl^{\eq}\left(\emptyset\right)\right)$. 
\end{defn}

\begin{prop}
\label{prop:Contains every compact}If $H\trianglelefteq G$ is normal
and closed and $G/H$ is compact then $G^{0}\leq H$. 
\end{prop}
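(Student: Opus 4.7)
The plan is to establish that closedness of $H$ together with compactness of $G/H$ forces $H$ to have only finitely many orbits on each $G$-orbit in $M^{n}$. Granted this, each $H$-orbit on $M^{n}$ is a class of a $\emptyset$-definable equivalence relation with finite quotient on each $G$-orbit, hence names an element of $\acl^{\eq}(\emptyset)$. Any $\sigma\in G^{0}$ then preserves every $H$-orbit on tuples setwise, and closedness of $H$ will force $\sigma\in H$.

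For the orbit count, I would fix $\bar a\in M^{n}$ and set $U=\Aut(M/\bar a)$, an open subgroup of $G$. The $H$-orbits inside the $G$-orbit of $\bar a$ correspond bijectively to the double cosets $H\backslash G/U$. Since $H$ is normal, $G/H$ is a topological group on which $U$ acts continuously by left translation; each $U$-orbit equals $UgH$, which is open (because $U$ is open) and hence also closed (its complement is a union of other such orbits). Compactness of $G/H$ then leaves only finitely many $U$-orbits, and since $G$ itself is oligomorphic this yields that $H$ has only finitely many orbits on $M^{n}$ for every $n$.

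Next I would let $E_{n}$ be the relation on $M^{n}$ defined by $E_{n}(\bar x,\bar y)$ iff there exists $h\in H$ with $h(\bar x)=\bar y$. Normality of $H$ makes $E_{n}$ $G$-invariant (if $h(\bar x)=\bar y$ and $g\in G$, then $ghg^{-1}\in H$ witnesses $E_{n}(g\bar x,g\bar y)$), so by $\omega$-categoricity $E_{n}$ is $\emptyset$-definable. The finiteness of the $G$-orbit of each class $[\bar a]_{E_{n}}$ in $M^{n}/E_{n}$ then places $[\bar a]_{E_{n}}$ in $\acl^{\eq}(\emptyset)$. Hence any $\sigma\in G^{0}=\Aut(M/\acl^{\eq}(\emptyset))$ fixes every such class, so $E_{n}(\bar a,\sigma(\bar a))$ holds for every $\bar a\in M^{n}$; equivalently, for every finite tuple $\bar a$ there is $h\in H$ with $h(\bar a)=\sigma(\bar a)$. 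This says that every basic open neighborhood $\sigma\cdot\Aut(M/A)$ of $\sigma$ meets $H$, so $\sigma$ lies in the closure of $H$, which is $H$.

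The main obstacle is the orbit-counting step: translating the purely topological hypothesis ``$G/H$ is compact'' into the model-theoretic statement ``$H$ is oligomorphic''. Everything else is a routine application of the identification of elements of $\acl^{\eq}(\emptyset)$ with finite $G$-orbits in quotients by $\emptyset$-definable equivalence relations, together with the fact that a closed subgroup of $\Aut(M)$ is determined by its orbits on finite tuples.
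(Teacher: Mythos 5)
Your proof is correct and follows essentially the same route as the paper: both arguments turn compactness of $G/H$ into finiteness of the number of $H$-orbits inside each $G$-orbit (your clopen double-coset/$U$-orbit count is just a rephrasing of the paper's observation that the stabilizer of an $H$-orbit is open in the compact group $G/H$, hence of finite index), then define the same $\emptyset$-definable finite equivalence relation $E_n$, place its classes in $\acl^{\eq}(\emptyset)$, and conclude $\sigma\in\cl(H)=H$ using that $H$ is closed. No gaps.
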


\begin{proof}
 Let $E_{n}$ be the equivalence relation on $M^{n}$ of having the
same $H$-orbit. Then $E_{n}$ refines $\equiv$ (having the same
type over $\emptyset$) and is definable in $M$. Indeed, suppose
that $\sigma\in G$. Then for every $a\in M^{n}$, $\sigma O_{H}\left(a\right)=O_{H}\left(\sigma\left(a\right)\right)$
(because $H$ is normal), where $O_{H}\left(a\right)$ denotes the
orbit of $a$ under $H$. Hence $E_{n}$ is invariant under $G$ so
$\emptyset$-definable. 

In addition, $G$ acts transitively on the $H$-orbits within each
$\equiv$-class by $\sigma\cdot O_{H}\left(a\right)=O_{H}\left(\sigma\left(a\right)\right)$.
The stabilizer of $O_{H}\left(a\right)$ is $\set{\sigma\in G}{\sigma\left(O_{H}\left(a\right)\right)=O_{H}\left(a\right)}$
so open (if $\sigma$ is there, then to make sure that $\sigma'$
is there, it is enough that $\sigma'\left(a\right)=\sigma\left(a\right)$).
Note that this action factors through $H$ (i.e., the action $\sigma H\cdot O_{H}\left(a\right)=O_{H}\left(\sigma\left(a\right)\right)$
is well-defined). Hence, the stabilizer is open in $G/H$ and hence
has finite index in $G/H$, so the number of orbits of $H$ under
this action in every $\equiv$-class is finite. By $\omega$-categoricity,
the number of $\equiv$-classes (of $n$-tuples) is finite, so the
number of orbits of $H$ is finite. 

In summary, $E_{n}$ is definable and has finitely many classes. Hence
these classes belong to $\acl^{\eq}\left(\emptyset\right)$. Given
$\sigma\in G^{0}$, $\sigma$ fixes the orbits of $H$ under its action
on $M^{n}$. As $H$ is closed, this means that $\sigma\in H$.  
\end{proof}
\begin{cor}
\label{cor:G0 the maximal compact quotient}The group $G/G^{0}$ is
the maximal compact Hausdorff quotient of $G$. 
\end{cor}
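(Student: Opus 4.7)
The plan is to combine Propositions~\ref{prop:Fixing acl(0) is compact quotient in omega-cat} and~\ref{prop:Contains every compact}, which together already contain all the content of the corollary. By Proposition~\ref{prop:Fixing acl(0) is compact quotient in omega-cat}, $G/G^{0}$ is compact Hausdorff (in fact profinite), so $G/G^{0}$ is at least one compact Hausdorff quotient of $G$. It remains to show it is the largest one.

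For maximality, I would take an arbitrary closed normal subgroup $H\trianglelefteq G$ such that $G/H$ is compact Hausdorff. Proposition~\ref{prop:Contains every compact} then yields $G^{0}\leq H$. By the universal property of topological quotient groups, the inclusion $G^{0}\leq H$ induces a canonical continuous surjective homomorphism $G/G^{0}\twoheadrightarrow G/H$ which makes the natural projection $G\to G/H$ factor through $G\to G/G^{0}$. This shows that every compact Hausdorff quotient of $G$ is itself a quotient of $G/G^{0}$, giving the desired maximality.

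The only real subtlety is just fixing the meaning of ``quotient.'' If one interprets this strictly as a group of the form $G/H$ with $H$ closed normal, equipped with the quotient topology, then the above argument is complete. If instead one wants the formally stronger universal statement that \emph{every} continuous homomorphism $\pi\colon G\to K$ with $K$ compact Hausdorff factors through $G\to G/G^{0}$, one can replace $K$ by the compact Hausdorff subgroup $\overline{\pi(G)}$ and use that $G=\Aut(M)$ is Polish, so that any continuous surjective homomorphism onto a Polish (in particular compact Hausdorff) group is automatically open and hence a topological quotient, reducing to the case handled above. I do not expect any real obstacle here; the work has all been done in the two preceding propositions, and this corollary is essentially their packaging.
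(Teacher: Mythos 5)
Your argument is exactly the paper's: the corollary is stated without proof precisely because it is the immediate combination of Proposition~\ref{prop:Fixing acl(0) is compact quotient in omega-cat} (compactness, indeed profiniteness, of $G/G^{0}$) and Proposition~\ref{prop:Contains every compact} (any closed normal $H$ with $G/H$ compact contains $G^{0}$, so $G/H$ is a further quotient of $G/G^{0}$). Your closing remark about arbitrary continuous homomorphisms into compact Hausdorff groups is not needed for the statement as the paper intends it, so the small wrinkle there (the closure of the image of a Polish group in a compact Hausdorff group need not be metrizable, so the open mapping theorem for Polish groups does not apply verbatim) does not affect the correctness of your proof of the corollary itself.
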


In light of Corollary \ref{cor:G0 the maximal compact quotient},
$G$ \emph{has no compact quotients} (by which we mean that there
is no nontrivial compact Hausdorff group which is an image of $G$
under a continuous group homomorphism) iff $G^{0}=G$. If $N$ is
a normal closed subgroup of $G$, then we would like to say that $\left(G,N\right)$
has no compact quotients iff $G/N$ has no compact quotients. Let
us generalize this to any closed subgroup. 
\begin{defn}
\label{def:a pair having a CQ}Suppose that $H\leq G$ is closed.
We will say that the pair $\left(G,H\right)$ has \emph{no compact
quotients} if for all $g\in G$, there is some $h\in H$ such that
$g\restriction\acl^{\eq}\left(\emptyset\right)=h\restriction\acl^{\eq}\left(\emptyset\right)$. 
\end{defn}

\begin{prop}
\label{prop:no compact quotients}Suppose that $H\leq G$ is closed.
Then $\left(G,H\right)$ has no compact quotients iff for every closed
and normal $N\trianglelefteq G$ such that $G/N$ is compact, $NH=G$. 
\end{prop}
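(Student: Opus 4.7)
The plan is to reformulate both sides of the equivalence in terms of the distinguished subgroup $G^{0}$, and then combine Propositions \ref{prop:Fixing acl(0) is compact quotient in omega-cat} and \ref{prop:Contains every compact}. The first observation is that the definition of $(G,H)$ having no compact quotients is literally a reformulation of the equation $HG^{0}=G$: the condition $g\restriction\acl^{\eq}(\emptyset)=h\restriction\acl^{\eq}(\emptyset)$ is the same as $h^{-1}g\in G^{0}$, i.e. $g\in hG^{0}$. Since $G^{0}$ is normal, $HG^{0}=G^{0}H$ as sets, so this is also the same as $G^{0}H=G$.

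For the forward direction, I would assume $HG^{0}=G$ and take any closed normal $N\trianglelefteq G$ with $G/N$ compact. By Proposition \ref{prop:Contains every compact}, $G^{0}\leq N$, so $G=HG^{0}\subseteq HN$. Because $N$ is normal, $NH=HN$ is a subgroup of $G$ (this is where normality, rather than just closedness, matters), so $NH=G$, as required.

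For the converse, I would simply apply the hypothesis to $N=G^{0}$ itself: by Proposition \ref{prop:Fixing acl(0) is compact quotient in omega-cat}, $G^{0}$ is closed, normal, and $G/G^{0}$ is compact (in fact profinite), so the hypothesis yields $G^{0}H=G$, which by the first paragraph means $(G,H)$ has no compact quotients.

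There is no real obstacle here: the content of the proposition is entirely packaged into the two preceding propositions, and the argument is only a matter of bookkeeping, the mildest subtlety being to note that closedness alone is not enough to make $NH$ a subgroup, but normality of $N$ is.
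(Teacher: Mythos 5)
Your proof is correct and follows essentially the same route as the paper: both reformulate the no-compact-quotients condition for $(G,H)$ as $G^{0}H=G$, then get left-to-right from Proposition \ref{prop:Contains every compact} and right-to-left by specializing to $N=G^{0}$ via Proposition \ref{prop:Fixing acl(0) is compact quotient in omega-cat}. (Your aside about $NH$ being a subgroup is harmless but not needed, since only the set equality $NH=G$ is asserted.)
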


\begin{proof}
First note that $\left(G,H\right)$ has no compact quotient iff $\set{gH}{g\in G}=\set{gH}{g\in G^{0}}$.
This happens iff $G=G^{0}H$. Hence the direction from right to left
follows by taking $N=G^{0}$. The direction from left to right is
immediate by Proposition \ref{prop:Contains every compact}. 
\end{proof}
\begin{cor}
If $H\leq G$ is closed and normal then $\left(G,H\right)$ has no
compact quotients iff $G/H$ has no compact quotients as a topological
group (i.e., there is no nontrivial compact Hausdorff quotient).
\end{cor}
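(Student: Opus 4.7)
The plan is to verify the two directions separately. For the implication $(G,H)$ no CQ $\Rightarrow$ $G/H$ no CQ, the argument is direct: given any continuous surjective homomorphism $\varphi\colon G/H \twoheadrightarrow K$ onto a compact Hausdorff group, the composition $\varphi \circ \pi\colon G \to K$ (where $\pi$ denotes the quotient $G \twoheadrightarrow G/H$) is a continuous homomorphism with kernel $N$, which is a closed normal subgroup of $G$ containing $H$ and satisfies $G/N \cong K$ compact. Applying Proposition \ref{prop:no compact quotients} to the hypothesis on $(G,H)$ yields $NH = G$; since $H \subseteq N$ forces $NH = N$, we conclude $N = G$ and $K$ is trivial.

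For the converse, assume $G/H$ has no nontrivial compact Hausdorff quotient. To show $(G,H)$ has no compact quotients, by Proposition \ref{prop:no compact quotients} it suffices to prove $NH = G$ for every closed normal $N \trianglelefteq G$ with $G/N$ compact. Given such $N$, consider the closure $\overline{NH}$: this is a closed normal subgroup of $G$ (since $N$ and $H$ are both normal) containing both $N$ and $H$. Thus $G/\overline{NH}$ is a Hausdorff topological group, is a continuous image of the compact group $G/N$, hence is compact, and is a quotient of $G/H$ (since $\overline{NH} \supseteq H$). The hypothesis forces $G/\overline{NH}$ trivial, i.e.\ $\overline{NH} = G$, so $NH$ is dense in $G$.

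The key remaining step, and the one I expect to be the main obstacle, is to upgrade $\overline{NH} = G$ to $NH = G$, i.e.\ to show that $NH$ is in fact closed in $G$. By Proposition \ref{prop:Contains every compact}, $G^{0} \leq N$, so $G/N$ is a Hausdorff quotient of the profinite group $G/G^{0}$ and is itself profinite. Writing $q\colon G \twoheadrightarrow G/N$, we have $NH = q^{-1}(q(H))$, so the task reduces to showing that the image $q(H)$ of the closed normal subgroup $H$ in the profinite group $G/N$ is closed. One route is to invoke Pettis's theorem: the analytic subgroup $q(H) \subseteq G/N$ must be either meager or clopen, and clopen suffices. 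Ruling out the meager case appears to require the model-theoretic structure of $G = \Aut(M)$, in the spirit of the proof of Proposition \ref{prop:Contains every compact}, by exploiting that the normal closed subgroup $H$ induces $\emptyset$-definable equivalence relations on powers of $M$ whose finite classes constrain the image of $H$ inside $\acl^{\eq}(\emptyset)$.
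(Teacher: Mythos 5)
Your left-to-right direction is essentially the paper's argument and is fine, modulo one small point: Proposition \ref{prop:no compact quotients} needs $G/N$ to be compact in the quotient topology, and a continuous bijection of $G/N$ onto a compact group does not give this for free; either read ``compact quotient of $G/H$'' as a quotient by a closed normal subgroup (so that $G/N\cong\left(G/H\right)/\left(N/H\right)$ is compact), or invoke the open mapping theorem for Polish groups. The genuine problem is in the converse. Up to $\overline{NH}=G$ your argument is correct, but the step you yourself flag as the main obstacle --- that $NH$ is closed, i.e.\ that $q\left(H\right)$ is closed in the profinite group $G/N$ --- is exactly where all the content of the statement lies, and your proposed route does not close it. The dichotomy ``an analytic subgroup of a Polish group is either meager or clopen'' cannot rule out the meager case: dense proper analytic subgroups of profinite groups exist and are automatically meager (e.g.\ the image of $\bigoplus_{\omega}\Zz/2\Zz$ in $\left(\Zz/2\Zz\right)^{\omega}$ is dense, analytic and meager), so meagerness is no contradiction, and density is the only consequence of the hypothesis you have extracted. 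The closing appeal to ``the model-theoretic structure in the spirit of Proposition \ref{prop:Contains every compact}'' also points at the wrong mechanism: the $H$-orbit equivalence relations on $M^{n}$ are indeed $\emptyset$-definable (normality of $H$ plus $\omega$-categoricity), but their classes need be neither finite nor finitely many --- that finiteness in Proposition \ref{prop:Contains every compact} came from compactness of $G/H$, which is not available here.

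What is missing is the paper's actual argument for this step. Since $\left(G,H\right)$ having no compact quotients is equivalent to $G^{0}H=G$, one may work with the single subgroup $N=G^{0}$; the role of your density observation is played by noting that for each finite $\emptyset$-definable $X\subseteq\acl^{\eq}\left(\emptyset\right)$ the subgroup $HG_{X}^{0}$ (with $G_{X}^{0}=\Aut\left(M/X\right)$ of finite index) is closed and normal, so $G/HG_{X}^{0}$ is a finite Hausdorff quotient of $G/H$, hence trivial; this gives, for each $g\in G$, elements $h_{X}\in H$ agreeing with $g$ on every finite $X$. The crux is upgrading these finite approximations to a single $h\in H$ with $h\restriction\acl^{\eq}\left(\emptyset\right)=g\restriction\acl^{\eq}\left(\emptyset\right)$: the paper takes the set $D$ of codes $\code{O_{H}\left(a\right)}$ of all $H$-orbits of finite tuples, shows that sending an enumeration $\bar{c}$ of $\acl^{\eq}\left(\emptyset\right)$ to $g\left(\bar{c}\right)$ while fixing $D$ pointwise is elementary, extends this map to an automorphism $h$ by a back-and-forth as in Proposition \ref{prop:Fixing acl(0) is compact quotient in omega-cat}, and concludes $h\in H$ because $h$ preserves every $H$-orbit setwise and $H$ is closed. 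None of this is present in, or implied by, your sketch, so the right-to-left direction remains genuinely incomplete.
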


\begin{proof}
Left to right: suppose that $G/H$ has a compact quotient. Then there
is a normal closed subgroup $N\trianglelefteq G$ such that $H\leq N$
and $G/N$ is compact. By Proposition \ref{prop:Contains every compact},
$N$ contains $G^{0}$. Thus, $G^{0}H\leq N$ and hence $G=N$ (by
Proposition \ref{prop:no compact quotients}). 

Right to left: for a finite $\emptyset$-definable subset $X\subseteq\acl^{\eq}\left(\emptyset\right)$,
let $G_{X}^{0}=\Aut\left(M/X\right)\leq G$ (where we identify $G$
with $\Aut\left(M^{\eq}\right)$). Note that as $X$ is definable,
$G_{X}^{0}$ is normal, hence so is the product with $H$. Since
$\left[G:G_{X}^{0}\right]$ is finite, $HG_{X}^{0}$ is closed as
a finite union of translates of $G_{X}^{0}$ so that $G/HG_{X}^{0}$
is a compact (even finite) Hausdorff quotient of $G/H$, so it must
be trivial and hence that $HG_{X}^{0}=G$. Take $g\in G$, we need
to find $h\in H$ such that $g\restriction\acl^{\eq}\left(\emptyset\right)=h\restriction\acl^{\eq}\left(\emptyset\right)$.
By what we just said, we have that ({*}) for every finite definable
$X\subseteq\acl^{\eq}\left(\emptyset\right)$ there is some $h_{X}\in H$
such that $g\restriction X=h_{X}\restriction X$. But since every
finite $X\subseteq\acl^{\eq}\left(\emptyset\right)$ is contained
in a finite $\emptyset$-definable set $X'\subseteq\acl^{\eq}\left(\emptyset\right)$
($X'$ is just the union of all conjugates of $X$), ({*}) is true
for all finite subset $X\subseteq\acl^{\eq}\left(\emptyset\right)$. 

For every finite tuple $a$ from $M$, $O_{H}\left(a\right)$ (the
orbit of $a$ under $H$) is $a$-definable (because $H$ is normal,
$g\cdot O_{H}\left(a\right)=O_{H}\left(g\left(a\right)\right)$ for
any $g\in G$, so that $O_{H}\left(a\right)$ is $a$-invariant thus
$a$-definable by $\omega$-categoricity). In $M^{\eq}$, every definable
set $X$ has a code $\code X\in M^{\eq}$ (such that the automorphisms
fixing $X$ setwise in $M$ are precisely the automorphisms fixing
$\code X$). (This notation is a bit misleading since there could
be many possible codes for $X$.) Let $D\subseteq M^{\eq}$ be the
collection of all possible codes $\code{O_{H}\left(a\right)}$ for
all finite tuples $a$ from $M$. Then $D$ is invariant under $G$
since $g\left(\code{O_{H}\left(a\right)}\right)=\code{O_{H}\left(g\left(a\right)\right)}$
for all $g\in G$ (i.e., $g\left(\code{O_{H}\left(a\right)}\right)$
is a code for $O_{H}\left(g\left(a\right)\right)$). Let $\bar{c}$
be a tuple enumerating $\acl^{\eq}\left(\emptyset\right)$. Since
every $h\in H$ fixes $D$ pointwise, ({*}) gives us that $\bar{c}\equiv_{D}g\left(\bar{c}\right)$
(because to check this equation it is enough to consider finite subtuples).
Thus, the map $f$ taking $\bar{c}$ to $g\left(\bar{c}\right)$ fixing
$D$ is an elementary map. By a back-and-forth argument almost identical
to the one given in the proof of Proposition \ref{prop:Fixing acl(0) is compact quotient in omega-cat},
there is some automorphism $h\in G$ extending $f$ (the point is
that the relation $\equiv_{\acl^{\eq}\left(\emptyset\right)\cup D\cup A}$
is bounded and $A$-invariant for any finite set $A$, hence definable
and hence has finitely many classes, all of them realized in $M$).
Since $H$ is closed, and $h$ fixes all $H$-orbits setwise (as it
fixes $D$), $h\in H$. Finally, $h\restriction\acl^{\eq}\left(\emptyset\right)=g\restriction\acl^{\eq}\left(\emptyset\right)$
as requested. 
\end{proof}
\begin{example}
\label{exa:If G0=00003DG, then no compact quotients}If $M'$ is an
expansion of $M$ and $\acl^{\eq}\left(\emptyset\right)=\dcl^{\eq}\left(\emptyset\right)$
in $M$ then $\left(G,\Aut\left(M'\right)\right)$ has no compact
quotients. This is because in that case, $G^{0}=G$. 
\end{example}

\subsection{\label{subsec:Expansions-and-reducts}Expansions and reducts of $\omega$-categorical
structures}

A group $H$ acts \emph{oligomorphically} on a set $X$ if for all
$n<\omega$, the number of orbits of $X^{n}$ under the action of
$H$ is finite for every $n<\omega$.

If $M$ is countable and $\omega$-categorical, and $H\leq G$ is
closed, then $H=\Aut\left(M'\right)$ for some expansion of $M$.
In addition, if $H$ acts oligomorphically on $M$, then $M'$ is
$\omega$-categorical by Ryll-Nardzewski. On the other hand, if $G\leq H$
where $H$ is a closed subgroup of the group of permutations of $M$,
then $H=\Aut\left(M'\right)$ for some ($\omega$-categorical) reduct
$M'$ of $M$. Two such reducts $M'$, $M''$ are the same up to bi-definability
if they have the same definable sets, which is equivalent to $\Aut\left(M'\right)=\Aut\left(M''\right)$.
\begin{prop}
\label{prop:G0 acts oligomorphically and has no CQ}Let $M$ be $\omega$-categorical
and $G=\Aut\left(M\right)$. Then $G^{0}\leq G$ acts oligomorphically
on $M$ and $G^{0}=\Aut\left(M'\right)$ for an $\omega$-categorical
expansion $M'$ of $M$ with no compact quotients. 
\end{prop}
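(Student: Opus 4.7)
My plan is to attack the proposition in three stages: first show that $G^{0}$ is oligomorphic on $M$; then exhibit an $\omega$-categorical expansion $M'$ of $M$ with $\Aut(M')=G^{0}$; and finally verify that $M'$ has no compact quotients by analysing $\acl^{\eq}_{M'}(\emptyset)$.

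For oligomorphicity, given $a\in M^{n}$ let $G_{a}=\set{\sigma\in G}{\sigma(a)=a}$, which is an open subgroup of $G$. The number of $G^{0}$-orbits contained in the single $G$-orbit $G\cdot a$ equals the index $[G:G_{a}\cdot G^{0}]$. Since $G_{a}\cdot G^{0}$ is open and contains $G^{0}$, its image in the compact group $G/G^{0}$ (Proposition \ref{prop:Fixing acl(0) is compact quotient in omega-cat}) is an open subgroup of a compact group, therefore of finite index. Combined with $G$ being oligomorphic by $\omega$-categoricity, this shows that $G^{0}$ has only finitely many orbits on $M^{n}$ for every $n$. I would then take $M'$ to be the expansion of $M$ obtained by adding a predicate symbol for every $G^{0}$-invariant subset of $M^{n}$; since $G^{0}$ is a closed subgroup of the permutation group of $M$ one has $\Aut(M')=G^{0}$, and by Ryll--Nardzewski together with the oligomorphicity just established, $M'$ is $\omega$-categorical.

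For the last stage, by Corollary \ref{cor:G0 the maximal compact quotient} applied to $M'$ the claim that $M'$ has no compact quotients reduces to $\Aut(M')^{0}=\Aut(M')$, and by Example \ref{exa:If G0=00003DG, then no compact quotients} this in turn is ensured by $\acl^{\eq}_{M'}(\emptyset)=\dcl^{\eq}_{M'}(\emptyset)$. The main point is the standard computation that $\acl^{\eq}_{M}(\acl^{\eq}_{M}(\emptyset))=\acl^{\eq}_{M}(\emptyset)$: if $b$ is algebraic over a finite tuple $\bar{a}\subseteq\acl^{\eq}_{M}(\emptyset)$ via a formula $\phi(x,\bar{a})$, then since the $G$-orbit of $\bar{a}$ is finite, the $G$-orbit of $b$ lies in the finite union $\bigcup_{\bar{a}'\in G\cdot\bar{a}}\phi(M,\bar{a}')$. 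Since every $\emptyset$-definable imaginary sort of $M'$ is $\acl^{\eq}_{M}(\emptyset)$-definable in $M$, any element of $\acl^{\eq}_{M'}(\emptyset)$ corresponds (after choosing a representative) to an element of $\acl^{\eq}_{M}(\acl^{\eq}_{M}(\emptyset))=\acl^{\eq}_{M}(\emptyset)$, which is pointwise fixed by $G^{0}$ and therefore lies in $\dcl^{\eq}_{M'}(\emptyset)$.

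The principal subtlety to watch is in the last step: the new imaginary sorts of $M'$ arise from $G^{0}$-invariant equivalence relations that may only become $\emptyset$-definable after one is working in $M'$ rather than in $M$, so one must code them back into $M^{\eq}$ using parameters from $\acl^{\eq}_{M}(\emptyset)$ in order to reduce to the algebraic closure computation in $M$. Everything else is a direct application of the machinery already developed earlier in the section.
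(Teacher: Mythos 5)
Your proof is correct, and it reaches the conclusion by a somewhat different route than the paper. The paper defines $M'$ as the home-sort reduct of $M^{\eq}$ with constants named for $\acl^{\eq}\left(\emptyset\right)$ and gets $\omega$-categoricity (hence oligomorphicity of $G^{0}$) from the model-theoretic observation that $\equiv_{\acl^{\eq}\left(\emptyset\right)}$ is a bounded invariant, hence definable, hence finite equivalence relation on each power of $M$; you instead prove oligomorphicity purely group-theoretically, counting the $G^{0}$-orbits inside a $G$-orbit as $\left[G:G_{a}G^{0}\right]$ (normality of $G^{0}$ is what makes $G_{a}G^{0}$ a subgroup and the count valid) and using that an open subgroup of the compact group $G/G^{0}$ has finite index, and you realize $G^{0}$ as $\Aut\left(M'\right)$ via the canonical expansion by predicates for $G^{0}$-invariant relations. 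Both constructions give interdefinable structures, and the closing step is the same as the paper's: idempotence $\acl^{\eq}\left(\acl^{\eq}\left(\emptyset\right)\right)=\acl^{\eq}\left(\emptyset\right)$ forces $H^{0}=H$. The one assertion you leave implicit, that every $\emptyset$-definable relation (and hence imaginary sort) of $M'$ is $\acl^{\eq}_{M}\left(\emptyset\right)$-definable in $M$, does deserve its one-line justification, e.g.\ as in Proposition \ref{prop:Contains every compact}: the relation ``same $G^{0}$-orbit'' on $M^{n}$ is $G$-invariant (normality) and has finitely many classes by your orbit count, so it is $\emptyset$-definable in $M$ and each class is definable over its code in $\acl^{\eq}_{M}\left(\emptyset\right)$; with that added, your transfer of $\acl^{\eq}_{M'}\left(\emptyset\right)$ into $\acl^{\eq}_{M}\left(\acl^{\eq}_{M}\left(\emptyset\right)\right)$ goes through exactly as you describe. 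Your version buys an explicit, self-contained proof of the orbit-finiteness and a more carefully spelled-out treatment of the imaginaries of $M'$, at the cost of being longer than the paper's argument, which leans on the boundedness-implies-definability mechanism already set up in Section \ref{subsec:The-maximal-compact}.
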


\begin{proof}
Let $M'_{0}$ be the expansion of $M^{\eq}$ obtained by naming (i.e.,
adding constants for) every element in $\acl^{\eq}\left(\emptyset\right)$.
Then $M_{0}'$ is still $\omega$-categorical (as a many sorted structure)
since for any given sort (or finite collection of sorts) $S$, the
equivalence relation $\equiv_{\acl^{\eq}\left(\emptyset\right)}$
on $S$-tuples, of having the same type over $\acl^{\eq}\left(\emptyset\right)$,
is bounded, definable and hence finite, as in the arguments above.
Let $M'$ be the reduct to the home sort (so it is also $\omega$-categorical).
By definition, $G^{0}=\Aut\left(M'\right)$. Moreover, letting $H=\Aut\left(M'\right)$,
we have that $H^{0}=H$. This is because $\acl^{\eq}\left(\emptyset\right)=\acl^{\eq}\left(\acl^{\eq}\left(\emptyset\right)\right)$. 
\end{proof}

\subsection{\label{subsec:Ramsey, top dynamics}A discussion of Ramsey classes
and topological dynamics}

\subsubsection{Ramsey Classes}

Let us start with the definition.
\begin{defn}
For two $L$-structures $A,B$, we let ${B \choose A}$ be the set
substructures of $B$ isomorphic to $A$. Suppose that $K$ is a class
of finite $L$-structures. We say that $K$ is a \emph{Ramsey class}
if for every $A,B\in K$ and $k<\omega$ there is some $C\in K$ such
that $C\to\left(B\right)_{k}^{A}$: for every function $f:{C \choose A}\to k$
there is some $B'\in{C \choose B}$ such that $f$ is constant on
${B' \choose A}$. 

Say that an ultrahomogeneous $L$-structure $M$ with a quantifier-free
definable linear order is a\emph{ Ramsey structure} if $\Age\left(M\right)$
is a Ramsey class.
\end{defn}

Ramsey classes are extremely important classes of finite structure.
There are many examples of Ramsey classes, in particular the class
of finite linear orders (this is just Ramsey's theorem) and furthermore,
by a theorem of Ne\v{s}et\v{r}il and R\"odl \cite[Theorem A]{MR692827},
proved independently by Harrington and Abramson \cite[Appendix B]{MR503795},
the class of all finite linearly ordered graphs, or more generally
the class of all finite linearly ordered structures in a fixed finite
relational language is Ramsey. In fact, \cite[Theorem 4.26]{Hubicka2016}
generalizes this to allow function symbols as well. 

There is another definition of Ramsey structures that colors embeddings
instead of copies, see \cite[Definition 2.2]{MR3497266}. This is
equivalent to our definition since we asked for a quantifier-free
definable linear order (so that finite substructures are rigid). If
we drop the requirement that there is a definable linear order then
these definitions do not agree in general.  In fact, using the alternative
definition there must be a definable order in the $\omega$-categorical
case, so these are equivalent in this case. 
\begin{fact}
\cite[Corollary 2.26]{MR3497266} If $M$ is an $\omega$-categorical
ultrahomogeneous Ramsey structure according to \cite[Definition 2.2]{MR3497266},
then there is a definable linear order on $M$. 
\end{fact}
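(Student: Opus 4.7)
The plan is to combine the Kechris--Pestov--Todorcevic correspondence with $\omega$-categoricity. By the theorem of Kechris, Pestov and Todorcevic \cite{MR2140630}, for an ultrahomogeneous structure $M$ the class $\Age(M)$ has the Ramsey property (in the embeddings version of \cite[Definition 2.2]{MR3497266}) if and only if $\Aut(M)$ is extremely amenable, i.e.\ every continuous action of $G := \Aut(M)$ on a nonempty compact Hausdorff space has a fixed point. So as soon as I can exhibit a compact $G$-flow whose fixed points are $G$-invariant linear orders on $M$, I am essentially done.

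The natural candidate is $LO(M) \subseteq 2^{M\times M}$, the set of linear orders on $M$, viewed as a closed (hence compact) subspace of the product space $2^{M\times M}$. The group $G$ acts continuously on $LO(M)$ by $(\sigma \cdot {<})(x,y) \iff \sigma^{-1}(x) < \sigma^{-1}(y)$, and $LO(M)$ is nonempty since $M$ is countable. Extreme amenability of $G$ then produces a $G$-invariant linear order $<$ on $M$.

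The last step is to upgrade $G$-invariance to $\emptyset$-definability, which is where $\omega$-categoricity enters: by the Ryll-Nardzewski theorem, every $\Aut(M)$-invariant subset of $M^n$ is $\emptyset$-definable in $M$. Applying this to the binary relation $<$ gives that $<$ is $\emptyset$-definable, which is the conclusion.

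The only mildly delicate point is to make sure we are really using the embedding version of the Ramsey property and not the copies version; this matters precisely because without an a priori order on the structures of $\Age(M)$ there may be nontrivial automorphisms, and the KPT theorem pairs extreme amenability with the stronger, embedding-based formulation. Once that is sorted out, the remaining content is the standard KPT argument plus the elementary compact-flow construction described above, so I do not expect any further obstacle.
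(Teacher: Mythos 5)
Your argument is correct and is essentially the standard proof of the cited result: the paper itself gives no proof of this Fact (it is quoted as Corollary 2.26 of the reference), and the proof there runs exactly along your lines \textemdash{} extreme amenability of $\Aut\left(M\right)$ from the embedding version of the Ramsey property via KPT, a fixed point in the compact flow of linear orders on $M$, and then Ryll-Nardzewski to turn the $\Aut\left(M\right)$-invariant order into a $\emptyset$-definable one. Your remark about needing the embedding (rather than copies) formulation is precisely the relevant subtlety, and it is handled correctly.
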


What about dense trees? Adding a generic linear order to a dense tree
will not result in a Ramsey structure. By this we mean the model completion
of the theory $T_{dt,<,\forall}$ in the language $\left\{ <,\wedge,<'\right\} $
which says that the $\left\{ <,\wedge\right\} $-part is a meet tree,
and $<'$ is a linear order. The class of finite structures of $T_{dt,<,\forall}$
easily has HP, JEP and HP, thus this model completion exists (see
Fact \ref{fact:Fraisse limits}). Call its Fra\"iss\'e limit the
generically linearly ordered tree. It turns out that this structure
is not Ramsey, see Claim \ref{claim:The-gen lin tree is not Ramsey}.
In any case, we can add a linear order to the tree structure and make
it Ramsey.
\begin{example}
\label{exa:lex trees are ramsey} (\cite[Corollary 3.17]{MR3373612},
and see there for more references) Let $L=\left\{ <,<_{\lex},\wedge\right\} $
and let $M$ be the $L$-structure whose universe is the tree $\omega^{<\omega}$
with the natural interpretations of $<$ as the tree order, $\wedge$
as the meet function ($s\wedge t=s\restriction\len\left(s\wedge t\right)$
where $\len\left(s\wedge t\right)=\max\set k{s\restriction k=t\restriction k}$),
$<_{\lex}$ as the lexicographical order ($s<_{\lex}t$ iff $s<t$
or $s\left(\len\left(s\wedge t\right)\right)<t\left(\len\left(s\wedge t\right)\right)$).
Then $M$ is a Ramsey structure. 
\end{example}

\begin{fact}
\label{fact:Ramsey classes have AP} \cite[Theorem 4.2]{MR2128088}
A Ramsey class that has HP and JEP has AP (see Section \ref{subsec:-cateogiricity,-Homogeneous-stru}
for the definitions). 
\end{fact}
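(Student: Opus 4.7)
The plan is to reduce amalgamation to a Ramsey coloring argument: JEP provides an initial common superstructure of $B$ and $C$ in which the two copies of $A$ need not agree, and the Ramsey property is then used to pass to a larger ambient structure in which they can be matched.

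Given $A,B,C\in K$ with embeddings $f_B\colon A\to B$ and $f_C\colon A\to C$, I would first apply JEP to $B$ and $C$ to produce $E\in K$ with embeddings $u\colon B\to E$, $v\colon C\to E$ and two copies $A_B=u(f_B(A))$ and $A_C=v(f_C(A))$ of $A$ in $E$. Next, invoke the Ramsey property to obtain $D\in K$ with $D\to(E)^A_2$ and define a coloring $\chi\colon{D\choose A}\to\{0,1\}$ by $\chi(X)=0$ iff there is an embedding $\phi\colon B\to D$ with $\phi(f_B(A))=X$. On a monochromatic $E'\in{D\choose E}$, fix an isomorphism $i\colon E\to E'$; the substructure $i(A_B)\subseteq E'$ has color $0$ (take $\phi=i\circ u$), so $\chi\equiv 0$ on ${E'\choose A}$. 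Applying this to $i(A_C)$ yields an embedding $\phi\colon B\to D$ with $\phi(f_B(A))=i(A_C)$; combined with the embedding $i\circ v\colon C\to D$, whose restriction to $f_C(A)$ also has image $i(A_C)$, this should produce the desired amalgamation diagram in $D$.

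The main obstacle I anticipate is that the Ramsey property as stated colors substructures rather than embeddings, so the argument above only yields $\phi\circ f_B$ and $(i\circ v)\circ f_C$ with the same image $i(A_C)\subseteq D$, not necessarily agreeing as functions; they differ by some $\sigma\in\mathrm{Aut}(A)$ which in general will not extend to $B$ or $C$. To close this gap I would refine the coloring: fix once and for all a reference isomorphism $A\to X$ for every $X\in{D\choose A}$, and then color $X$ by the function $\mathrm{Aut}(A)\to\{0,1\}$ recording, for each $\sigma$, whether the corresponding composite isomorphism extends to $B$ via $f_B$. Since $A$ is finite this is still a coloring by finitely many colors, and on a monochromatic $E'$ we recover enough embedding information to match the two maps into $D$ exactly. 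The whole issue disappears when substructures of $K$ are rigid, which is automatic for Ramsey classes carrying a quantifier-free definable linear order, the setting the paper is mostly concerned with. This is in essence the argument of Ne\v{s}et\v{r}il in \cite[Theorem 4.2]{MR2128088}.
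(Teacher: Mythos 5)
The paper does not prove this statement: it is quoted as a black box from Ne\v{s}et\v{r}il \cite[Theorem 4.2]{MR2128088}, so your argument can only be judged on its own terms. Your main line (JEP to get $E$, then a $2$-coloring of the copies of $A$ in a witness $D$ of $D\to(E)^A_2$ by extendability to a copy of $B$, then homogenize) is indeed the standard argument, and you correctly isolate the real issue: it only matches the \emph{images} $\phi(f_B(A))=i(v(f_C(A)))$, so $\phi\circ f_B$ and $(i\circ v)\circ f_C$ may differ by a nontrivial automorphism of $A$.

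The proposed repair, however, does not close that gap. With an arbitrary reference isomorphism $e_X\colon A\to X$ chosen for each copy $X$, and the colour of $X$ being the set $S(X)=\{\sigma\in\Aut(A): e_X\circ\sigma \text{ extends to an embedding of } B\}$, monochromaticity of $E'$ only tells you that $S(X)$ is the \emph{same} set $S$ for all copies $X$ of $A$ inside $E'$, and the copy $i(A_B)$ shows $S\neq\emptyset$, say $\sigma_0\in S$. But what you need at the copy $X_C=i(v(f_C(A)))$ is that the \emph{particular} automorphism $\tau_0=e_{X_C}^{-1}\circ i\circ v\circ f_C$ lies in $S$; nothing forces $\tau_0=\sigma_0$, since the reference isomorphisms at the two copies were chosen independently and you have no control over the embedding of $C$ other than $i\circ v$. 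So after the refinement you are in exactly the same position as before: images agree, maps need not. The honest ways to finish are (a) rigidity of the members of the class, which is automatic in the paper's setting because a Ramsey structure there carries a quantifier-free definable linear order (this is all that is needed for the application to $\Age(\omega^{<\omega})$ with $<_{\lex}$), and which in general is itself a known but nontrivial consequence of the Ramsey property requiring a separate coloring argument; or (b) working with the embedding formulation of the Ramsey property, where your first paragraph goes through verbatim with embeddings in place of copies. As written, the step ``we recover enough embedding information to match the two maps into $D$ exactly'' is unjustified.
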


It is easy to see that $K=\Age\left(\omega^{<\omega}\right)$ in the
language $\left\{ <,<_{\lex},\wedge\right\} $ has JEP, and thus
we conclude that it has AP. Let $M$ be its Fra\"iss\'e limit and
let $T_{dt,<_{lex}}=Th\left(M\right)$. Since $K$ is uniformly locally
finite  it follows that $M$ is $\omega$-categorical and $T_{dt,<_{lex}}$
has quantifier elimination (see Fact \ref{fact:Fraisse limits}).

By Proposition \ref{prop:restriction of a mix of two Faisse} we have
that the restriction of $M$ to $\left\{ <,\wedge\right\} $ is a
dense tree, i.e., a model of $\DenseTrees$ (see Example \ref{exa:dense trees}).
Here, $L_{1}=\left\{ <,\wedge\right\} $, $L_{2}=\left\{ <,<_{\lex},\wedge\right\} $,
$K_{1}$ the class of finite meet trees and $K_{2}=\Age\left(\omega^{<\omega}\right)$.
Similarly, the restriction of the generically linearly ordered tree
is also a dense tree. 
\begin{claim}
\label{claim:The-gen lin tree is not Ramsey}The generically linearly
ordered tree is not a Ramsey structure. 
\end{claim}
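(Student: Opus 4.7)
The plan is to give a direct non-Ramsey certificate for the generically linearly ordered tree $M$: exhibit $A,B\in\Age(M)$ and an integer $k$ such that for every $C\in\Age(M)$ one can produce a coloring $\chi\colon\binom{C}{A}\to k$ with no monochromatic copy of $B$. The idea is to exploit the genericity of $<'$: although $<'$ is unconstrained, the underlying finite meet-tree of any $C$ always admits a ``lex-like'' linear order $<_{\lex}$, obtained by fixing an arbitrary order on the children (i.e. covers) of each node and propagating lexicographically. Such an order satisfies the rigid non-interleaving constraint of the lex trees of Example~\ref{exa:lex trees are ramsey}, whereas $<'$ need not, and this tension is what we color by.

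Concretely, let $A=\{a_0,a_1,a_2\}$ be the three-element V-shape with $a_0=a_1\wedge a_2<a_1,a_2$, with $a_1,a_2$ $<$-incomparable, and linear order $a_0<'a_1<'a_2$. Let $B=\{b_0,\ldots,b_4\}$ be the five-element ``Y-tree'' with tree relations $b_0<b_1,b_2$ and $b_1<b_3,b_4$ (so $b_2$ is a leaf over $b_0$ while $b_3,b_4$ are leaves over $b_1$), and with linear order $b_0<'b_1<'b_2<'b_3<'b_4$. A routine check shows that the copies of $A$ inside $B$ are exactly the four V-shapes $\{b_0,b_1,b_2\}$, $\{b_1,b_3,b_4\}$, $\{b_0,b_2,b_3\}$ and $\{b_0,b_2,b_4\}$. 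Take $k=2$; for any $C\in\Age(M)$ fix any lex-like order $<_{\lex}$ on the meet-tree reduct of $C$ and color each copy $\{x,y,z\}\in\binom{C}{A}$ (with $x<'y<'z$) by $0$ if $y<_{\lex}z$ and by $1$ otherwise.

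To see that no copy $B'=\{b_0',\ldots,b_4'\}$ of $B$ in $C$ is monochromatic, note that the meet $b_1'\wedge b_2'$ computed in $C$ equals $b_0'$; for $i=1,2$ let $m_i$ be the child of $b_0'$ in $C$ satisfying $m_i\leq b_i'$. Then $m_1\neq m_2$ and $b_1'<_{\lex}b_2'\iff m_1<_{\lex}m_2$ by definition of $<_{\lex}$. Since $b_3'\geq b_1'\geq m_1$, the same $m_1$ governs $b_3'$, so also $b_3'<_{\lex}b_2'\iff m_1<_{\lex}m_2\iff b_1'<_{\lex}b_2'$. Hence the top pairs $(b_1',b_2')$ and $(b_2',b_3')$ of the V-copies $\{b_0',b_1',b_2'\}$ and $\{b_0',b_2',b_3'\}$ are compared in opposite directions by $<_{\lex}$, so these two V-copies receive opposite colors, and $B'$ cannot be monochromatic. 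The only non-routine step is the design of $B$: it is engineered so that two of its V-copies share two vertices in a configuration forcing contradictory lex comparisons under any sibling-order at $b_0'$.
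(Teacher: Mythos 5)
Your proof is correct and takes essentially the same route as the paper's: both arguments color the $V$-shaped copies of $A$ by comparing the generic order $<'$ against a lexicographic-type order, and both use a five-element ``caterpillar'' $B$ in which two copies of $A$ share the outer leaf, so that the convexity of any lex order at the branching node (elements in the same cone lie on the same side of an element from another cone) contradicts monochromaticity. The only, harmless, difference is that you build the lex-like order by hand on each finite $C$ by choosing an order on the covers of each node, whereas the paper fixes one global order $<_{\lex}$ on $M$ itself (coming from the lexicographically ordered dense tree via $\omega$-categoricity) and colors copies inside $M$; your version is slightly more finitary and self-contained, but the combinatorial core is identical.
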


\begin{proof}
Let $K=\Age\left(M\right)$ where $M$ is the countable generically
linearly ordered tree. As $N=M\restriction\left\{ <,\wedge\right\} \models\DenseTrees$,
by $\omega$-categoricity, there is some linear order $<_{lex}$ such
that $\left(N,<_{lex}\right)\models T_{dt,<_{lex}}$. 

Fix some $A\in K$ whose universe contains 3 elements $a,b,a\wedge b$
such that $a\wedge b<a,b$ and $a<'b<'a\wedge b$. Define a coloring
$f:{M \choose A}\to2$ by $f\left(A'\right)=0$ iff {[}$a<'b$ iff
$a<_{lex}b$ (in $A'$){]}. If $M$ were Ramsey, there would be some
homogeneous $B'\in{M \choose B}$ where $B\in K$ is such that $B$
contains 5 elements $a,b,a\wedge b,c,a\wedge c$ where $a\wedge c<a,c$
and $a\wedge c<a\wedge b<a,b$ and $a<'c<'b<'a\wedge b<'b\wedge c$.
It follows that for any copy of $B$ in $M$, $a<_{lex}c$ iff $b<_{lex}c$.
However in $B'$ we have that $a<_{lex}c$ iff $c<_{lex}b$ \textemdash{}
contradiction. 
\end{proof}
We end this discussion with the following fact.
\begin{fact}
\label{fact:Expanding Ramsey by naming elements}\cite[Theorem 3.10]{MR3497266}
If $M$ is an ultrahomogeneous Ramsey structure, and $c\in M$, then
the structure $M'=\left(M,c\right)$ where $c$ is a named constant
is still Ramsey (and ultrahomogeneous). 
\end{fact}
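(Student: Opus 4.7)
I would begin with ultrahomogeneity, which is immediate. In $L\cup\{c\}$ every finite substructure of $M'=(M,c)$ contains the interpretation of $c$, so any isomorphism $f\colon A\to B$ between such substructures automatically has $f(c)=c$. Viewing $f$ as an $L$-isomorphism between finite substructures of $M$, ultrahomogeneity of $M$ extends it to $\sigma\in\Aut(M)$, and $\sigma(c)=c$ yields $\sigma\in\Aut(M')$.

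For the Ramsey property, fix $A,B\in\Age(M')$, both containing the constant $c$, and $k<\omega$, and let $A^{-},B^{-}\in\Age(M)$ be the $L$-reducts (still containing $c$ as an element). I want some $C\in\Age(M')$ with $C\to\left(B\right)_{k}^{A}$ in the category of $L\cup\{c\}$-structures, whose arrows are the $c$-preserving $L$-embeddings. The natural plan is to apply the Ramsey property of $\Age(M)$ to $A^{-},B^{-}$ with $k+1$ colors to get some $C^{-}\in\Age(M)$, to arrange $c\in C^{-}$ using ultrahomogeneity of $M$, and to translate a coloring $\chi$ of $c$-preserving $L$-copies of $A^{-}$ in $C^{-}$ into a coloring $\chi'$ of \emph{all} $L$-copies of $A^{-}$ in $C^{-}$ by assigning a new ``dummy'' color to the non-$c$-preserving ones. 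A $\chi'$-monochromatic copy of $B^{-}$ receiving one of the original $k$ colors would automatically be $c$-preserving: any $c$-preserving embedding $A^{-}\to B^{-}$ composed with the inclusion into $C^{-}$ is still $c$-preserving, which forces the distinguished point of $B^{-}$ to coincide with $c$.

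The main obstacle, and the delicate part of the argument, is excluding the bad case in which the $\chi'$-monochromatic copy of $B^{-}$ is colored by the dummy color: such a copy can simply miss $c$, or place it at an $\Aut(B^{-})$-orbit position incompatible with the distinguished point of $B$. A single application of Ramsey cannot prevent this. I would address the obstacle with an iterated Ramsey construction. First, apply Ramsey to the singleton coloring problem inside $B^{-}$ to get $D^{-}\in\Age(M)$ such that any bounded coloring of the elements of $D^{-}$ of the $L$-type of $c$ admits a copy of $B^{-}$ whose $c$-candidate elements are monochromatic (this is Ramsey applied to the trivial one-point structure). Second, apply Ramsey to $A^{-},B^{-}$ inside $D^{-}$. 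A standard partite/product Ramsey bookkeeping then combines the two outputs into a single $C^{-}\in\Age(M)$ with distinguished element $c$ inside which any $\chi'$-monochromatic copy of $B^{-}$ is forced to be $c$-preserving, completing the argument. This two-layer strategy is the essence of the proof of \cite[Theorem~3.10]{MR3497266}.
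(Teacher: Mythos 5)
Your ultrahomogeneity argument is correct, and note that the paper gives no proof of this statement at all: it is quoted as a Fact from \cite[Theorem 3.10]{MR3497266}, so the comparison is with the literature rather than with an in-paper argument. The Ramsey half of your proposal has a genuine gap, located exactly where you place the ``delicate part''. First, the dummy-colour reduction is essentially vacuous: since finite substructures are rigid (there is a quantifier-free definable linear order), every copy of $A^{-}$ inside a copy $B'$ of $B^{-}$ has a well-defined distinguished point, and in general $B^{-}$ contains copies of $A^{-}$ whose distinguished point differs from that of $B$ (already for $A^{-},B^{-}$ finite chains in $\left(\Qq,<\right)$). For such $B$, no copy of $B^{-}$ can be monochromatic in one of the $k$ real colours, so the Ramsey property of $\Age\left(M\right)$ simply hands you a dummy-monochromatic copy, which may avoid $c$ altogether. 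Second, the proposed repair does not address this: colouring the elements of $D^{-}$ having the $L$-type of $c$ and extracting a copy of $B^{-}$ whose ``$c$-candidates'' are monochromatic gives no control whatsoever over whether that copy passes through the one designated element $c$ of $C^{-}$, let alone at the position prescribed by $B$. Forcing monochromatic copies into a prescribed position relative to a fixed parameter is the entire content of the theorem (for $\left(\Qq,<,0\right)$ it already amounts to a product Ramsey theorem rather than a direct application of Ramsey for linear orders), and the phrase ``standard partite/product Ramsey bookkeeping'' is precisely the missing proof; it is also not the argument of the cited reference.

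The standard proof is dynamical, via the correspondence in Fact \ref{fact:ramsey iff extremeley amenable}: $G=\Aut\left(M\right)$ is extremely amenable; extreme amenability passes to open subgroups (given an $H$-flow $X$ with $H\leq G$ open, the co-induced flow $\set{f:G\to X}{f\left(hg\right)=hf\left(g\right)\text{ for all }h\in H,g\in G}$ is a nonempty compact $G$-flow with continuous action because $H$ is open, and a $G$-fixed point is a constant function whose value is $H$-fixed); $\Aut\left(M,c\right)$ is the open stabilizer $G_{c}$; and $\left(M,c\right)$ is ultrahomogeneous and still carries the quantifier-free definable order, so applying the correspondence again yields that $\left(M,c\right)$ is Ramsey. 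If you want a purely combinatorial proof, you need a genuinely new idea to pin copies to the parameter $c$; iterating the plain Ramsey property of $\Age\left(M\right)$ in the way you describe will not do it.
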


\subsubsection{Topological dynamics and extremely amenable groups}

Let us first recall some basic notions from topological dynamics. 

Suppose that $G$ is a topological group. A \emph{$G$-flow} is a
compact Hausdorff space $X$ with a continuous action of $G$. A subflow
of $X$ is a compact subspace $Y\subseteq X$ that is preserved by
the action of $G$, i.e., $gY=Y$ for all $g\in G$. A \emph{$G$-ambit}
is pair $\left(X,x_{0}\right)$ where $X$ is a $G$-flow and $x_{0}$
has a dense orbit. A universal $G$-ambit is a $G$-ambit $\left(X,x_{0}\right)$
such that for any ambit $\left(Y,y_{0}\right)$ there is a map $f:X\to Y$
taking $x_{0}$ to $y_{0}$ that commutes with the action: $gf\left(x\right)=f\left(gx\right)$
for all $x\in X$ (it follows that $f$ is onto). A universal $G$-ambit
exists and is unique (see \cite[Chapter 8]{MR956049}). Finally, $G$
is called \emph{extremely amenable} if for every $G$-flow $X$, there
is some fixed point $x\in X$ (i.e., $gx=x$ for all $g\in G$). 

Kechris, Pestov, and Todorcevic \cite{MR2140630} found a striking
link between Ramsey classes and topological dynamics, described in
the following theorem.
\begin{fact}
\label{fact:ramsey iff extremeley amenable}\cite[Theorem 4.7]{MR2140630}
Suppose that $M$ is a countable ultrahomogeneous linearly ordered
structure in a countable language. Then $\Aut\left(M\right)$ is extremely
amenable iff $M$ is a Ramsey structure. 
\end{fact}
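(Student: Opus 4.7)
The plan is to use Pestov's combinatorial criterion for extreme amenability: a topological group $G$ is extremely amenable if and only if, for every continuous finite coloring $c:G\to k$ and every finite $F\subseteq G$, there exists $g_{0}\in G$ such that $g_{0}F$ is $c$-monochromatic. In our setting $G=\Aut(M)$, a basis of identity neighborhoods is given by $V_{A}=\{g\in G:g|_{A}=\id_{A}\}$ for finite $A\subseteq M$, and continuity forces $c$ to factor as $c=\tilde{c}\circ\pi_{A}$ for some finite $A$ and $\tilde{c}:G/V_{A}\to k$. The crucial initial observation is that the definable linear order makes every finite substructure of $M$ rigid, so that $gV_{A}\mapsto g(A)$ is a canonical bijection between $G/V_{A}$ and ${M \choose A}$; colorings of $V_{A}$-cosets are thus the same as colorings of ${M \choose A}$.

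For the direction from Ramsey to extreme amenability, given $c$ factoring through $V_{A}$ and $F=\{g_{1},\ldots,g_{n}\}$, let $B\in\Age(M)$ be the substructure of $M$ generated by $A\cup g_{1}(A)\cup\cdots\cup g_{n}(A)$. Under the bijection above, $c$ corresponds to a coloring $\hat{c}:{M \choose A}\to k$, and the condition that $g_{0}F$ be $c$-monochromatic translates to $\hat{c}$ taking a single value on $\{g_{0}(g_{i}(A))\mid i\leq n\}\subseteq{g_{0}B \choose A}$. Applying the Ramsey property produces $C\in\Age(M)$ with $C\to(B)^{A}_{k}$; embed $C$ into $M$, restrict $\hat{c}$ to ${C \choose A}$, and extract a $\hat{c}$-monochromatic copy $B'$ of $B$. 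Ultrahomogeneity then supplies $g_{0}\in G$ with $g_{0}(B)=B'$, and by construction $g_{0}F$ is $c$-monochromatic.

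For the converse, I would show the equivalent statement that, for every $A,B\in\Age(M)$ and every coloring $c:{M \choose A}\to k$, there is a $c$-monochromatic copy of $B$ in $M$; the Ramsey property for $\Age(M)$ then follows by a standard compactness argument on the product space of colorings. Consider the compact Hausdorff $G$-flow $X=k^{{M \choose A}}$ with the pointwise action $(g\cdot\chi)(A')=\chi(g^{-1}A')$, and let $Y=\overline{G\cdot c}\subseteq X$, a subflow. Extreme amenability produces a $G$-fixed point $c^{*}\in Y$. Since $G$ acts transitively on ${M \choose A}$ (again by rigidity combined with ultrahomogeneity), a $G$-fixed element of $X$ must be a constant function, so $c^{*}\equiv i$ for some $i\in k$. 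That $c^{*}$ lies in $\overline{G\cdot c}$ means that, for the finite set ${B_{0} \choose A}$ attached to any fixed copy $B_{0}$ of $B$ in $M$, there exists $g\in G$ with $c(g^{-1}A')=i$ for every $A'\in{B_{0} \choose A}$; the translate $g^{-1}B_{0}$ is then a $c$-monochromatic copy of $B$.

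The step I expect to be the main obstacle is the identification $G/V_{A}\leftrightarrow{M \choose A}$, which is exactly where the quantifier-free definable linear order is used: rigidity is needed to turn the orbit of an enumeration of $A$ into the set of copies of the induced substructure on $A$. Without it, Pestov's criterion corresponds to a Ramsey-type statement for embeddings rather than for copies, and one direction of the equivalence can genuinely fail; this is precisely the phenomenon noted in the paper just before the statement, regarding the alternative definition of \cite[Definition 2.2]{MR3497266}. The remainder of the argument is the standard translation dictionary between the finitary criterion and the Ramsey arrow notation.
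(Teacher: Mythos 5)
This statement is not proved in the paper at all: it is quoted verbatim as \cite[Theorem 4.7]{MR2140630}, so there is no internal argument to compare against. Your sketch is, in substance, a correct reconstruction of the Kechris--Pestov--Todorcevic proof: both directions check out. In the direction ``extremely amenable $\Rightarrow$ Ramsey'', the flow $k^{{M \choose A}}$, the orbit closure of $c$, the transitivity of $G$ on copies of $A$ (forcing the fixed point to be a constant coloring), and the pull-back to a monochromatic copy of $B$ inside $g^{-1}B_{0}$ are all sound, and the passage from the ``infinite'' statement $M\to\left(B\right)_{k}^{A}$ to a finite $C\in\Age\left(M\right)$ is indeed a routine compactness argument. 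In the converse direction the identification of $G/V_{A}$ with ${M \choose A}$ via rigidity, and the use of $C\to\left(B\right)_{k}^{A}$ together with ultrahomogeneity to move a monochromatic copy $B'$ onto $g_{0}\left(B\right)$, are exactly the standard argument.

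One point should be stated more carefully: Pestov's criterion is \emph{not} correctly phrased with arbitrary continuous finite colorings of $G$. A finite coloring with clopen color classes is only locally constant, and the finite set $A$ witnessing local constancy may vary from point to point, so mere continuity does not force $c$ to factor through a single quotient $G/V_{A}$. The criterion you want (and the one proved by Pestov and used in \cite{MR2140630}) is for \emph{left uniformly continuous} colorings, equivalently colorings of $G/V$ for a single open subgroup $V$; with that formulation your verification is exactly what is required, so the slip is cosmetic rather than a gap. You should also make explicit that this criterion is itself a nontrivial theorem (it is where the greatest ambit/Samuel compactification enters), so your argument is a reduction of the stated Fact to that cited result plus the combinatorial dictionary, which is entirely in the spirit of how the paper itself treats the statement.
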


\section{Having a cyclically dense conjugacy class}
\begin{defn}
Suppose that $G$ is a topological group.
\begin{enumerate}
\item The group $G$ has\emph{ finite topological rank }if it has a finitely
generated dense subgroup. Similarly, $G$ has\emph{ topological rank
$n$ }(or \emph{topologically $n$-generated}) if there are $\set{f_{i}}{i<n}\subseteq G$
which generate a dense subgroup.
\item The group $G$ has a \emph{cyclically dense conjugacy class} if there
are $f_{1},f_{2}\in G$ such that $\set{f_{1}^{-n}f_{2}f_{1}^{n}}{n\in\Zz}$
is dense in $G$. 
\end{enumerate}
\end{defn}

\begin{rem}
\label{rem:constraint for cdcc compact quotient} If $f:G_{1}\to G_{2}$
is a surjective continuous homomorphism, and $G_{1}$ has a dense
conjugacy class, then so does $G_{2}$. Also, if $H$ is a finite
nontrivial group (with the discrete topology), then $H$ cannot have
a dense conjugacy class. Therefore, the same is true for nontrivial
profinite groups. Hence if $G$ is a topological group with a nontrivial
profinite quotient, it does not contain a dense conjugacy class. Let
$M$ be countable and $\omega$-categorical and $G=\Aut\left(M\right)$.
By Proposition \ref{prop:Fixing acl(0) is compact quotient in omega-cat},
$G/G^{0}$ is profinite. It follows that one constraint against having
a dense conjugacy class is having a nontrivial compact quotient. In
model theoretic terms, it means that if $\acl^{\eq}\left(\emptyset\right)\neq\dcl^{\eq}\left(\emptyset\right)$
(equivalently, $G^{0}\neq G$), then $G$ cannot have a dense conjugacy
class.

Moreover, in general, we can have that $\left(\left(\Zz/2\Zz\right)^{\omega},+\right)$
is a quotient of $\Aut\left(M\right)$, which is locally finite so
certainly not topologically finitely generated, in which case $G=\Aut\left(M\right)$
cannot be topologically finitely generated. For example, let $L=\set{E_{n}}{n<\omega}$
where each $E_{n}$ is a $2n$-ary relation. Let $T_{\forall}$ say
that $E_{n}$ is an equivalence relation with two classes, and that
$\left(x_{1},\ldots,x_{n}\right)\mathrela{E_{n}}\left(y_{1},\ldots,y_{n}\right)\to\bigwedge_{i\neq j}x_{i}\neq x_{j}\land y_{i}\neq y_{j}$
(there is no relation between different $E_{n}$'s). The class of
finite $T_{\forall}$ models has AP and JEP (and it is essentially
countable), and by Fact \ref{fact:Fraisse limits} (which we can use
since for any $n<\omega$ there are finitely many isomorphism types
of $n$-element structures), there is a model completion $T$ which
is $\omega$-categorical.  Let $M$ be the countable model. Then
$\acl^{\eq}\left(\emptyset\right)$ contains $M^{n}/E_{n}$ for all
$n<\omega$, and for any $\eta\in\Zz/2\Zz$, there is an automorphism
$\sigma\in G=\Aut\left(M\right)$ such that $\sigma\restriction M^{n}/E_{n}$
is the identity iff $\eta\left(n\right)=0$. In fact one can show
that $G/G^{0}=\left(\left(\Zz/2\Zz\right)^{\omega},+\right)$ and
that $\sigma$ fixes $\acl^{\eq}\left(\emptyset\right)$ iff it fixes
all $E_{n}$-classes.    

This construction is due to Cherlin and Hrushovski (see \cite[Proof of Theorem 5.2]{MR1015304}
and \cite[Addendum (2)]{Lascar}). Using a similar technique, in \cite[Lemma 3.1]{MR1042609}
it is shown that any profinite group $H$ which has a countable basis
of open subgroups can be realized as $G/G^{0}$ for some automorphism
group $G$ of an $\omega$-categorical structure $M$.  
\end{rem}

\begin{defn}
Suppose that $M$ is some structure and $a,b\in M$ are some tuples.
We write $a\ind^{ns}b$ to say that $\tp\left(a/b\right)$ does not
split over $\emptyset$. When $M$ is homogeneous, this means, letting
$B$ be the set $b$ enumerates: if $g:B'\to B''$ is a partial automorphism
of $B$ (i.e., $B',B''\subseteq B$ and $g$ extends to an automorphism
of $M$) then $g$ extends to an automorphism of $M$ which fixes
$a$ pointwise. 
\end{defn}

In the next definition, our convention is that for sets $A,B$, we
write $A\ind B$ if this is true for tuples enumerating $A,B$.
\begin{defn}
An automorphism $\sigma\in\Aut\left(M\right)$ is \emph{repulsive}
if for every finite set $A\subseteq M$ there is some $n$ such that
$A\ind^{ns}\sigma^{n}\left(A\right)$ and $\sigma^{n}\left(A\right)\ind^{ns}A$.
Say that $\sigma$ is \emph{strongly repulsive} if this is true for
all $m\geq n$ as well. 
\end{defn}

Suppose that $M$ is some structure. For $k<\omega$, add predicates
$P_{1},\ldots,P_{k}$ to the language, and let $\bigsqcup_{k}M$ be
the disjoint union of $k$ copies of $M$, one for each predicate,
where each copy has the same structure as $M$. Then $\Aut\left(\bigsqcup_{k}M\right)=\Aut\left(M\right)^{k}$. 
\begin{prop}
\label{prop:product}If $\sigma$ is a (strongly) repulsive automorphism
of an $L$-structure $M$, then $\sigma^{\times k}\in\Aut\left(M\right)^{k}$is
a (strongly) repulsive automorphism of the structure $\bigsqcup_{k}M$
for all $k<\omega$. 
\end{prop}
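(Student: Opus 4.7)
My plan is to reduce the property for $\sigma^{\times k}$ acting on $\bigsqcup_k M$ to the original property for $\sigma$ on $M$. The structural fact to exploit is that $\Aut(\bigsqcup_k M)=\Aut(M)^k$, with each component acting on the corresponding copy (distinguished by the unary predicate $P_i$), so that every partial automorphism $h$ between subsets of $\bigsqcup_k M$ similarly decomposes as $h = h_1\sqcup\cdots\sqcup h_k$, where $h_i$ is a partial automorphism of the $i$-th copy.

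Let $A\subseteq\bigsqcup_k M$ be finite, write $A = A_1\sqcup\cdots\sqcup A_k$ with $A_i$ in the $i$-th copy, and (identifying each copy with $M$) set $A^{\circ} = A_1\cup\cdots\cup A_k\subseteq M$. Applying (strong) repulsiveness of $\sigma$ to $A^{\circ}$ yields an $n$ such that $A^{\circ}\ind^{ns}\sigma^n(A^{\circ})$ and $\sigma^n(A^{\circ})\ind^{ns}A^{\circ}$ in $M$ (for all $m\geq n$ in the strong case). I would then observe that this single $n$ automatically gives $A_i\ind^{ns}\sigma^n(A_i)$ and $\sigma^n(A_i)\ind^{ns}A_i$ in $M$ for each $i$: any partial automorphism of $\sigma^n(A_i)$ in $M$ is a fortiori a partial automorphism of $\sigma^n(A^{\circ})$, so it extends to an automorphism of $M$ fixing $A^{\circ}$ pointwise and in particular $A_i$; the other direction is symmetric. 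The minor subtlety here is obtaining a single $n$ that works for every component simultaneously, which is precisely why one applies repulsiveness to the union $A^{\circ}$ rather than to each $A_i$ separately.

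To verify repulsiveness of $\sigma^{\times k}$ at $A$: given any partial automorphism $h\colon C'\to C''$ of $(\sigma^{\times k})^n(A)$ in $\bigsqcup_k M$, the decomposition $h = h_1\sqcup\cdots\sqcup h_k$ produces partial automorphisms $h_i$ of $\sigma^n(A_i)$ in the $i$-th copy, equivalently in $M$. Each $h_i$ extends via $A_i\ind^{ns}\sigma^n(A_i)$ to some $\tilde\tau_i\in\Aut(M)$ fixing $A_i$ pointwise, and then $(\tilde\tau_1,\ldots,\tilde\tau_k)$ is an automorphism of $\bigsqcup_k M$ extending $h$ and fixing $A$, witnessing $A\ind^{ns}(\sigma^{\times k})^n(A)$. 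The reverse direction $(\sigma^{\times k})^n(A)\ind^{ns}A$ is handled identically, and in the strongly repulsive case the same argument goes through unchanged for all $m\geq n$. There is no substantial obstacle beyond the bookkeeping needed to ensure the choice of $n$ is uniform across the $k$ copies.
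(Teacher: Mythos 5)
Your overall reduction is the same as the paper's: apply repulsiveness of $\sigma$ once in $M$ to a single finite set built from all the components (your $A^{\circ}$; the paper enlarges $A$ to the set $\bigsqcup_{k}A_{0}$ with the \emph{same} finite $A_{0}\subseteq M$ in every copy), so that one $n$ works uniformly, and then transfer the independence componentwise to $\bigsqcup_{k}M$. The gap is in how you certify $\ind^{ns}$ at the end. The relation $A\ind^{ns}B$ is \emph{defined} as non-splitting of $\tp\left(a/b\right)$ over $\emptyset$; the reformulation via extending partial automorphisms, which you use in both directions (to extract $A_{i}\ind^{ns}\sigma^{n}\left(A_{i}\right)$ from $A^{\circ}\ind^{ns}\sigma^{n}\left(A^{\circ}\right)$, and then to conclude $A\ind^{ns}\left(\sigma^{\times k}\right)^{n}\left(A\right)$ from your gluing of the $\tilde{\tau}_{i}$), is stated in the paper only for \emph{homogeneous} $M$, while the proposition is about an arbitrary $L$-structure. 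Concretely, showing that every partial automorphism of $\left(\sigma^{\times k}\right)^{n}\left(A\right)$ extends to an automorphism of $\bigsqcup_{k}M$ fixing $A$ pointwise says nothing about pairs of tuples $c\equiv d$ inside $\left(\sigma^{\times k}\right)^{n}\left(A\right)$ that are \emph{not} conjugate under $\Aut\left(\bigsqcup_{k}M\right)$, and such pairs can exist when $M$ is not homogeneous; for them splitting is not ruled out by your argument. (Your intermediate step also doesn't need automorphisms at all: $A_{i}\ind^{ns}\sigma^{n}\left(A_{i}\right)$ follows from $A^{\circ}\ind^{ns}\sigma^{n}\left(A^{\circ}\right)$ by plain monotonicity of non-splitting, directly from the definition.)

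The repair is to check non-splitting at the level of types rather than automorphisms: for tuples lying inside the copies, a type in $\bigsqcup_{k}M$ (also over parameters from the copies) is determined by the types in $M$ of the component subtuples together with the distribution among the predicates $P_{i}$ (relativization/Feferman--Vaught for disjoint unions). With that, $c\equiv d$ in $\bigsqcup_{k}M$ gives $c_{i}\equiv d_{i}$ in $M$, componentwise non-splitting gives $c_{i}\equiv_{A_{i}}d_{i}$, and reassembling gives $c\equiv_{A}d$; this is exactly the content of the transfer fact the paper's proof invokes, and it needs no homogeneity. Alternatively, your argument is complete as written if one adds the hypothesis that $M$ is homogeneous (then $\bigsqcup_{k}M$ is homogeneous too), which suffices for the paper's application to $\omega$-categorical structures but not for the proposition as stated.
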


\begin{proof}
Suppose that $A\subseteq\bigsqcup_{k}M$ is finite. Then we may assume,
enlarging $A$, that $A=\bigsqcup_{k}A_{0}$ for some finite $A_{0}\subseteq M$
(i.e., the disjoint union of the same set in the different predicates).
Thus the proposition follows from the fact that if $A_{0}\ind^{ns}B_{0}$
in $M$, then $\bigsqcup_{k}A_{0}\ind^{ns}\bigsqcup_{k}B_{0}$ in
$\bigsqcup_{k}M$, which is clear. 
\end{proof}
A repulsive automorphism is a special case of a topologically transitive
map:
\begin{defn}
Suppose that $X$ is a topological space. A map $f:X\to X$ is called
\emph{topologically transitive} if for every two nonempty open sets
$U,V\subseteq X$, there is some $n<\omega$ such that $f^{n}\left(U\right)\cap V\neq\emptyset$. 
\end{defn}

\begin{lem}
Suppose that $M$ is a countable structure and that $\sigma\in G=\Aut\left(M\right)$
is repulsive. Then conjugation by $\sigma$ in $G$ is topologically
transitive.
\end{lem}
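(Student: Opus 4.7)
The topology on $G=\Aut(M)$ has basic open sets $U_{a,b}=\{g\in G:g(\bar{a})=\bar{b}\}$ for finite tuples $\bar{a},\bar{b}$ from $M$, and $U_{a,b}\neq\emptyset$ is equivalent to $\bar{a}\equiv\bar{b}$. Fix two nonempty basic opens $U=U_{a,b}$ and $V=U_{c,d}$. For conjugation $c_\sigma\colon g\mapsto\sigma g\sigma^{-1}$, topological transitivity at $(U,V)$ means that $c_\sigma^n(U)\cap V\neq\emptyset$ for some $n$. Applying $c_\sigma^{-n}$, this is the same as producing $n\in\omega$ and $h\in V$ with $\sigma^{-n}h\sigma^n\in U$, i.e.\ $h(\bar{c})=\bar{d}$ \emph{and} $h(\sigma^n\bar{a})=\sigma^n\bar{b}$. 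By homogeneity of $M$, such an $h$ exists iff $(\bar{c},\sigma^n\bar{a})\equiv(\bar{d},\sigma^n\bar{b})$.

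\textbf{Main step using repulsiveness.} Let $F\subseteq M$ be a finite set containing all entries of $\bar{a},\bar{b},\bar{c},\bar{d}$. Apply repulsiveness of $\sigma$ to $F$ to pick $n$ satisfying both $F\ind^{ns}\sigma^n(F)$ and $\sigma^n(F)\ind^{ns}F$. The desired type-equality now decomposes into two non-splitting applications. First, $\bar{a}\equiv\bar{b}$ (since $U\neq\emptyset$) gives $\sigma^n\bar{a}\equiv\sigma^n\bar{b}$; since both tuples lie inside $\sigma^n(F)$, the hypothesis $F\ind^{ns}\sigma^n(F)$ upgrades this to $\sigma^n\bar{a}\equiv_F\sigma^n\bar{b}$, hence $(\bar{c},\sigma^n\bar{a})\equiv(\bar{c},\sigma^n\bar{b})$. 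Second, $\bar{c}\equiv\bar{d}$ (since $V\neq\emptyset$) and $\bar{c},\bar{d}\in F$, so the other half $\sigma^n(F)\ind^{ns}F$ yields $\bar{c}\equiv_{\sigma^n(F)}\bar{d}$, hence $(\bar{c},\sigma^n\bar{b})\equiv(\bar{d},\sigma^n\bar{b})$. Chaining gives $(\bar{c},\sigma^n\bar{a})\equiv(\bar{d},\sigma^n\bar{b})$, finishing the proof.

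\textbf{Expected difficulty.} The argument is essentially mechanical once set up in this form. The only real subtlety is choosing the direction of conjugation correctly: one should chase $c_\sigma^{-n}(V)\cap U$ rather than $c_\sigma^n(U)\cap V$, so that $\sigma^n$ is applied to the $U$-coordinates $\bar{a},\bar{b}$ while $\bar{c},\bar{d}$ remain on the $F$-side. This matches exactly the asymmetric shape of non-splitting and of the repulsiveness hypothesis, and both halves $F\ind^{ns}\sigma^n(F)$ and $\sigma^n(F)\ind^{ns}F$ are genuinely used, one for each link of the chain. Strong repulsiveness plays no role here (any single good $n$ suffices), which is consistent with the weaker hypothesis in the lemma.
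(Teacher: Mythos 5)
Your proof is correct and follows essentially the same route as the paper: reduce transitivity to the single type equality $(\bar c,\sigma^n\bar a)\equiv(\bar d,\sigma^n\bar b)$ and obtain it by chaining the two halves of repulsiveness applied to a finite set containing $\bar a,\bar b,\bar c,\bar d$, one half for each link. The only difference is bookkeeping (you pull $V$ back by $c_\sigma^{-n}$ where the paper pushes $U$ forward), and your explicit appeal to homogeneity of $M$ is the same step the paper uses implicitly when identifying nonemptiness of $U_{\sigma^n(a),\sigma^n(b)}\cap U_{c,d}$ with the type equality.
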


\begin{proof}
Denote by $f:G\to G$ the conjugation by $\sigma$. Suppose that $U,V$
are two nonempty basic open subsets of $G$, i.e., $U=U_{a,b}=\set{\tau\in G}{\tau\left(a\right)=b}$
(where $a,b$ are finite tuples) and $V=U_{c,d}$. Note that $f\left(U_{a,b}\right)=U_{\sigma\left(a\right),\sigma\left(b\right)}$.
So we need to find some $n<\omega$ such that $U_{\sigma^{n}\left(a\right),\sigma^{n}\left(b\right)}\cap U_{c,d}$
is nonempty. This means that we need to show that for some $n<\omega$,
$\sigma^{n}\left(a\right)c\equiv\sigma^{n}\left(b\right)d$. As $\sigma$
is repulsive, there is some $n<\omega$ such that $cd\ind^{ns}\sigma^{n}\left(ab\right)$
and $\sigma^{n}\left(ab\right)\ind^{ns}cd$. Since both $U,V\neq\emptyset$,
$a\equiv b$ and $c\equiv d$, and thus $\sigma^{n}\left(a\right)c\equiv\sigma^{n}\left(b\right)c\equiv\sigma^{n}\left(b\right)d$. 
\end{proof}
\begin{fact}
\label{fact:TT implies DO}\cite[Proposition 1.1]{MR1159963} If $X$
is second countable (i.e., have a countable basis) of second category
(i.e., not meager) and separable, and $f:X\to X$ is topologically
transitive, then for some $x\in X$, $\set{f^{n}\left(x\right)}{n<\omega}$
is dense. 

In fact, it follows from the proof there that the set of such $x$'s
is comeager. 
\end{fact}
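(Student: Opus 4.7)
The plan is a standard Baire category argument. Fix a countable basis $\{U_n \mid n<\omega\}$ of $X$ and set $V_n = \bigcup_{k\geq 0} f^{-k}(U_n)$. A point $x \in X$ has dense forward orbit if and only if $\{f^k(x)\mid k<\omega\}$ meets every basic open set, which is exactly the condition $x \in \bigcap_{n}V_n$. So it suffices to prove that each $V_n$ is open and dense: the set of points with dense forward orbit is then the countable intersection $\bigcap_n V_n$, which, because $X$ is of second category, is comeager, and in particular nonempty.

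Openness of $V_n$ is immediate once we know $f$ (and hence each iterate $f^k$) is continuous; in the application to conjugation by $\sigma$ on $G$ this is automatic since conjugation is a homeomorphism. Density of $V_n$ is just a restatement of topological transitivity: given any nonempty open $W\subseteq X$, pick $k$ with $f^k(W)\cap U_n\neq\emptyset$; then $W\cap f^{-k}(U_n)\neq\emptyset$, so $W\cap V_n\neq\emptyset$.

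There is really no obstacle here — the content of the statement is the observation that \emph{having dense forward orbit} is the countable intersection of open conditions ``orbit meets $U_n$'', each of which is generic exactly when $f$ is topologically transitive. The only things to watch are (i) that we produce a dense \emph{forward} orbit rather than a dense two-sided orbit, which is what the statement asks for, and (ii) that the hypothesis ``second category'' (rather than ``completely metrizable'') is strong enough for the Baire-style conclusion to go through, which it is since we only need that a countable intersection of dense open sets is nonempty (indeed comeager), and this is exactly the non-meagerness assumption together with the elementary fact that the complement of such an intersection is a countable union of nowhere dense sets.
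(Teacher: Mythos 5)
Your proof is correct and takes essentially the same route as the paper: the paper's argument covers the set of points without dense forward orbit by the closed nowhere dense sets $X\setminus\bigcup_{k<\omega}f^{-k}(U)$ for $U$ ranging over a countable basis, which is exactly the complement of your presentation of the dense-orbit set as $\bigcap_{n}V_{n}$ with $V_{n}=\bigcup_{k<\omega}f^{-k}(U_{n})$ open (by continuity of $f$) and dense (by transitivity). Your added remarks on forward orbits, continuity, and the role of second category are fine and require no changes.
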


\begin{proof}
 Since it is not written explicitly in \cite{MR1159963}, we provide
a proof of the last statement (based on the proof from there). Consider
the set $F$ of $x\in X$ such that $\set{f^{n}\left(x\right)}{n<\omega}$
is not dense. Fix some countable basis $\cal V$ of open sets. For
each $x\in F$, there is some $U_{x}\in\cal V$ such that $f^{n}\left(x\right)\notin U_{x}$
for all $n$. Now, $\bigcup\set{f^{-n}\left(U_{x}\right)}{n<\omega}$
is open and dense since $f$ is topologically transitive. Hence, its
complement $A_{U_{x}}$ is closed, nowhere dense and contains $x$.
The union $\bigcup\set{A_{U_{x}}}{x\in F}$ is a countable union which
contains $F$, hence $F$ is meager. 
\end{proof}
In our case, $G=\Aut\left(M\right)$ for a countable model $M$ is
of second category, since it is even Polish. Hence we immediately
get the following corollary.
\begin{cor}
\label{cor:repulsive implies cyclically dense}Suppose that $M$ is
a countable structure. Suppose that $G=\Aut\left(M\right)$ contains
a repulsive automorphism $\sigma$. Then $G$ is topologically 2-generated
and moreover has a cyclically dense conjugacy class, and even: the
set of $\tau$ for which $\set{\sigma^{n}\tau\sigma^{-n}}{n\in\Nn}$
is dense is comeager. 
\end{cor}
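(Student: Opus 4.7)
The proof is essentially an immediate application of the preceding Lemma together with Fact \ref{fact:TT implies DO}. First, I would invoke that Lemma to conclude that the conjugation map $f \colon G \to G$ defined by $f(\tau) = \sigma\tau\sigma^{-1}$ is topologically transitive. Next I would verify that $(G,f)$ satisfies the hypotheses of Fact \ref{fact:TT implies DO}: since $M$ is countable, $G = \Aut(M)$ is a Polish group in the pointwise convergence topology (a closed subgroup of the Polish group of all permutations of $M$), hence separable, second countable, and of second category by the Baire category theorem.

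Applying Fact \ref{fact:TT implies DO} to $(G,f)$ then produces a comeager set $R \subseteq G$ such that for every $\tau \in R$ the forward orbit $\set{\sigma^{n}\tau\sigma^{-n}}{n < \omega}$ is dense in $G$. This gives the final assertion of the corollary directly. For the other two conclusions, fix any $\tau \in R$. The closure of the subgroup generated by $\sigma$ and $\tau$ contains every $\sigma^{n}\tau\sigma^{-n}$, hence it equals $G$; so $G$ is topologically $2$-generated. Setting $f_{1} = \sigma^{-1}$ and $f_{2} = \tau$ one checks that $f_{1}^{-n} f_{2} f_{1}^{n} = \sigma^{n}\tau\sigma^{-n}$, so the set $\set{f_{1}^{-n} f_{2} f_{1}^{n}}{n \in \Zz}$ contains the already-dense subset indexed by $n \geq 0$; in particular $G$ has a cyclically dense conjugacy class.

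There is essentially no obstacle beyond confirming the topological hypotheses on $G$; the combinatorial content has already been isolated in the definition of a repulsive automorphism and the preceding Lemma. The only minor subtlety is matching the index set $\mathbb{N}$ delivered by Fact \ref{fact:TT implies DO} with the index set $\Zz$ appearing in the definition of a cyclically dense conjugacy class, which is handled by the inclusion of sets above.
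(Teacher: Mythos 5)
Your proposal is correct and follows exactly the paper's route: the preceding Lemma gives topological transitivity of conjugation by $\sigma$, and since $G=\Aut(M)$ is Polish (hence separable, second countable, and of second category), Fact \ref{fact:TT implies DO} yields the comeager set of $\tau$ with dense orbit, from which 2-generation and the cyclically dense conjugacy class follow as you describe. The paper states this more tersely ("we immediately get the corollary"), but your spelled-out verification, including the sign bookkeeping with $f_{1}=\sigma^{-1}$, matches its intended argument.
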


An alternative, more direct proof is as follows. Assume that $\sigma$
is a repulsive automorphism, and construct $\tau\in\Aut\left(M\right)$
by back-and-forth, so that $\set{\sigma^{-n}\tau\sigma^{n}}{n\in\Zz}$
is dense in $G$.  We leave the details as an exercise.  We also
point out that according to \cite[Lemmas 5.1 and 5.2]{Kwiatkowska2016},
the fact that this set of $\tau$ is comeager actually follows immediately
from the fact that there is one such $\tau$. 

Now we turn to the question of finding a repulsive automorphism. 
\begin{defn}
\label{def:groovy}Suppose that $M$ is a countable structure. A ternary
relation $\ind$ on finite subsets of $M$, invariant under $\Aut\left(M\right)$
is a \emph{canonical} independence relation (CIR) if it satisfies
the following properties for all finite sets $A,B,C,D$:
\begin{itemize}
\item (Stationarity over $\emptyset$) If $A\ind B$, $A'\ind B'$, $a,b,a',b'$
are tuples enumerating $A,B,A',B'$, and $a\equiv a'$, $b\equiv b'$
then $ab\equiv a'b'$. Note that this (together with monotonicity,
see below) implies non-splitting: if $A\ind B$ then $A\ind^{ns}B$
and $B\ind^{ns}A$ (where $A\ind B$ means $A\ind_{\emptyset}B$).
\item (Extension (on the right)) If $A\ind_{C}B$ then for all finite tuples
$d$ there is some $d'\equiv_{BC}d$ such that $A\ind_{C}B\cup d'$. 
\item (Transitivity on both sides) If $A\ind_{DC}B$ then if $D\ind_{C}B$
then $AD\ind_{C}B$ and if $A\ind_{C}D$ then $A\ind_{C}BD$ ($DC$
means $D\cup C$). 
\item (Monotonicity) If $A\ind_{C}B$ and $A'\subseteq A$, $B'\subseteq B$
then $A'\ind_{C}B'$. 
\item (Existence) $A\ind_{C}C$ and $C\ind_{C}A$. 
\end{itemize}
For finite tuples $a,b,c$ enumerating sets $A,B,C$ respectively,
write $a\ind_{c}b$ for $A\ind_{C}B$. 

Note that we do not ask for symmetry nor for base monotonicity (if
$A\ind_{C}BD$ then $A\ind_{CD}B$). 

We say that a CIR is \emph{defined on finitely generated substructures}
if for all finite $A,B,C,A',B',C'\subseteq M$, if $\left\langle A\right\rangle =\left\langle A'\right\rangle $,
$\left\langle B\right\rangle =\left\langle B'\right\rangle $ and
$\left\langle C\right\rangle =\left\langle C'\right\rangle $ then
$A\ind_{C}B$ iff $A'\ind_{C'}B'$. ($\left\langle A\right\rangle $
is the substructure generated by $A$). 
\end{defn}

\begin{rem}
\label{rem:CIR on f.g. substructures} If a CIR is defined on finitely
generated substructures, then it naturally induces a relation $\ind^{*}$
whose domain is finitely generated substructures by setting $\left\langle A\right\rangle \ind_{\left\langle C\right\rangle }^{*}\left\langle B\right\rangle $
iff $A\ind_{C}B$. The relation $\ind^{*}$ satisfies the natural
variants of Definition \ref{def:groovy}. For example, transitivity
to the left becomes: for all finitely generated substructures $A,B,C,D$,
if $A\ind_{\left\langle DC\right\rangle }^{*}B$ and $D\ind_{C}^{*}B$
then $\left\langle AD\right\rangle \ind_{C}^{*}B$. Similarly, extension
becomes: if $A\ind_{C}^{*}B$ then for all finite tuples $d$ there
is some $d'\equiv_{BC}d$ such that $A\ind_{C}^{*}\left\langle Bd'\right\rangle $.

On the other hand, if we have a relation $\ind^{*}$ satisfying these
natural properties on finitely generated substructures of $M$, then
there is also a CIR defined on finitely generated substructures: define
$A\ind_{C}B$ iff $\left\langle A\right\rangle \ind_{\left\langle C\right\rangle }^{*}\left\langle B\right\rangle $.
\end{rem}

\begin{thm}
\label{thm:existence of repulsive automorphism-ultrahomogeneous}
Assume that $\ind$ is a CIR defined on finitely generated substructures
of an ultrahomogeneous structure $M$. Then there is a strongly repulsive
automorphism in $\Aut\left(M\right)$. 
\end{thm}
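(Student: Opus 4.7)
The plan is to construct $\sigma\in\Aut(M)$ by a back-and-forth argument in which the orbit structure of $\sigma$ is a bi-infinite Morley-style sequence $(B_{i})_{i\in\Zz}$ of finitely generated substructures: $\sigma$ will map $B_{i}$ onto $B_{i+1}$ via a fixed isomorphism, $\bigcup_{i\in\Zz}B_{i}=M$, and the sequence will satisfy $B_{i}\ind\bigcup_{j<i}B_{j}$ for each $i$ (understood via Remark~\ref{rem:CIR on f.g. substructures}). The key consequence of this Morley property, combined with stationarity and a short induction on the length of the tuple, is that $(B_{i})$ is indiscernible, so the shift $B_{i}\mapsto B_{i+1}$ is a coherent partial isomorphism between finitely generated substructures that extends to an automorphism of $M$ by ultrahomogeneity.

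To implement this, fix an enumeration $M=\{c_{0},c_{1},\ldots\}$ and build finite approximations $\sigma_{n}$ encoded by a finite window $B_{-N_{n}}^{(n)},\ldots,B_{M_{n}}^{(n)}$ of blocks satisfying the Morley invariant on the window. At each stage we interleave three tasks: (a) enlarge the window so that $N_{n},M_{n}\to\infty$; (b) back-and-forth: guarantee $c_{n}$ lies in the domain and range of the eventual $\sigma$ by placing it in some block $B_{i_{0}}^{(n+1)}$ and enlarging every other block $B_{i}^{(n+1)}$ by a suitable ``$\sigma^{i-i_{0}}$-image''; (c) preserve the Morley invariant. The workhorse for (a) on the right and for (c) is the extension axiom of the CIR, which lets us place a fresh block with the correct (stationarity-prescribed) type independent from everything already built. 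Extending the window to the left is more delicate, because the CIR has only right extension; here one uses that stationarity together with the already-known Morley property uniquely pins down the type of the ``prepended'' block over the rest, and realizes it in $M$ by ultrahomogeneity. Task (b) proceeds similarly, choosing the new elements in the blocks one index at a time by right extension so that the enlarged blocks still satisfy the Morley invariant.

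Once $\sigma$ is constructed, strong repulsiveness is quick to verify: any finite $A\subseteq M$ lies in a finite union $B_{-k}\cup\cdots\cup B_{k}$, and for $m>2k$ the image $\sigma^{m}(A)$ lies in blocks of index at least $-k+m>k$. The Morley invariant, together with iterated left transitivity, yields $(B_{-k+m},\ldots,B_{k+m})\ind(B_{-k},\ldots,B_{k})$ at the block level, and monotonicity gives $\sigma^{m}(A)\ind A$. By the note in Definition~\ref{def:groovy}, $\sigma^{m}(A)\ind A$ implies both $A\ind^{ns}\sigma^{m}(A)$ and $\sigma^{m}(A)\ind^{ns}A$, which is exactly what is needed for all $m\geq n:=2k+1$.

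The main obstacle I expect is task (b) combined with the asymmetry of the CIR. When a single element is added to one block to honor the back-and-forth enumeration, every other block must be simultaneously enlarged by its $\sigma^{\pm j}$-image, and all these new elements must be chosen to preserve the Morley invariant. Because only right extension is available, the correct order of choices is block-by-block from left to right, relying on stationarity to propagate type information back through the window and on ultrahomogeneity to realize the prescribed types inside $M$. In short, the technical heart of the proof is a careful interleaving of right extension, stationarity, and ultrahomogeneity to keep the Morley invariant intact across all three kinds of updates; everything else, including the final repulsiveness verification, is bookkeeping.
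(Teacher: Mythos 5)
Your target orbit structure is too strong and in general simply does not exist, so the construction cannot get off the ground. If $M$ were covered by blocks $\left(B_{i}\right)_{i\in\Zz}$ with $\bigcup_{i}B_{i}=M$, $\sigma\left(B_{i}\right)=B_{i+1}$ and $B_{i}\ind\bigcup_{j<i}B_{j}$ for all $i$, then by monotonicity (applied to finite pieces at finite stages) any two distinct blocks would be independent over $\emptyset$. Take $M$ to be the random graph with the free-amalgamation CIR of Example \ref{exa:free amalgamation classes}: independence over $\emptyset$ means disjointness and no edges across, so there would be no edge between distinct blocks; since all blocks are nonempty ($\sigma$-images of one another, with union $M$), two vertices lying in different blocks would have no common neighbour, contradicting the extension property of the random graph. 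The random tournament with its CIR fails similarly (all arrows between blocks would have to point one way, contradicting the extension axioms). So for these structures \emph{no} automorphism has the orbit structure you posit, although the theorem holds for them: strong repulsiveness only asks that $A$ and $\sigma^{n}\left(A\right)$ become independent for large $n$ (both sets move apart), which is far weaker than a fixed $\sigma$-invariant partition into pairwise independent blocks. This is exactly why the paper indexes the pieces of its construction by \emph{intervals} of integers and requires independence of two pieces only over the piece indexed by the overlap of the two intervals (condition (4) in its proof); distinct ``columns'' are never required to be outright independent, and a finite set is separated from its far shifts only because disjoint intervals have disjoint aggregates.

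A second, related gap is your step (b). The element $c_{n}$ you must absorb is a \emph{prescribed} element of $M$, and its type over the current window is whatever it is; no CIR axiom lets you fit it into a block while preserving the invariant. Right extension only permits replacing an auxiliary point $d$ by some $d'\equiv_{BC}d$ of your choosing; it never applies to a point handed to you (already in the free-amalgamation case a given $c_{n}$ with edges into two different blocks can be placed nowhere). The paper avoids this issue entirely: it constructs an abstract shift-invariant quantifier-free type by a finitary induction plus compactness, uses the amalgamation witnesses built into the construction (condition (3)) to ensure the resulting countable structure is ultrahomogeneous with age $\Age\left(M\right)$, and then identifies it with $M$ by uniqueness of the Fra\"iss\'e limit; surjectivity onto $M$ comes from that identification, not from absorbing given elements, and the shift automorphism comes from the built-in translation invariance of the types. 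Your indiscernibility-from-stationarity argument and your final verification of strong repulsiveness are fine in themselves, but they rest on a configuration that cannot be produced, so the proposal does not yield a proof.
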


\begin{proof}
Let $S$ be the set of all closed nonempty intervals of integers,
i.e., sets of the form $\left[i,j\right]$ for $i\leq j$ from $\Zz$.
For every finite set $s\in S$ we attach a countable tuple of variables
$\bar{x}_{s}=\sequence{x_{s,i}}{i<\omega}$ in such a way that if
$t\neq s\in S$ then $\bar{x}_{s}\cap\bar{x}_{t}=\emptyset$. For
$s\in S$, let $\bar{y}_{s}=\bigcup\set{\bar{x}_{t}}{t\in S,t\subseteq s}$,
and $\bar{y}=\bigcup\set{\bar{x}_{s}}{s\in S}=\bigcup\set{\bar{y}_{s}}{s\in S}$.
For a $\bar{y}$-tuple ($\bar{y}_{s}$-tuple) $\bar{a}$ and $t\in S$
(contained in $s$), we write $\bar{a}\upharpoonright t$ for $\bar{a}\restriction\bar{y}_{t}$
and similarly for a type in $\bar{y}$ ($\bar{y}_{s}$). 

By Fact \ref{fact:Fraisse limits}, the age of $M$, denoted by $K$,
has HP, JEP and AP. Fix an enumeration $\sequence{\left(A_{l},B_{l}\right)}{l<\omega}$
of all pairs $A,B\in K$ such that $A\subseteq B\subseteq M$, including
the case $A=\emptyset$.

For every $1\leq n<\omega$, we construct a complete quantifier free
type $r_{n}\left(\bar{y}_{\left[0,n-1\right]}\right)\in S^{\qf}\left(\emptyset\right)$
and a sequence $\sequence{f_{n,l,i}}{l,i<\omega}$ such that:
\begin{enumerate}
\item \label{enu:model}If $\bar{a}\models r_{n}$ then $\bar{a}$ is enumerates
a finitely generated substructure $A\in K$ and for a fixed $l<\omega$,
$\sequence{f_{n,l,i}}{i<\omega}$ enumerates a countable set of functions
that contains all embeddings of $A_{l}$ into $A$. (Formally, $f_{n,l,i}$
is a function from $A_{l}$ into the variables $\bar{y}_{\left[0,n-1\right]}$
which the type $r_{n}$ ``thinks'' is an embedding.)
\item \label{enu:all intervals are the same}If $0<m<n$ then for every
interval $s\in S$ with $s\subseteq n$ such that $\left|s\right|=m$,
$r_{n}\restriction s=r_{m}\left(\bar{y}_{s}\right)$.
\item \label{enu:AP} If $\bar{a}\models r_{n}$ then for every $0<m<n$,
every pair from $\set{\left(A_{l},B_{l}\right)}{l<n-1}$ and every
$f\in\set{f_{m,l,i}}{l,i<n-1}$ (so $f$ is an embedding of $A_{l}$
into the structure enumerated by $\bar{a}\restriction\left[0,m-1\right]$),
 there is an embedding $g$ of $B_{l}$ into the structure enumerated
by $\bar{a}$ such that $g$ extends $f$. 
\item \label{enu:independence}If $s,t$ are two intervals contained in
$n$ such that $\min s\leq\min t$ and $\bar{a}\models r_{n}$ then
$\bar{a}\restriction s\ind_{\bar{a}\restriction s\cap t}^{*}\bar{a}\restriction t$
(see Remark \ref{rem:CIR on f.g. substructures}). 
\end{enumerate}
How? 

For $n=1$, let $r_{1}\left(\bar{y}_{\left\{ 0\right\} }\right)$
be a complete quantifier free type of a tuple enumerating some $D_{0}\in K$.
 Note that it trivially satisfies all the assumptions. 

Suppose we found $r_{n}$ satisfying all the properties and we construct
$r_{n+1}$. Let $\bar{a}\models r_{n}$ from $M$. Let $f:\bar{a}\restriction\left[0,n-2\right]\to\bar{a}\restriction\left[1,n-1\right]$
be defined by setting $f\left(a_{s,i}\right)=a_{s+1,i}$ for all $s\in S$
contained in $\left[0,n-2\right]$ (in general, $s+n$ is the translation
of $s$ by $n$). It is an isomorphism since both $\bar{a}\restriction\left[0,n-2\right]$
and $\bar{a}\restriction\left[1,n-1\right]$ realize $r_{n-1}$. By
the homogeneity of $M$, we can extend $\bar{a}\restriction\left[1,n-1\right]$
to some tuple $\bar{a}'$ enumerated by $\bar{y}_{\left[1,n\right]}$,
and extend $f$ to an isomorphism $f':\bar{a}\to\bar{a}'$ such that
$f'\left(a_{s,i}\right)=a_{s+1,i}$ for all $s\in S$ contained in
$\left[0,n-1\right]$. (When $n=1$, $f=\emptyset$ and $\bar{a}'$
is just an isomorphic copy of $\bar{a}$.)

By existence, we have that $\bar{a}\ind_{\bar{a}\restriction\left[1,n-1\right]}^{*}\bar{a}\restriction\left[1,n-1\right]$.
By extension, we may assume that $\bar{a}\ind_{\bar{a}\restriction\left[1,n-1\right]}^{*}\bar{a}'$.
Let $\bar{a}''$ be a $\bar{y}_{n+1}$-tuple containing $\bar{a}\bar{a}'$
that contains witnesses to all the relevant pairs and embeddings from
(\ref{enu:AP}), ordered in such a way that $\bar{a}$ enumerates
the $\bar{y}_{n}$-part and $\bar{a}'$ enumerates the $\bar{y}_{\left[1,n\right]}$-part
(so the remaining parts of $\bar{a}''$ are enumerated by $\bar{x}_{\left[0,n\right]}$).
This can be done since $K$ has AP and JEP. Let $r_{n+1}=\tp\left(\bar{a}''\right)$. 

Now we have to check that (1)\textendash (4) hold. We prove this by
induction on $n$. 

(\ref{enu:model}) and (\ref{enu:AP}) are clear by construction and
the induction hypothesis. (\ref{enu:all intervals are the same})
follows by the choice of $f$ and $f'$. 

Let us prove (\ref{enu:independence}), so fix $\bar{a}\models r_{n+1}$.
By monotonicity it is enough to prove that $\bar{a}\upharpoonright\left[0,k-1\right]\ind_{\bar{a}\restriction\left[m,k-1\right]}^{*}\bar{a}\restriction\left[m,n\right]$
for any $m\leq n,1\leq k\leq n+1$. We may assume that $1\leq m$
and $k\leq n$ (otherwise this is true by existence).

Note that $\bar{a}\restriction\left[0,k-1\right]\ind_{\bar{a}\restriction\left[1,n-1\right]}^{*}\bar{a}\restriction\left[m,n\right]$
by construction (and monotonicity). By induction, $\bar{a}\restriction\left[1,n-1\right]\ind_{\bar{a}\restriction\left[m,n-1\right]}^{*}\bar{a}\restriction\left[m,n\right]$.
Hence by transitivity and monotonicity we have that $\bar{a}\restriction\left[0,k-1\right]\ind_{\bar{a}\restriction\left[m,n-1\right]}^{*}\bar{a}\restriction\left[m,n\right]$.
By induction we have that $\bar{a}\upharpoonright\left[0,k-1\right]\ind_{\bar{a}\restriction\left[m,k-1\right]}^{*}\bar{a}\restriction\left[m,n-1\right]$.
By applying transitivity (and monotonicity) again, we get that $\bar{a}\upharpoonright\left[0,k-1\right]\ind_{\bar{a}\restriction\left[m,k-1\right]}^{*}\bar{a}\restriction\left[m,n\right]$.
This finishes the proof of (4). 

By compactness, we can find a $\bar{y}$-tuple $\bar{b}$, such that
for every $s\in S$, $\bar{b}\restriction s\models r_{\left|s\right|}$.
Note that $\bar{b}$ enumerates a Fra\"iss\'e limit $N$ by (\ref{enu:AP}).
This is as in the proof of \cite[Theorem 7.1.2]{Hod}. More precisely,
by JEP, the age of $N$ is $K$ (given $A\in K$, by JEP there is
some $B$ such that $\left(D_{0},B\right)$ is one of $\left(A_{l},B_{l}\right)$
and $A\subseteq B$, so this is taken care of in the construction).
In addition, by \cite[Lemma 7.1.4 (b)]{Hod}, $N$ is ultrahomogeneous.

Hence by uniqueness of the Fra\"iss\'e limit (see Fact \ref{fact:Fraisse limits}),
we may assume that $\bar{b}$ enumerates $M$. Let $\sigma:M\to M$
be defined by $\sigma\left(b_{s,i}\right)=b_{s+1,i}$. By construction,
$\sigma$ is an automorphism. Now, any finite subset of $M$ is contained
in $\bar{b}\restriction s$ for some $s\in S$. Then for some $m<\omega$,
$s+n\cap s=\emptyset$ for all $n\geq m$, so $\sigma^{n}\left(\bar{b}\upharpoonright s\right)=\bar{b}\restriction\left(s+n\right)$
satisfies that $\sigma^{n}\left(\bar{b}\upharpoonright s\right)\ind^{*}\bar{b}\upharpoonright s$
and so as $\ind^{*}$ implies non-splitting, $\sigma$ is indeed strongly
repulsive.
\end{proof}

\begin{cor}
\label{cor:CIR in omega-cat implies repulsive}Suppose that $M$ is
a countable $\omega$-categorical $L$-structure, and that $\ind$
is a CIR on $M$. Then there is a strongly repulsive automorphism
$\sigma\in\Aut\left(M\right)$. 
\end{cor}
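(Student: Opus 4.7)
The plan is to reduce the $\omega$-categorical case to the ultrahomogeneous case already handled by Theorem \ref{thm:existence of repulsive automorphism-ultrahomogeneous} via a Morleyization argument. Let $L^{*}$ be the expansion of $L$ obtained by adding, for each $L$-formula $\varphi(\bar{x})$, a new relation symbol $R_{\varphi}$ of the appropriate arity, and let $M^{*}$ be the $L^{*}$-structure on the same universe as $M$ with $R_{\varphi}$ interpreted as $\varphi(M)$. Then $M^{*}$ has quantifier elimination by construction, $\Aut(M^{*})=\Aut(M)$, and $M^{*}$ remains countable and $\omega$-categorical (same orbits on $n$-tuples, by Ryll-Nardzewski). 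Because a theory in a purely relational language with quantifier elimination is automatically ultrahomogeneous on its countable model, $M^{*}$ is ultrahomogeneous; and since $L^{*}$ is relational, the substructure generated by any finite $A\subseteq M^{*}$ is just $A$ itself.

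Next I would verify that $\ind$, originally given on $M$, remains a CIR on $M^{*}$, and that it is trivially ``defined on finitely generated substructures'' in the sense of Definition \ref{def:groovy}, since in $M^{*}$ finitely generated substructures coincide with finite subsets. The verification is immediate: invariance under $\Aut(M^{*})=\Aut(M)$ is given; stationarity over $\emptyset$, extension, transitivity, monotonicity and existence are all properties stated in terms of complete types over parameters, and $M$ and $M^{*}$ have literally the same complete types since they have the same $\emptyset$-definable sets in every arity.

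With this in place I would apply Theorem \ref{thm:existence of repulsive automorphism-ultrahomogeneous} to $M^{*}$ and the relation $\ind$ to obtain a strongly repulsive $\sigma\in\Aut(M^{*})$. The final point is to observe that ``strongly repulsive'' is a property formulated purely in terms of $\ind^{ns}$, which itself depends only on the action of the automorphism group; since $\Aut(M^{*})=\Aut(M)$, the automorphism $\sigma$ is strongly repulsive as an element of $\Aut(M)$ as well.

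The only genuine obstacle is the bookkeeping in the previous paragraph, namely confirming that the various clauses of Definition \ref{def:groovy} are invariant under Morleyization; this is not really difficult because every clause is phrased in terms of $\emptyset$-types (through $\equiv$) and the ternary relation $\ind$ itself, both of which are preserved. No new combinatorial content is needed beyond Theorem \ref{thm:existence of repulsive automorphism-ultrahomogeneous}.
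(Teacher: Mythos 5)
Your argument is correct and is essentially the paper's own proof: the paper replaces $L$ by predicates naming the $\Aut(M)$-orbits on $M^{n}$, which in the $\omega$-categorical setting is interdefinable with your Morleyization, yielding a relational, ultrahomogeneous structure with the same automorphism group on which $\ind$ is automatically defined on finitely generated substructures, and then applies Theorem \ref{thm:existence of repulsive automorphism-ultrahomogeneous}. The only caveat is that your parenthetical ``relational plus quantifier elimination implies ultrahomogeneity of the countable model'' needs the $\omega$-categoricity (homogeneity) you have already established, not QE alone.
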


\begin{proof}
For all $n<\omega$ and $a\in M^{n}$, let $R_{a}\subseteq M^{n}$
be the orbit of $a$ under $G=\Aut\left(M\right)$. Let $L'=\set{R_{a}}{a\in M^{n},n<\omega}$
and let $M'$ be the $L'$-structure induced by $M$. Then $M'$ has
the same definable sets as $M$, has quantifier elimination and is
ultrahomogeneous. Now, $\ind$ is still a CIR on finite subsets of
$M'$. Moreover, substructures of $M'$ are subsets since $L'$ is
relational, so $\ind$ is defined on finitely generated substructures.
Thus we may apply Theorem \ref{thm:existence of repulsive automorphism-ultrahomogeneous}
to get a strongly repulsive automorphism $\sigma$ of $M'$. However,
$\Aut\left(M\right)=\Aut\left(M'\right)$ and $\sigma$ is strongly
repulsive as an automorphism of $M'$ as well. 
\end{proof}
From Corollary \ref{cor:repulsive implies cyclically dense} and Corollary
\ref{cor:CIR in omega-cat implies repulsive} we get:
\begin{cor}
\label{cor:having a groovy implies dense}If $M$ is a countable model
of an $\omega$-categorical theory which has a canonical independence
relation then $G=\Aut\left(M\right)$ has a cyclically dense conjugacy
class. In fact, there is some $f\in G$ such that the set of $g\in G$
for which {[}$\set{f^{n}gf^{-n}}{n\in\Zz}$ is dense{]} is comeager. 

Furthermore, by Proposition \ref{prop:product}, the same is immediately
true for $G^{n}$ for any $n<\omega$. 
\end{cor}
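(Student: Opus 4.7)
The plan is to obtain this as an immediate composition of the two preceding corollaries, with the ``Furthermore'' clause handled by Proposition \ref{prop:product}.

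First I would apply Corollary \ref{cor:CIR in omega-cat implies repulsive} directly to the hypothesis: since $M$ is countable, $\omega$-categorical, and carries a CIR $\ind$, there is a strongly repulsive automorphism $\sigma \in G = \Aut(M)$. This is the only step that uses $\omega$-categoricity or the CIR itself; everything downstream needs only a countable structure admitting a repulsive automorphism.

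Next I would invoke Corollary \ref{cor:repulsive implies cyclically dense}, applied to $\sigma$: this immediately gives a cyclically dense conjugacy class for $G$, and moreover that the set of $g \in G$ such that $\set{\sigma^{n} g \sigma^{-n}}{n \in \Nn}$ is dense is comeager in $G$. Taking $f = \sigma$ yields the witness required by the statement; since the set of $g$ for which the $\Nn$-indexed orbit is dense is contained in the set for which the $\Zz$-indexed orbit $\set{f^{n} g f^{-n}}{n \in \Zz}$ is dense, comeagerness transfers.

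For the ``Furthermore'' clause, I would pass to the disjoint union $\bigsqcup_{n} M$, whose automorphism group is naturally identified with $G^{n}$. By Proposition \ref{prop:product}, $\sigma^{\times n}$ is a (strongly) repulsive automorphism of the countable structure $\bigsqcup_{n} M$. Applying Corollary \ref{cor:repulsive implies cyclically dense} once more \textemdash{} this time to $\sigma^{\times n}$ acting on $\bigsqcup_{n} M$ \textemdash{} produces a cyclically dense conjugacy class in $G^{n}$ together with the comeager set of witnesses.

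There is essentially no real obstacle: the statement is a formal assembly of Corollaries \ref{cor:repulsive implies cyclically dense} and \ref{cor:CIR in omega-cat implies repulsive} plus Proposition \ref{prop:product}. The only point worth flagging in the write-up is the cosmetic mismatch of exponent conventions ($\sigma^{n}\tau\sigma^{-n}$ vs.\ $f^{n} g f^{-n}$ vs.\ $f^{-n} g f^{n}$) between the definition and the cited results, which is absorbed by replacing $\sigma$ by $\sigma^{\pm 1}$ and by noting that density along $\Nn$ implies density along $\Zz$.
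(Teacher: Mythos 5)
Your proposal is correct and is exactly the paper's argument: the corollary is stated there as an immediate consequence of Corollary \ref{cor:CIR in omega-cat implies repulsive} followed by Corollary \ref{cor:repulsive implies cyclically dense}, with the ``Furthermore'' clause obtained by applying Proposition \ref{prop:product} to $\sigma^{\times n}$ on $\bigsqcup_{n}M$ (whose automorphism group is $G^{n}$) and invoking Corollary \ref{cor:repulsive implies cyclically dense} again, which needs only countability. Your remark on the exponent conventions is a fair cosmetic observation and does not affect the argument.
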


\subsection{Ramsey structures and a weakening of having a CIR}
\begin{lem}
\label{lem:CIR implies just over 0}Suppose that $M$ is a countable
ultrahomogeneous structure. Then (1) implies (2) implies (3) where:
\begin{enumerate}
\item $M$ has a CIR defined on finitely generated substructures. 
\item There are two models $M_{0},M_{1}$ isomorphic to $M$ and contained
in $M$, such that $M_{0}\ind^{ns}M_{1}$ and $M_{1}\ind^{ns}M_{0}$.
\item There is a binary relation $\ind$ on finite subsets of $M$ that
satisfies all the properties of Definition \ref{def:groovy} but only
over $\emptyset$. Namely it satisfies stationarity, extension to
the right and the left (over $\emptyset$), monotonicity (over $\emptyset$)
and existence (over $\emptyset$).
\end{enumerate}
\end{lem}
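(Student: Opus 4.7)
For $(2) \Rightarrow (3)$, I would define $A \ind B$ to mean that there exist $A' \subseteq M_0$ and $B' \subseteq M_1$ with $AB \equiv A'B'$ as enumerated tuples. Existence and monotonicity are immediate from the ultrahomogeneity of $M_0, M_1 \cong M$. For extension on the right over $\emptyset$, I may assume (by applying an automorphism of $M$) that $A \subseteq M_0$ and $B \subseteq M_1$, and then realize $\tp(d/B)$ by some $d' \in M_1$ (using that $M_1$ is a Fra\"iss\'e limit of $\Age(M)$); the witness $A \ind Bd'$ is then the identity. Extension on the left is symmetric, using $M_0$. The only substantive axiom is stationarity: given $A_1 \ind B_1$ and $A_2 \ind B_2$ with matching types, I transport the witnesses into $M_0 \cup M_1$, pick an $\Aut(M_0)$-map taking $A_1 \to A_2$, extend it to $\Aut(M)$ fixing $M_1$ pointwise using $M_1 \ind^{ns} M_0$, and perform a symmetric step on the $B$-side using $M_0 \ind^{ns} M_1$ to conclude $A_1 B_1 \equiv A_2 B_2$.

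For $(1) \Rightarrow (2)$, I plan a zig-zag back-and-forth construction of $M_0, M_1 \subseteq M$ as isomorphic copies of $M$, maintaining at every finite stage $n$ the CIR independence $A_n \ind B_n$, where $A_n, B_n$ are the finite initial segments of $M_0, M_1$ built so far. Standard Fra\"iss\'e bookkeeping ensures that the limits $M_0 = \bigcup_n A_n$ and $M_1 = \bigcup_n B_n$ are ultrahomogeneous with age $K = \Age(M)$, hence isomorphic to $M$. At even stages I extend $B_n$, which is easy: given a Fra\"iss\'e goal realized by some tuple $d \in M$, the CIR extension axiom applied to $A_n \ind B_n$ produces $d' \equiv_{B_n} d$ with $A_n \ind B_n d'$. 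At odd stages I must extend $A_n$, which is the main obstacle since the CIR axioms provide no left extension and no base monotonicity. The trick is: given $A_n \ind B_n$ and a tuple $c \in M$ realizing the desired Fra\"iss\'e extension over $A_n$, apply existence to get $A_n c \ind \emptyset$, then extension (on the right) to obtain $B_n' \equiv_\emptyset B_n$ with $A_n c \ind B_n'$. By monotonicity, $A_n \ind B_n'$, and stationarity applied to $A_n \ind B_n$ and $A_n \ind B_n'$ yields $A_n B_n \equiv A_n B_n'$ as enumerated tuples. Ultrahomogeneity of $M$ then produces $\sigma \in \Aut(M)$ fixing $A_n$ pointwise with $\sigma(B_n') = B_n$; setting $c' := \sigma(c)$, we get $c' \equiv_{A_n} c$ and, by $\Aut(M)$-invariance of the CIR, $A_n c' \ind B_n$, so we may set $A_{n+1} := A_n \cup \{c'\}$.

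Finally, the invariant $A_n \ind B_n$ together with stationarity gives both $A_n \ind^{ns} B_n$ and $B_n \ind^{ns} A_n$ at every finite level. To lift this to $M_0 \ind^{ns} M_1$ and $M_1 \ind^{ns} M_0$ in the limit, I would perform a further back-and-forth in the countable ultrahomogeneous $M$: given any partial elementary map $g$ on a finite subset of $M_1$, inductively extend $g$ by adjoining one element of $M_0$ at a time as a fixed point, justifying each step by the finite non-splitting applied to $\{a_0, \ldots, a_k\} \subseteq M_0$ against the current finite part of $M_1$; a standard back-and-forth completion then produces $\tilde g \in \Aut(M)$ extending $g$ and fixing $M_0$ pointwise. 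The main difficulty throughout is the asymmetry of the CIR and the absence of base monotonicity, which is precisely what forces the indirect left-extension argument above rather than a direct symmetric amalgamation.
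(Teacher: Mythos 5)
Your argument is correct in substance, and your route for (1)$\Rightarrow$(2) is genuinely different from the paper's. The paper obtains $M_0,M_1$ by recycling the bi-infinite interval construction from the proof of Theorem \ref{thm:existence of repulsive automorphism-ultrahomogeneous}: compactness produces a tuple $\bar{b}$ enumerating a copy of $M$ in which all intervals are suitably independent, and one takes the two halves $\bar{b}\restriction\left[0,\infty\right)$ and $\bar{b}\restriction\left(-\infty,0\right]$. Your zig-zag construction inside $M$ is more self-contained: it avoids that theorem and compactness altogether, and your device for the left-hand side \textemdash{} existence plus right extension to get $B_n'\equiv B_n$ with $A_nc\ind B_n'$, then monotonicity, stationarity and $\Aut\left(M\right)$-invariance to trade $B_n'$ back for $B_n$ at the cost of moving $c$ to some $c'\equiv_{A_n}c$ \textemdash{} is a correct way to manufacture the missing left extension from the stated axioms. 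For (2)$\Rightarrow$(3) your relation coincides with the paper's (an automorphism of $M$ pushing $A$ into $M_0$ and $B$ into $M_1$), and the verifications (stationarity by a two-step chain using the two non-splitting hypotheses, extension by realizing the type inside $M_1$ via its ultrahomogeneity, existence from equality of ages) are essentially the paper's.

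One recurring point should be repaired, though it does not affect the outcome. You twice invoke automorphisms of $M$ fixing an \emph{infinite} set pointwise: in stationarity (``extend it to $\Aut\left(M\right)$ fixing $M_1$ pointwise'') and in your final paragraph (producing $\tilde{g}$ fixing $M_0$ pointwise). Neither is needed, and neither is justified as stated: non-splitting only tells you that finite maps of the form (identity on a finite part of $M_1$) together with ($A_1\to A_2$) are elementary; to extend such a map to an automorphism of the countable $M$ fixing all of $M_1$ you would have to run a back-and-forth that also absorbs points outside $M_0\cup M_1$, and nothing guarantees you can keep adjoining elements of $M_1$ as fixed points afterwards. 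Fortunately the statements you actually need are finitary: for stationarity, the chain $A_1B_1\equiv A_2B_1\equiv A_2B_2$ follows directly from the two non-splitting hypotheses applied to finite tuples; and since $\ind^{ns}$ for infinite sets is by definition a condition on finite subtuples, your observation that $A_n\ind B_n$ together with stationarity and monotonicity yields $A_n\ind^{ns}B_n$ and $B_n\ind^{ns}A_n$ at every finite stage already gives $M_0\ind^{ns}M_1$ and $M_1\ind^{ns}M_0$ in the limit, with no further back-and-forth.
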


\begin{proof}
(1) implies (2). Suppose that $M$ has a CIR $\ind$ defined on finitely
generated substructures. Consider the tuple $\bar{b}$ constructed
in the proof of Theorem \ref{thm:existence of repulsive automorphism-ultrahomogeneous}.
By condition (\ref{enu:AP}) in that proof, both $\bar{b}\restriction\left[0,\infty\right)$
and $\bar{b}\restriction\left(-\infty,0\right]$ are ultrahomogeneous
and with the same age as $M$, thus isomorphic to $M$ by Fact \ref{fact:Fraisse limits}.
By stationarity and monotonicity, $\bar{b}\restriction\left[0,\infty\right)\ind^{ns}\bar{b}\restriction\left(-\infty,0\right]$
and $\bar{b}\restriction\left(-\infty,0\right]\ind^{ns}\bar{b}\restriction\left[0,\infty\right)$. 

(2) implies (3). For two finite sets $A,B\subseteq M$, let $A\ind B$
iff there is some automorphism $\sigma\in\Aut\left(M\right)$ such
that $\sigma\left(A\right)\subseteq M_{0}$ and $\sigma\left(B\right)\subseteq M_{1}$.
Now, $\ind$ is stationary: suppose that $a\ind b$, $a'\ind b'$,
$a\equiv b$ and $a'\equiv b'$. We want to show that $ab\equiv a'b'$.
Let $\sigma,\tau\in\Aut\left(M\right)$ be such that $\sigma\left(a\right),\tau\left(a'\right)\subseteq M_{0}$
and $\sigma\left(b\right),\tau\left(b'\right)\subseteq M_{1}$. Then
$\sigma\left(a\right)\tau\left(a'\right)\ind^{ns}\sigma\left(b\right)\tau\left(b'\right)$
and $\sigma\left(b\right)\tau\left(b'\right)\ind^{ns}\sigma\left(a\right)\tau\left(a'\right)$,
and hence $ab\equiv\sigma\left(ab\right)\equiv\sigma\left(a\right)\tau\left(b'\right)\equiv\tau\left(a'b'\right)\equiv a'b'$. 

Next, $\ind$ satisfies extension (on the right): suppose that $A\ind B$
and $d$ is a finite tuple of $M$, and we want to find some $d'\equiv_{B}d$
such that $A\ind B\cup d'$. By definition, there is some $\sigma$
with $\sigma\left(A\right)\subseteq M_{0}$ and $\sigma\left(B\right)\subseteq M_{1}$.
As $M_{1}$ is ultrahomogeneous and $B$ is finite, we can extend
$\sigma\restriction\left\langle B\right\rangle $ to some $f:\left\langle Bd\right\rangle \to M_{1}$.
As $M$ is ultrahomogeneous, $d'=\sigma^{-1}\left(f\left(d\right)\right)\equiv_{B}d$
and $\sigma$ witnesses that $A\ind B\cup d'$. Extension on the
left is shown in the same way. 

Existence follows from the fact that $\Age\left(M_{1}\right)=\Age\left(M_{0}\right)=\Age\left(M\right)$.
Monotonicity is clear.

\end{proof}
\begin{rem}
If in Lemma \ref{lem:CIR implies just over 0}, if we had assumed
that $M$ was $\omega$-categorical, in (3) we could define $\ind$
on arbitrary subsets of $M$, even infinite. We would define $A\ind B$
iff for every finite subsets $A'\subseteq A$ and $B'\subseteq B$,
$A'\ind B'$. One can then use K\"onig's Lemma and Ryll-Nardzewski
to show that this has the extension property.  

In addition, (3) would be equivalent to (2): enumerate $M$ as $\bar{m}=\sequence m{m\in M}$
(i.e., the identity function). Let $\bar{x}=\sequence{x_{m}}{m\in M}$
and $\bar{y}=\sequence{y_{m}}{m\in M}$ be two disjoint sequences
of variables. For a finite tuple $a=\sequence{m_{i}}{i<n}$ in $M$,
write $x_{a}=\sequence{x_{m_{i}}}{i<n}$, and similarly define $y_{a}$.
Let $\Gamma\left(\bar{x},\bar{y}\right)$ be the union of the sets
$\Gamma_{a,b,c}^{0}\left(x_{a},x_{b},y_{c}\right)$ and $\Gamma_{a,b,c}^{1}\left(x_{c},y_{a},y_{b}\right)$
for all finite tuples $a,b,c$ from $M$ such that $a\equiv b$, where
$\Gamma_{a,b,c}^{0}\left(x_{a},x_{b},y_{c}\right)$ says that $x_{a}$
and $x_{b}$ have the same type over $y_{c}$ and similarly, $\Gamma_{a,b,c}^{1}\left(x_{c},y_{a},y_{b}\right)$
says that $y_{a}$ and $y_{b}$ have the same type over $x_{c}$.
Let $\Sigma\left(\bar{x},\bar{y}\right)$ be $\Gamma\left(\bar{x},\bar{y}\right)$
and the assertions that both $\bar{x}$ and $\bar{y}$ satisfy the
type $\tp\left(\bar{m}/\emptyset\right)$. Then by (3), $\Sigma$
is consistent, so by $\omega$-categoricity, we can realize it in
$M$. 
\end{rem}

\begin{prop}
\label{prop:Ramsey has everything but over base}Suppose that $M$
is a countable $\omega$-categorical Ramsey structure. Then (2) from
Lemma \ref{lem:CIR implies just over 0} holds.
\end{prop}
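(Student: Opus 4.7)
The plan is to combine two ingredients: by Fact \ref{fact:ramsey iff extremeley amenable}, $G := \Aut(M)$ is extremely amenable; and a product of extremely amenable groups is extremely amenable (via the standard two-step fixed-point argument: take the fixed set of one factor, then the fixed point inside that of the other). The product group $G \times G$ will act on a space of complete types in two disjoint copies of variables indexed by $M$, and its fixed points will encode both non-splitting conditions $M_{0}\ind^{ns}M_{1}$ and $M_{1}\ind^{ns}M_{0}$ simultaneously.

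More precisely, fix an enumeration $\bar{m}=(m)_{m\in M}$, let $p_{0}=\tp(\bar{m}/\emptyset)$, and introduce disjoint tuples of variables $\bar{x}=(x_{m})_{m\in M}$ and $\bar{y}=(y_{m})_{m\in M}$. Define $X\subseteq S_{\bar{x}\bar{y}}(\emptyset)$ to be the closed subspace of types $r$ containing $p_{0}(\bar{x})\cup p_{0}(\bar{y})$. Then $X$ is nonempty (realize $p_{0}$ twice in the monster $\C$), compact, and carries a continuous action of $G\times G$ under which $(\sigma,\tau)$ relabels $x_{m}\mapsto x_{\sigma(m)}$ and $y_{m}\mapsto y_{\tau(m)}$; the subspace $X$ is preserved because $p_{0}$ is $G$-invariant by ultrahomogeneity. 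By extreme amenability of $G\times G$, pick a fixed point $r^{*}\in X$ and realize it by $(\bar{a}^{*},\bar{b}^{*})$ in $\C$, setting $M_{0}=\{a_{m}^{*}:m\in M\}$ and $M_{1}=\{b_{m}^{*}:m\in M\}$. Since $\bar{a}^{*}\equiv\bar{m}$ and $\bar{b}^{*}\equiv\bar{m}$, the maps $m\mapsto a_{m}^{*}$ and $m\mapsto b_{m}^{*}$ are $L$-isomorphisms onto substructures of $\C$ isomorphic to $M$.

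To check $M_{1}\ind^{ns}M_{0}$: given $\bar{a}_{1},\bar{a}_{2}$ in $M_{0}$ with $\bar{a}_{1}\equiv\bar{a}_{2}$, the corresponding index tuples $\bar{m}_{1},\bar{m}_{2}$ in $M$ satisfy $\bar{m}_{1}\equiv\bar{m}_{2}$, so ultrahomogeneity supplies $\sigma\in G$ with $\sigma(\bar{m}_{1})=\bar{m}_{2}$; then $(\sigma,\id)$-invariance of $r^{*}$ gives $\tp(\bar{a}_{1}/M_{1})=\tp(\bar{a}_{2}/M_{1})$. The symmetric argument with $(\id,\tau)$-invariance yields $M_{0}\ind^{ns}M_{1}$. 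Finally, by downward L\"owenheim--Skolem take a countable $N\preceq\C$ containing $M_{0}\cup M_{1}$; by $\omega$-categoricity $N\cong M$, and transporting $M_{0},M_{1}$ along this isomorphism places them inside $M$ with all properties preserved. The main obstacle is that a naive single application of extreme amenability of $G$ on $S_{\bar{y}}(M)$ only produces the one-sided condition $M_{1}\ind^{ns}M$, and iterating does not automatically return the reverse direction; the symmetrization requires the switch to a flow over $\emptyset$ in two variable blocks, where both factors of $G\times G$ act freely and the full invariance of a single fixed point delivers both asymmetric conditions at once.
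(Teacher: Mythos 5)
Your proof is correct, and it is essentially the paper's argument in repackaged form: the paper applies extreme amenability of $G=\Aut\left(M\right)$ twice \textemdash{} first to the parameter-shift action on the space of types over $M$ extending $\tp\left(\bar{m}/\emptyset\right)$, yielding the compact space of invariant types, and then to the variable-shift action on that space \textemdash{} which is precisely the two-step fixed-point argument you invoke to justify extreme amenability of $G\times G$. Your only deviation is cosmetic: you work over $\emptyset$ with two blocks of variables and a single $G\times G$-flow instead of over $M$ with one block of variables and two successive $G$-actions, and both versions conclude identically by moving the two copies into $M$ using $\omega$-categoricity.
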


\begin{proof}
Let $G=\Aut\left(M\right)$ and let $\bar{m}=\sequence m{m\in M}$
(i.e., the identity function). Let $\bar{x}=\sequence{x_{m}}{m\in M}$
and $S_{\bar{m}}\left(M\right)=\set{p\left(\bar{x}\right)\in S\left(M\right)}{p\restriction\emptyset=\tp\left(\bar{m}/\emptyset\right)}$,
a compact Hausdorff space with the logic topology. Then $G$ acts
on $S_{\bar{m}}\left(M\right)$ by setting $\sigma*p=\set{\sigma*\varphi}{\varphi\in p}$
where $\sigma*\varphi\left(\bar{x},m\right)=\varphi\left(\bar{x},\sigma\left(m\right)\right)$.
 A fixed point of this action is just an invariant type over $M$.
As $G$ is extremely amenable (Fact \ref{fact:ramsey iff extremeley amenable}),
there is an invariant type which enumerates a model $N$ such that
that $N\ind^{ns}M$.  Now consider the space $P$ of invariant types:
$P=\set{q\in S_{\bar{m}}\left(M\right)}{G*q=q}$. A type $q$ is invariant
iff for every formula $\varphi\left(\bar{x},y\right)$ over $\emptyset$,
if $m\equiv m'$ are from $M$ then $\varphi\left(\bar{x},m\right)\in q$
iff $\varphi\left(\bar{x},m'\right)\in q$. This is easily a closed
condition, so $P$ is compact, and we already know that it is nonempty.
Now let $G$ act on $P$ by setting $\sigma\star q=\set{\sigma\star\varphi}{\varphi\in q}$
where $\sigma\star\varphi\left(\bar{x},m\right)=\varphi\left(\bar{x}_{\sigma},m\right)$,
where $\bar{x}_{\sigma}=\sequence{x_{\sigma\left(m\right)}}{m\in M}$.
Note that for all $p\in P$ and $\sigma\in G$, $\sigma\star p\restriction\emptyset=\tp\left(\sequence{\sigma^{-1}\left(m\right)}{m\in M}/\emptyset\right)=\tp\left(\bar{m}/\emptyset\right)$,
and that $\sigma\star p$ remains invariant. By extreme amenability,
there is some $q\in P$ such that $\sigma\star q=q$ for all $\sigma\in G$.
Let $N'$ be a model enumerated by a realization of $q$, then $N'\ind^{ns}M$
and $M\ind^{ns}N'$.  

By $\omega$-categoricity, we may assume that these two models are
contained in $M$. 
\end{proof}
\begin{rem}
\label{rem:Ramsey need not have CIR} By Fact \ref{fact:Expanding Ramsey by naming elements},
expanding an ultrahomogeneous Ramsey structure by finitely many constants
gives an independence relation as in (3) from Lemma \ref{lem:CIR implies just over 0}
over any finite set. However there is no reason that it would satisfy
transitivity. Indeed, the lexicographically ordered dense tree (which
is Ramsey, see Example \ref{exa:lex trees are ramsey}) does not have
a CIR. See Corollary \ref{cor:Dense trees don't have a groovy ind relation}
below. 
\end{rem}

\section{Examples of theories with a canonical independence relation}

There are many examples of countable ultrahomogeneous structures with
a CIR. Here we will give some of them. We will define the relation
$\ind$, but sometimes leave most of the details of checking that
it satisfies the axioms to the reader. All the CIRs we define are
defined on finitely generated substructures. 
\begin{example}
\label{exa:trivial structure}The most trivial ultrahomogeneous structure
is of course the structure with universe $\omega$ and no relations
but equality. Its automorphism group is $S_{\infty}$. For finite
sets $A,B,C$ define $A\ind_{C}B$ by $A\cap B\subseteq C$. This
is a CIR. 
\end{example}

\begin{example}
\label{exa:T stable with emptyset stationary}If $T$ is stable and
$\emptyset$ is a base (i.e., $\acl^{\eq}\left(\emptyset\right)=\dcl^{\eq}\left(\emptyset\right)$
so that every type over $\emptyset$ has a unique non-forking extension),
then $\ind^{f}$ (i.e., non-forking independence) is canonical.
\end{example}

\begin{example}
Let $\left(\cal B,<,\wedge,\vee,0,1,\square^{c}\right)$ be the atomless
Boolean algebra. For finite sets $A,B,C$, define $A\ind_{C}B$ iff
$\left\langle C\right\rangle =\left\langle AC\right\rangle \cap\left\langle BC\right\rangle $
and for every atom $a\in\left\langle AC\right\rangle $ and every
atom $b\in\left\langle BC\right\rangle $, if there is an atom $c\in\left\langle C\right\rangle $
such that $a,b\leq c$ then $a\wedge b\neq0$. Let us show that $\ind$
is a (symmetric) CIR. 

Stationarity over $\emptyset$: $A\ind B$ says that the atoms in
$\left\langle AB\right\rangle $ are in bijection with (atoms of $A$)
$\times$ (atoms of $B$).  Thus, if $A'\ind B'$ and $f:\left\langle A\right\rangle \to\left\langle A'\right\rangle $,
$g:\left\langle B\right\rangle \to\left\langle B'\right\rangle $
are isomorphisms, then $h:\left\langle AB\right\rangle \to\left\langle A'B'\right\rangle $
taking an atom $a\wedge b$ to $f\left(a\right)\wedge g\left(b\right)$
is an isomorphism. This easily implies stationarity. 

Transitivity: suppose that $A\ind_{CD}B$ and $D\ind_{C}B$, and we
want to show that $AD\ind_{C}B$. Suppose that $a\in\left\langle ACD\right\rangle $,
$b\in\left\langle BC\right\rangle $ are atoms, and $c\in\left\langle C\right\rangle $
is an atom such that $a,b\leq c$. Let $d\in\left\langle DC\right\rangle $
be an atom such that $a\leq d\leq c$. Then $d\wedge b\neq0$. Let
$b'\leq d\wedge b$ be an atom of $\left\langle BCD\right\rangle $,
so that $a,b'\leq d$. Hence $a\wedge b'\neq0$ and thus $a\wedge b\neq0$.
Transitivity to the right follows by symmetry. 

Extension: suppose that $A\ind_{C}B$ and $d$ is given. We want to
find $d'\equiv_{BC}d$ such that $A\ind_{C}Bd$. We may assume that
$d\notin\left\langle BC\right\rangle $. An atom in $\left\langle BCd\right\rangle $
has the form $d\wedge b$ or $d^{c}\wedge b$ for some atom $b$ of
$\left\langle BC\right\rangle $. The type $\tp\left(d/BC\right)$
is determined by knowing which of these terms $d\wedge b,d^{c}\wedge b$
is nonzero (for $b\in\left\langle BC\right\rangle $ an atom). As
$\cal B$ is atomless and $A\ind_{C}B$, we can find some $d'$ such
that for every $a,b,c$ atoms in $\left\langle AC\right\rangle ,\left\langle BC\right\rangle ,\left\langle C\right\rangle $
respectively such that $a,b\leq c$, if $d\wedge b\neq0$ then $a\wedge d'\wedge b\neq0$
(else $d'\wedge b=0$) and if $d^{c}\wedge b\neq0$ then $a\wedge\left(d'\right)^{c}\wedge b\neq0$
(else $\left(d'\right)^{c}\wedge b=0$). In addition, we ask that
if $d\wedge b,d^{c}\wedge b\neq0$ then both $a\wedge d'\wedge b$
and $a\wedge\left(d'\right)^{c}\wedge b$ are not in $\left\langle ABC\right\rangle $.
It now follows that $\left\langle AC\right\rangle \cap\left\langle BCd'\right\rangle =\left\langle C\right\rangle $:
suppose that $e\in\left\langle AC\right\rangle \cap\left\langle BCd'\right\rangle $.
Then as $e\in\left\langle BCd'\right\rangle $, it can be written
as $b_{0}\vee\left(b_{1}\wedge d'\right)\vee\left(b_{2}\wedge\left(d'\right)^{c}\right)$
where $b_{0},b_{1},b_{2}\in\left\langle BC\right\rangle $ are pairwise
disjoint and for every atom $b'\leq b_{1}\vee b_{2}$ from $\left\langle BC\right\rangle $,
$d'\wedge b',\left(d'\right)^{c}\wedge b'\neq0$. If both $b_{1},b_{2}=0$,
then $e\in\left\langle BC\right\rangle \cap\left\langle AC\right\rangle $
so $e\in\left\langle C\right\rangle $. If $b_{1}\neq0$, let $b_{1}'\leq b_{1}$
be an atom of $\left\langle BC\right\rangle $. So $e\wedge b_{1}'\in\left\langle ABC\right\rangle $
(because $e\in\left\langle AC\right\rangle $) and has the form $b_{1}'\wedge d'$.
Let $a\in\left\langle AC\right\rangle $ and $c\in\left\langle C\right\rangle $
be atoms such that $a,b_{1}'\leq c$. Then $e\wedge b_{1}'\wedge a\in\left\langle ABC\right\rangle $
and has the form $a\wedge b_{1}'\wedge d'$ which is not in $\left\langle ABC\right\rangle $
by construction, contradiction. Similarly $b_{2}=0$ and we are done. 

Existence and monotonicity are clear. 
\end{example}

\begin{example}
Let $\left(M,R\right)$ be the random tournament (a tournament is
a complete directed graph such that for all $x,y$, it cannot be that
both $R\left(x,y\right)$ and $R\left(y,x\right)$, and the random
tournament is the Fra\"iss\'e limit of the class of finite tournaments).
Given finite sets $A,B,C$, write $A\ind_{C}B$ iff $A\cap B=C$ and
if $a\in A\backslash C$, $b\in B\backslash C$ then $R\left(a,b\right)$.
This easily satisfies all the requirements. 
\end{example}

\begin{rem}
The following definition is from \cite[after Theorem 5.13]{Kwiatkowska2016}.
Let $K$ be a class of finite $L$-structures where $L$ is a relational
language. Then $K$ has the s\emph{trong$^{+}$ amalgamation property}
if for all $A,B,C\in K$ with $A\subseteq B,C$ there is $D\in K$
such that $D=C'\cup B'$ with $C'\cong_{A}C$, $B'\cong_{A}B$ and
for every $n$-ary relation $R$ and every $x_{1},\ldots,x_{n}$ and
$y_{1},\ldots,y_{n}$ from $D\backslash A$ which intersect both $C'$
and $B'$, if {[}$x_{i}\in B'$ iff $y_{i}\in B'$ for all $1\leq i\leq n${]},
then $R^{D}\left(x_{1},\ldots,x_{n}\right)$ iff $R^{D}\left(y_{1},\ldots,y_{n}\right)$
(where $R^{D}$ is the interpretation of $R$ in $D$). For example,
the random tournament satisfies this property. In \cite[after Theorem 5.14]{Kwiatkowska2016}
it is proved that if $M$ is ultrahomogeneous and $\Age\left(M\right)$
has the strong$^{+}$ amalgamation property, then $\Aut\left(M\right)$
has a cyclically dense conjugacy class. They prove it using a condition
they denote by $(\Delta_{n})$, see there, Theorem 5.12. We do not
know if this condition implies the existence of a CIR.
\end{rem}

\subsection{Free amalgamation classes}

In \cite{MR3022717,Isabel,MR3663421} there is an axiomatic framework
for defining an abstract ternary relation close to our CIR. More precisely,
in \cite[Definition 3.1]{Isabel} and \cite[Definition 2.1]{MR3022717},
the notion of a stationary independence relation (SIR) is introduced
(in \cite[Definition 3.1]{Isabel} for finitely generated structures
and in \cite[Definition 2.1]{MR3022717} for sets in general). A similar
notion is defined in \cite[Definition 2.1]{MR3663421}, with more
axioms. In any case, all these notions imply ours, except perhaps
that stationarity over $\emptyset$ becomes stationarity over $\acl\left(\emptyset\right)$,
so that this becomes a CIR in the expansion $\left(M,\acl\left(\emptyset\right)\right)$
(note that our extension follows from full stationarity and their
version of existence). 

Thus, we can apply our results to the examples studied there. In particular,
we get the following examples.
\begin{example}
The rational Urysohn space $\Qq\Uu$ is the Fra\"iss\'e limit of
the class of finite metric spaces with rational distances. Pick a
point $q\in\Qq\Uu$, and consider the structure $\left(\Qq\Uu,q\right)$
where we add a constant for $q$. In \cite{TentZiegler}, it is proved
that the relation $A\ind_{C}B$ which holds for finite $A,B,C$ iff
for every $a\in AC$, $b\in BC$, $d\left(a,b\right)=\min\set{d\left(a,c\right)+d\left(c,b\right)}{c\in C}$
is a CIR in $\left(\Qq\Uu,q\right)$. 
\end{example}

In all examples given by Conant \cite[Example 3.2]{MR3663421} which
we list now, $\acl\left(\emptyset\right)=\emptyset$, so we actually
get a CIR in the structure (i.e., no need to take an expansion) by
\cite[Proposition 3.4]{MR3663421}. 
\begin{example}
\label{exa:free amalgamation classes}Fra\"iss\'e limits with free
amalgamation: suppose that $L$ is a relational language and $K$
is an essentially countable (see above Fact \ref{fact:Fraisse limits})
class of finite $L$-structures, such that if $A,B,C\in K$ and $A\subseteq C,B$,
then the free amalgam of $A,B,C$ is in $K$ (i.e., a structure $D=C'\cup B'$
with $C'\cong_{A}C$, $B'\cong_{A}B$, $B'\cap C'\subseteq A$ and
for every tuple $a\in D$ in the length of some relation $R\in L$,
if $R\left(a\right)$ then $a\in C'$ or $a\in B'$) (here we also
include the case $A=\emptyset$). Let $M$ be the Fra\"iss\'e limit
of $K$, and define $B\ind_{A}C$ iff $ABC$ is the free amalgam of
$A,AB,AC$. If the language is finite or more generally in the context
of Fact \ref{fact:Fraisse limits} (2), it is easy to see that in
this case $\ind$ is a CIR (this is also proved, for finite languages,
in \cite[Proposition 3.4]{MR3663421}).

This class of examples contain e.g., the random graph, the universal
$K_{n}$-free graph (the Henson graph), and their hypergraph analogs.
\end{example}

\begin{example}
\label{exa:naming the equivalence classes has SAP} Let $L=\set{P_{n}}{n<\omega}$
and let $K$ be the class of finite $L$-structures in which $P_{n}\left(x_{0},\ldots,x_{n-1}\right)$
implies that $x_{i}\neq x_{j}$ for $i\neq j$. Then $K$ is essentially
countable and is a free amalgamation class, and moreover for each
$n<\omega$ there are finitely many isomorphism types of structures
of size $n$ (so we can use Fact \ref{fact:Fraisse limits}). Let
$M$ be the limit. Now recall the example $N$ described in Remark
\ref{rem:constraint for cdcc compact quotient}, with infinitely many
independent equivalence relations with two classes. Then $M$ is $N$
expanded by naming the classes.  In other words, $\Aut\left(M\right)=\Aut\left(N\right)^{0}$. 
\end{example}

\begin{example}
In \cite[Section 10]{MR1683298} the authors describe a generic $K_{n}+K_{3}$-free
graph, where $K_{n}+K_{3}$ is the free amalgam of the complete graph
on $n$ vertices and a triangle over a single vertex. This structure
is $\aleph_{0}$-categorical, with $\acl\left(\emptyset\right)=\emptyset$.
By \cite[Example 3.2 (2), Proposition 3.4]{MR3663421} there is a
CIR on this graph. 
\end{example}

\begin{example}
$\omega$-categorical Hrushovski constructions. Let $L$ be a finite
relational language, and let $f:\Rr_{\geq0}\to\Rr_{\geq0}$ be a ``control
function''. According to \cite[Theorem 3.5]{MR1900903}, there is
an $\omega$-categorical generic Hrushovski construction $M_{f}$
for a ``free amalgamation class'' $K_{f}$ if $f$ satisfies certain
conditions. It follows from \cite[Example 3.2 (3), Proposition 3.4]{MR3663421}
that given extra conditions on the algebraic closure, $M_{f}$ admits
a CIR. See more details in \cite{MR3663421,MR1900903}. 
\end{example}

\subsection{Ultrahomogenous partial orders}

In \cite{Glab2017} the authors prove that the automorphism group
of every ultrahomogeneous poset (partially order set) is topologically
2-generated. They also characterize when they have a cyclically dense
conjugacy class. We can find such a conjugacy class by finding a CIR
whenever possible. We should remark that they prove more on the automorphism
groups of those structures. By \cite{MR544855} there are four types
of ultrahomogeneous posets. 
\begin{fact}
\cite{MR544855} Suppose that $\left(H,<\right)$ is an ultrahomogeneous
poset. Then $H$ is isomorphic to one of the following:
\begin{enumerate}
\item The random poset: the Fra\"iss\'e limit of the class of finite partial
orders. 
\item The orders $\cal A_{n}$ for $1\leq n\leq\omega$: $\left(n,<\right)$
where $<$ is trivial i.e., empty. 
\item The orders $\cal B_{n}$ for $1\leq n\leq\omega$: $\left(n\times\Qq,<\right)$
where $\left(k,q\right)<\left(m,p\right)$ iff $k=m$ and $p<q$. 
\item The orders $\cal C_{n}$ for $1\leq n\leq\omega$: $\left(n\times\Qq,<\right)$
where $\left(k,q\right)<\left(m,p\right)$ iff $q<p$.
\end{enumerate}
\end{fact}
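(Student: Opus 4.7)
The plan is to classify countable ultrahomogeneous posets by analysing two $\Aut(H)$-invariant binary relations on $H$: the \emph{comparability-or-equal} relation $\sim_{<}$ (where $x\sim_{<} y$ iff $x=y$ or $x,y$ are $<$-comparable) and the \emph{incomparability-or-equal} relation $\sim_{\perp}$ (where $x\sim_{\perp}y$ iff $x=y$ or $x\perp y$). A direct inspection of 3-element posets shows that $\sim_{<}$ fails transitivity exactly when one of the shapes $V$ (one element below two incomparables) or $\Lambda$ (the dual) embeds into $H$, while $\sim_{\perp}$ fails transitivity exactly when the $N$-poset (a 2-chain together with an element incomparable to both endpoints) embeds into $H$. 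I would then split into three exhaustive cases.

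\textbf{Case 1: $\sim_{<}$ is transitive.} Then $V,\Lambda\notin\Age(H)$. Each $\sim_{<}$-class is a chain (use transitivity of $<$ inside a class) and any pair drawn from distinct classes is incomparable. Ultrahomogeneity of $H$ forces $\Aut(H)$ to permute the classes transitively, so all classes have a common linear-order type; restricting ultrahomogeneity to one class yields a countable ultrahomogeneous linear order, which by the standard classification is either a singleton or $(\Qq,<)$. In the singleton case $H\cong\cal A_n$, and in the dense case $H\cong\cal B_n$, where $n\le\omega$ counts the classes.

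\textbf{Case 2: $\sim_{\perp}$ is transitive.} Then $N\notin\Age(H)$. The $\sim_{\perp}$-classes are antichains, and for distinct classes $C_1,C_2$, any pair $(a,b)\in C_1\times C_2$ must be comparable, with the direction uniform over $C_1\times C_2$: otherwise one constructs a forbidden $N$-configuration. Hence $H/\!\sim_{\perp}$ carries a linear order, which is a countable ultrahomogeneous chain, so trivial or $(\Qq,<)$. Transitivity of the $\Aut(H)$-action on classes forces them equinumerous, yielding $H\cong\cal A_n$ or $H\cong\cal C_n$.

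\textbf{Case 3: neither relation is transitive.} Then $N\in\Age(H)$ and at least one of $V,\Lambda$ lies in $\Age(H)$. The goal is to show $\Age(H)$ is the class of \emph{all} finite posets, from which $H\cong$ random poset follows by uniqueness of Fra\"iss\'e limits (Fact \ref{fact:Fraisse limits}). First I would show that $V\in\Age(H)$ implies $\Lambda\in\Age(H)$ (and dually) by amalgamating a $V$-configuration with an $N$-configuration over a carefully chosen $2$-element base. Then I would prove by induction on $|P|$ that every finite poset $P$ embeds into $H$: fix $p\in P$, embed $P\setminus\{p\}$ by inductive hypothesis, and then realize the desired comparability pattern of $p$ against the rest by iterated use of the amalgamation property of $\Age(H)$, feeding in the $3$-element configurations $V,\Lambda,N$ to extend one element at a time. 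The main obstacle is this last induction: one must verify that every pattern of comparabilities between a new element and an existing finite poset is consistent and realizable, which reduces to a careful case analysis of which $3$-element extensions appear in $\Age(H)$ and to showing that amalgamation at each level propagates the needed freedom.
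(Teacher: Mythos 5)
This Fact is not proved in the paper at all: it is Schmerl's classification theorem, quoted from \cite{MR544855}, so there is no internal proof to compare against and your proposal has to stand on its own as a proof of Schmerl's theorem. Your Cases 1 and 2 are correct and essentially routine: transitivity of the comparability (resp.\ incomparability) relation makes its classes chains (resp.\ antichains), invariance of the relation together with ultrahomogeneity makes each class (and, in Case 2, the quotient order) an ultrahomogeneous countable linear order, hence a singleton or $\left(\Qq,<\right)$, and transitivity of $\Aut\left(H\right)$ on points makes all classes isomorphic; this does yield $\cal A_{n}$, $\cal B_{n}$, $\cal C_{n}$.

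The genuine gap is Case 3, which is where the entire content of the theorem lives, and your sketch does not close it. First, the preliminary step ``$V\in\Age\left(H\right)\Rightarrow\Lambda\in\Age\left(H\right)$ by a single amalgamation of $V$ with your three-element $N$ over a two-element base'' fails as stated: amalgamating $V=\left\{ a<b,\ a<c,\ b\perp c\right\} $ with $N=\left\{ a<b\right\} \cup\left\{ y\right\} $ over the base $\left\{ a<b\right\} $ leaves only the pair $\left(c,y\right)$ undetermined, and the amalgam is merely \emph{some} completion in $\Age\left(H\right)$; the completion with $c\perp y$ contains no $\Lambda$, and other two-element bases give even less leverage, so this implication already needs a multi-step argument with case distinctions (or a different idea). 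Second, and more seriously, the concluding induction --- every one-point extension of every $A\in\Age\left(H\right)$ lies in $\Age\left(H\right)$, hence $\Age\left(H\right)$ is the class of all finite posets --- is precisely the hard core of Schmerl's proof and does not follow formally from ``all three-element posets are in the age'' plus AP: amalgams may identify or collapse points, and one must use the presence of $N$ essentially at every stage (note that $\Age\left(\cal C_{2}\right)$ contains $V$ and $\Lambda$ and is an amalgamation class without containing all finite posets, so generic amalgamation-pushing with $V$ and $\Lambda$ alone cannot suffice). You explicitly flag this induction as ``the main obstacle'' and defer it to ``a careful case analysis,'' but that case analysis is the theorem; without it the classification in Case 3 is not established.
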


Note that the orders $\cal A_{n}$ have $S_{n}$ as their automorphism
group, and thus for $n$ finite cannot have a dense conjugacy class.
For $n=\omega$, this is Example \ref{exa:trivial structure}. 

Also, the orders $\cal B_{n}$ for $1<n<\omega$ cannot have a dense
conjugacy class by Remark \ref{rem:constraint for cdcc compact quotient}:
$S_{n}$ is a quotient of the automorphism group (define $a\mathrela Eb$
iff $a$ and $b$ are comparable, and note that there are $n$ equivalence
classes, every permutation of which is induced by an automorphism). 

\subsubsection{The random poset}

Suppose that $\left(\cal D,\leq\right)$ is the random partial order.
For finite sets $A,B,C$ define $A\ind_{C}B$ iff $A\cap B\subseteq C$
and if $a\in A,b\in B$ then $a$ is comparable with $b$ iff for
some $c\in C$, $a\leq c\leq b$ or $b\leq c\leq a$. Then $\ind$
is a (symmetric) CIR. We will show only transitivity and extension,
and leave the rest to the reader. Suppose that $A\ind_{CD}B$ and
$D\ind_{C}B$. Given $a\in A,b\in B$, such that $a\leq b$, there
must be some $d\in CD$ such that $a\leq d\leq b$. Hence there must
be some $c\in C$ with $d\leq c\leq b$. Together we are done. Extension:
suppose that $A\ind_{C}B$ and we are given $d$. Assume $d\notin BC$
(otherwise we are done). Then let $d'\equiv_{BC}d$ be such that $d'\notin ABC$
and for all $a\in A$, $a\leq d'$ iff for some $c\in C$, $a\leq c\leq d$,
and similarly define when $d'\leq a$. Now, $\left(ABCd',\leq\right)$
is a poset since $A\ind_{C}B$. 

\subsubsection{The orders $\protect\cal B_{1}$ and $\protect\cal B_{\omega}$. }
\begin{example}
\label{exa:DLO has groovy}For $\left(\Qq,<\right)$ (which is $\cal B_{1}$),
for every finite $A,B,C\subseteq\Qq$, we let $A\ind_{C}B$ if $A\cap B\subseteq C$
and for all $a\in A\backslash C$ and $b\in B\backslash C$ such that
$a\equiv_{C}b$, $a<b$. Then $\ind$ is a CIR. We prove only transitivity
and leave the rest to the reader.

Suppose that $A\ind_{DC}B$ and $D\ind_{C}B$. We have to show that
$AD\ind_{C}B$, which amounts to showing that $A\ind_{C}B$. Fix $a\in A\backslash C$
and $b\in B\backslash C$ such that $a\equiv_{C}b$. We have to show
that $a<b$. Note that $b\notin D$. If $a\in D$ then this is true
by our assumption. Otherwise, $a\notin CD$. If $a\equiv_{CD}b$ then
we are done. Otherwise, $a,b$ have different cuts over $CD$. But
since they realize the same cut over $C$, it follows that there is
some $d\in D\backslash C$ such that either $a<d<b$ or $b<d<a$.
The former would imply what we want, so assume that $b<d<a$. But
then $b\equiv_{C}d$ so $d<b$ \textemdash{} a contradiction. The
other direction of transitivity is proved similarly. 
\end{example}

\begin{example}
Consider $\cal B_{\omega}$. Then each equivalence class of the relation
$E$ of being comparable is a DLO, and thus by Example \ref{exa:DLO has groovy},
for each $n<\omega$, there is a CIR $\ind^{F}$ defined as in Example
\ref{exa:DLO has groovy} for each $E$-class $F$. For $\left(\cal B_{\omega},<\right)$
and finite sets $A,B,C$, define $A\ind_{C}B$ iff $A\cap B\subseteq C$,
$A/E\cap B/E\subseteq C/E$(if $a\in A$, $b\in B$ and $a\mathrela Eb$
then there is some $c\in C$ such that $a\mathrela Ec$) and for every
$E$-class $F$, $A\cap F\ind_{C\cap F}^{F}B\cap F$. This is easily
seen to be a CIR. Note that we need infinitely many classes for extension.
\end{example}

\subsubsection{The orders $\protect\cal C_{n}$ for $1\protect\leq n\protect\leq\omega$. }

In $\cal C_{n}$ we have an equivalence relation $E$, defined by
$a\mathrela Eb$ iff $a$ and $b$ are incomparable (they have the
same second coordinate). Then $\cal C_{n}/E\models DLO$, so we have
a CIR $\ind^{E}$ defined on it by Example \ref{exa:DLO has groovy}.
For finite $A,B,C\subseteq\cal C_{n}$, define $A\ind_{C}B$ iff $A/E\ind_{C/E}^{E}B/E$.
This trivially satisfies all the properties. 

\subsection{Ultrahomogeneous graphs}

In \cite{MR3669009}, the authors prove that for every ultrahomogeneous
graph $\Gamma=\left(V,E\right)$, $\Aut\left(\Gamma\right)$ is topologically
2-generated. Similarly to the poset case, we can recover some results
by finding a CIR whenever possible. By \cite{MR583847} we have the
following classification of ultrahomogeneous graphs. Recall that for
a graph $\left(V,E\right)$, its \emph{dual} is $\left(V,E'\right)$
where $E'=\left[V\right]^{2}\backslash E$.
\begin{fact}
\cite{MR583847} Any countable ultrahomogeneous graph $\Gamma$ is
isomorphic to one of the following graphs, or its dual.
\begin{enumerate}
\item The random graph.
\item For $n\geq3$, the Henson graph, i.e., the $K_{n}$-free universal
graph (the Fra\"iss\'e limit of the class of $K_{n}$-free finite
graphs). 
\item For any $1\leq n\leq\omega$, the graph $\omega K_{n}$ consisting
of a disjoint union of countably many copies of $K_{n}$. 
\item For any $2\leq n<\omega$, the graph $nK_{\omega}$ consisting of
a disjoint union of $n$ copies of $K_{\omega}$ (the complete graph
on $\omega$). 
\end{enumerate}
\end{fact}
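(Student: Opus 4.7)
The plan is to follow the classical Fraïssé-theoretic approach: since $\Gamma$ is ultrahomogeneous, its age $K = \Age(\Gamma)$ is a hereditary amalgamation class of finite graphs that determines $\Gamma$ up to isomorphism, so the classification reduces to classifying all such $K$. Throughout, I will freely pass to the dual of $\Gamma$ (which is also ultrahomogeneous and has the complementary age), so that symmetric cases can be treated together.

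The first dichotomy is whether $K$ contains every finite graph; if so, $\Gamma$ is the random graph by uniqueness of the Fraïssé limit. Otherwise, let $\mathcal{F}$ be the set of finite graphs omitted from $K$. Split next on whether $\mathcal{F}$ contains a clique. If $K_n \in \mathcal{F}$ with $n \geq 3$ minimal, then I aim to show that $K$ is exactly the class of all $K_n$-free finite graphs: this latter class is itself a free-amalgamation Fraïssé class whose limit is the Henson graph, and the argument is that if $K$ omitted any other $K_n$-free graph then an induction on vertex count, using free amalgamation inside $K$ of smaller $K_n$-free witnesses, would contradict the amalgamation property of $K$. The dual subcase gives the dual Henson graphs.

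The hard remaining case is when $\mathcal{F}$ contains neither a clique nor, after possibly dualizing once more, an independent set. Here the aim is to show that adjacency in $\Gamma$ must be an equivalence relation, so that $\Gamma$ is a disjoint union of cliques. I would pick $H \in \mathcal{F}$ of minimal order; since $H$ contains both an edge and a non-edge, an amalgamation argument shows that the existence in $\Gamma$ of an induced path of length two (a vertex $b$ with neighbors $a, c$ satisfying $a \not\sim c$) together with copies of $H$ minus a well-chosen vertex would let one build an embedded copy of $H$ in $\Gamma$, contradicting $H \in \mathcal{F}$. Hence adjacency is transitive, so $\Gamma$ is a disjoint union of cliques, and ultrahomogeneity forces all classes to have the same size and the number of classes to be $\omega$ or a finite integer, pinning the possibilities down to $\omega K_n$ with $1 \leq n \leq \omega$ or $nK_\omega$ with $2 \leq n < \omega$, with duals accounting for the complementary multipartite structures.

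The main obstacle is precisely the transitivity step in the third case: turning the single forbidden non-clique $H$ into a rigid global constraint on $\Gamma$ requires a delicate combinatorial amalgamation argument, and the bookkeeping with respect to duality, ensuring that the final list together with its duals is both exhaustive and non-redundant, is equally subtle. This is essentially the combinatorial core of Lachlan and Woodrow's original proof, and any alternative approach (for example, via stability-theoretic or constraint-satisfaction methods) would still have to confront this same transitivity-forcing step.
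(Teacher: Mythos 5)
This statement is not proved in the paper at all: it is quoted as an external Fact with a citation to Lachlan and Woodrow, so the only question is whether your sketch could plausibly stand in for their theorem, and it cannot, because your second case rests on a false claim. If $K_n$ (with $n\geq 3$ minimal) is omitted from $\Age(\Gamma)$, it does \emph{not} follow that $\Age(\Gamma)$ is the class of all $K_n$-free graphs: the graph $\omega K_{n-1}$ (countably many disjoint copies of $K_{n-1}$) is ultrahomogeneous, its minimal forbidden clique is exactly $K_n$, and yet its age omits the induced path on three vertices; dually, the finite-index complete multipartite graphs (the complements of $nK_\omega$) also land in your case 2. So the very graphs of items (3) and (4) of the statement are counterexamples to the step you propose there, and the ``induction on vertex count using free amalgamation inside $K$'' cannot be carried out, since for these graphs the age is simply not closed under free amalgamation (freely amalgamating two edges over a common vertex already produces a path that $\omega K_{n-1}$ omits). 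Your case 3, where you place the main difficulty, is in fact the easy degenerate case: if all cliques and all independent sets embed but some graph is omitted, a short amalgamation argument does force adjacency or non-adjacency to be an equivalence relation, yielding only $\omega K_\omega$ or its complement.

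The actual structure of the Lachlan--Woodrow proof is the reverse of yours: one first splits off the degenerate alternative that adjacency or non-adjacency is an equivalence relation (giving all of $\omega K_n$, $nK_\omega$ and their duals), and the genuinely hard content is then to show that in the remaining case the age must be, up to complementation, either all finite graphs or exactly the $K_n$-free finite graphs for some $n\geq 3$. That step is a long and delicate induction on amalgamation classes (building on Henson's and Woodrow's earlier work on the $K_3$-free case) and is not recoverable from the one-sentence amalgamation argument you offer; as written, your proposal both mislocates the difficulty and passes through an intermediate claim that is refuted by the theorem's own list. If you need this Fact, cite it as the paper does rather than attempting to reprove it.
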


Note that the dual of a graph has the same automorphism group, so
we can ignore the duals. 

We already saw in Example \ref{exa:free amalgamation classes} that
both the random graph and the Henson graph have a CIR. The graphs
$nK_{\omega}$ for $n<\omega$ cannot have a a dense conjugacy class
by Remark \ref{rem:constraint for cdcc compact quotient} as in the
case of the posets $\cal B_{n}$ described above. However, $\omega K_{n}$
for $1\leq n\leq\omega$ has a CIR, just like the cases $\cal C_{n}$
above. 

\subsection{\label{subsec:A-mix-of-CIRs}A mix of two Fra\"is\'e limits with
CIRs}

Suppose we are in the situation of Section \ref{subsec:A-mix-of}:
we have two amalgamation classes $K_{1},K_{2}$ with all the properties
listed there. Let $M_{1},M_{2}$ be the Fra\"is\'e limits of $K_{1},K_{2}$
respectively, and let $M$ be the Fra\"is\'e limit of $K$, the
class of finite $L_{1}\cup L_{2}$-structures $A$ such that $A\restriction L_{1}\in K_{1}$
and $A\restriction L_{2}\in K_{2}$. Add the extra assumption that
$L_{1}\cap L_{2}=\emptyset$. Suppose that $\ind^{1},\ind^{2}$ are
CIRs on $M_{1},M_{2}$ respectively. By Proposition \ref{prop:restriction of a mix of two Faisse},
we may assume that $M_{1}=M\restriction L_{1}$ and $M_{2}=M\restriction L_{2}$.
For finite subsets of $M$, define $A\ind_{C}B$ iff $A\ind_{C}^{1}B$
and $A\ind_{C}^{2}B$.
\begin{prop}
\label{prop:A mix of CIRs is a CIR}The relation $\ind$ is a CIR. 
\end{prop}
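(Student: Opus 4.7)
The plan is to verify each clause of Definition \ref{def:groovy} in turn, reducing wherever possible to the componentwise relations $\ind^{1}$ and $\ind^{2}$. The crucial structural remark is that the extra hypothesis $L_{1}\cap L_{2}=\emptyset$, combined with assumption (1) of Section \ref{subsec:A-mix-of}, forces the full language $L_{1}\cup L_{2}$ to be relational. Hence finitely generated substructures of $M$ coincide with finite subsets, and every atomic $(L_{1}\cup L_{2})$-formula is atomic in $L_{1}$, atomic in $L_{2}$, or an equality. Consequently the quantifier-free $(L_{1}\cup L_{2})$-type of a tuple over a set is the union of its QF $L_{1}$-type and its QF $L_{2}$-type, and by ultrahomogeneity of $M$, $M_{1}$, $M_{2}$ the complete type is determined by the QF type.

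Monotonicity, existence, and two-sided transitivity are then immediate, since each of these axioms holds componentwise for $\ind^{i}$ in $M_{i}$ and $\ind$ is defined as the conjunction. For stationarity over $\emptyset$, assume $A\ind B$, $A'\ind B'$, $a\equiv a'$, $b\equiv b'$; restricting types to $L_{i}$ gives $a\equiv^{i}a'$ and $b\equiv^{i}b'$ in $M_{i}$, so stationarity of $\ind^{i}$ yields $ab\equiv^{i}a'b'$ in $M_{i}$ for $i=1,2$, and the QF-type decomposition then produces $ab\equiv a'b'$ in $M$.

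The key step is extension. Given $A\ind_{C}B$ and a finite tuple $d$ in $M$, use extension for $\ind^{i}$ inside $M_{i}$ to obtain $d_{i}\in M$ with $d_{i}\equiv^{i}_{BC}d$ and $A\ind^{i}_{C}Bd_{i}$ for $i=1,2$. Let $F_{i}$ be the finite $L_{i}$-structure induced on $ABCd_{i}$ inside $M_{i}$; so $F_{i}\in K_{i}$. Fix a fresh tuple $d^{*}$ of the same length as $d$, identify each $d_{i}$ with $d^{*}$, and let $F$ be the $L_{1}\cup L_{2}$-structure on $ABC\cup d^{*}$ whose $L_{i}$-reduct is (the transported copy of) $F_{i}$. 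This is well-defined precisely because $L_{1}$ and $L_{2}$ are disjoint and relational, and by construction $F\restriction L_{i}\in K_{i}$, so $F\in K$. Since $M$ is the Fra\"iss\'e limit of $K$, ultrahomogeneity provides an embedding $F\hookrightarrow M$ fixing $ABC$ pointwise; let $d'$ be the image of $d^{*}$. Reading off QF types from this embedding and using $\Aut(M_{i})$-invariance of $\ind^{i}$ shows $d'\equiv^{i}_{BC}d$ and $A\ind^{i}_{C}Bd'$ for each $i$, which by the type decomposition gives $d'\equiv_{BC}d$ and $A\ind_{C}Bd'$, as required.

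The main obstacle is precisely the extension axiom: the extensions produced separately in the two reducts need not, a priori, be realized by a single element of $M$. The above argument circumvents this by extending in each reduct independently and then amalgamating the two independent $L_{i}$-structures into a single member of $K$, which is possible only because $L_{1}$ and $L_{2}$ are disjoint and relational; the universal property of the Fra\"iss\'e limit then lifts this amalgam back into $M$.
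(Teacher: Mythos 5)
Your proposal is correct and follows essentially the same route as the paper: the auxiliary axioms and stationarity are handled componentwise via the decomposition of quantifier-free $(L_{1}\cup L_{2})$-types (using that the disjointness assumption makes the full language relational), and extension is proved exactly as in the paper by extending separately in each reduct, gluing the two $L_{i}$-structures on $ABC\cup d^{*}$ into a member of $K$, and realizing it over $ABC$ in $M$ via the Fra\"iss\'e limit's extension property. Your final invocation of ultrahomogeneity as "an embedding fixing $ABC$ pointwise" is just the paper's step of taking a copy $D'\subseteq M$ and moving $A'B'C'$ back to $ABC$ by an automorphism, so there is no substantive difference.
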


\begin{proof}
Stationarity follows from the fact that by quantifier elimination,
for any finite tuples $a,a'$ from $M$, if $a\equiv a'$ in $L_{1}$
and in $L_{2}$, then $a\equiv a'$ in $L_{1}\cup L_{2}$. 

Extension: suppose that $A\ind_{C}B$, and we are given $d\in M$.
Let $d_{1}\in M_{1}$ be such that $d_{1}\equiv_{BC}d$ in $L_{1}$
and $A\ind_{C}^{1}Bd_{1}$. Similarly find $d_{2}$ for $\ind^{2}$
and $L_{2}$. Consider the finite structure $D$ with universe $ABCd$
where the $L_{1}\cup L_{2}$-structure on $ABC$ is as in $M$, and
such that its restriction to $L_{1}$, $L_{2}$ is $ABCd_{1}$, $ABCd_{2}$,
respectively. This structure exists since $L_{1}\cap L_{2}=\emptyset$
and by the assumptions of Section \ref{subsec:A-mix-of}, both languages
are relational. Thus, $D\in K$, so it has an isomorphic copy $D'\subseteq M$
containing copies $A',B',C',d'$ of $A,B,C,d$. As $M$ is ultrahomogeneous,
we can apply an automorphism $\sigma$ mapping $A'B'C'$ to $ABC$,
so that $A\ind_{C}B\sigma\left(d'\right)$, and $\sigma\left(d'\right)\equiv_{BC}d$.

The other properties are easy to check. 
\end{proof}
\begin{example}
\label{exa:The-random-ordered}The ordered random graph $M=\left(V,R,<\right)$.
It is the Fra\"is\'e limit of the class of finite linearly ordered
graphs in the language $\left\{ <,R\right\} $. It easily satisfies
all our assumptions with $L_{1}=\left\{ <\right\} $, $K_{1}$ the
class of finite linear orders and $L_{2}=\left\{ R\right\} $, $K_{2}$
the class of finite graphs. It has a CIR as both $\left(M,<\right)$
(which is a DLO) and $\left(M,R\right)$ (the random graph) have CIRs
by the two previous subsections. Similarly we may define the random
ordered hypergraph, and it too has a CIR. 
\end{example}

\subsection{\label{subsec:Trees}Trees}

The theory $T_{dt}$ (see Example \ref{exa:dense trees}) does not
admit a canonical independence relation. We shall give a precise (and
stronger) argument for this below in Corollary \ref{cor:Dense trees don't have a groovy ind relation},
but it is easy to see that a natural candidate fails. Namely, one
can try to define $A\ind_{C}B$ in such a way that if $C=\emptyset$
and $a,b$ are singletons then $a\ind b$ iff $a\wedge b<a,b$, and
for $a,b,c$ such that $c\ind b$, then $a\ind_{c}b$ iff $a\wedge c<c\wedge b$.
But then $a\wedge c\ind_{c}b$, $c\ind b$ but $a\wedge c\nind b$,
so transitivity fails. 

However, we can expand it in such a way that it does. We give two
such expansions.
\begin{example}
\label{exa:trees with predicate} Let $L_{dt}^{B}=\left\{ <,P,f,\wedge\right\} $
where $P$ is a unary predicate and $f$ is a unary function symbol,
and let $\DenseTrees^{B}$ be the model completion of the universal
$L_{dt}^{B}$-theory of trees where $P$ is a downwards closed linearly
ordered subset and $f\left(x\right)$ is the maximal element in $P$
which is $\leq x$. In other words, $\DenseTrees^{B}$ is the theory
of the Fra\"is\'e limit of the class of finite $L_{dt}^{B}$-structures
$M$ where $M\restriction\left\{ \wedge,<\right\} $ is a tree with
a meet function, $P^{M}$ is linearly ordered and downwards closed
and $f\left(x\right)=\max\set{y\leq x}{y\in P}$ (note that this class
has JEP and AP).  Then $\DenseTrees^{B}$ is the theory of dense
trees with a predicate for a branch (a maximal chain), it has quantifier
elimination and is $\omega$-categorical.  Let us see why $P$ is
a maximal chain in every model $M\models\DenseTrees^{B}$. Of course
it is downwards closed by definition, so if $a\in M$ is comparable
with $P$ but $a\notin P$, then $a>P$. As $\DenseTrees^{B}$ is
model-complete, $M$ is existentially closed (see Fact \ref{fact:Fraisse limits})
so there is some $b\in P$ (from $M)$ such that $f\left(a\right)<b$.
Thus, $a>b>f\left(a\right)$ which is a contradiction to the definition
of $f$. 

For three sets $A,B,C$, let $A\ind_{C}B$ iff $\left\langle AC\right\rangle \cap\left\langle BC\right\rangle \subseteq\left\langle C\right\rangle $
and for all $a\in\left\langle AC\right\rangle $ with $f\left(a\right)\notin\left\langle C\right\rangle $
and $b\in\left\langle BC\right\rangle $ with $f\left(b\right)\notin\left\langle C\right\rangle $
such that $f\left(a\right)\equiv_{C}f\left(b\right)$ (which is the
same as $f\left(a\right)\equiv_{f\left(C\right)}f\left(b\right)$),
$f\left(a\right)<f\left(b\right)$. Then $\ind$ is canonical.  The
only nontrivial axioms to check are stationarity over $\emptyset$,
extension and transitivity. 

Suppose that $A\ind B$. This just says that that $B$ is placed above
$A$ with respect to the branch $P$ (i.e., $f\left(a\right)<f\left(b\right)$
for all $a\in A,b\in B$). So $\ind$ is stationary by quantifier
elimination. 

Extension: suppose that $A\ind_{C}B$ and we are given a single element
$d$, which we may assume is not in $\left\langle BC\right\rangle $
and even that $f\left(d\right)\notin\left\langle BC\right\rangle $.
 First find some $d''$ such that $d''\equiv_{BC}f\left(d\right)$
and $d''$ is greater than every $f\left(a\right)$ such that $a\in\left\langle AC\right\rangle $
and $f\left(a\right)\equiv_{C}f\left(d\right)$. Then find $d'$ such
that $d'\equiv_{BC}d$ and $f\left(d'\right)=d''$ (and $\left\langle AC\right\rangle \cap\left\langle BCd'\right\rangle =\left\langle C\right\rangle $).

Transitivity: suppose that $A\ind_{DC}B$ and $D\ind_{C}B$ and we
have to show that $AD\ind_{C}B$. Suppose that $a\in\left\langle ADC\right\rangle ,f\left(a\right)\notin\left\langle C\right\rangle $
and $b\in\left\langle BC\right\rangle ,f\left(b\right)\notin\left\langle C\right\rangle $
are such that $f\left(a\right)\equiv_{C}f\left(b\right)$ but $f\left(b\right)\leq f\left(a\right)$.
Then there must be some $d\in\left\langle DC\right\rangle $ such
that $f\left(b\right)\leq d\leq f\left(a\right)$, as otherwise $f\left(a\right)\equiv_{CD}f\left(b\right)$.
But since $d\ind_{C}B$, $f\left(b\right)<d$, which implies that
$f\left(b\right)$ and $d$ do not have the same type over $C$, so
there must be some $c\in C$ between them, and in particular, it contradicts
our assumption that $f\left(a\right)\equiv_{C}f\left(b\right)$. The
other direction of transitivity is proved similarly. 
\end{example}

\begin{example}
\label{exa:trees with points} Let $L_{dt}^{p}=\left\{ <,p,\wedge\right\} $
where $p$ is a new constant. Let $\DenseTrees^{p}$ be the unique
completion of $\DenseTrees$ to $L_{dt}^{p}$. Let $M\models\DenseTrees^{p}$
be the unique countable model. To simplify notation, we identify $p$
with $p^{M}$. For three sets $A,B,C$, we let $A\ind_{C}B$ iff $\left\langle AB\right\rangle \cap\left\langle BC\right\rangle \subseteq\left\langle C\right\rangle $
and: 
\begin{enumerate}
\item For all $a\in\left\langle AC\right\rangle $ with $a\wedge p\notin\left\langle C\right\rangle $
and $b\in\left\langle BC\right\rangle $ with $b\wedge p\notin\left\langle C\right\rangle $
such that $a\wedge p\equiv_{C}b\wedge p$, $a\wedge p<b\wedge p$.
\item For all $a\in\left\langle AC\right\rangle $ such that $a>p$ with
no $c\in\left\langle C\right\rangle $ such that $a\wedge c>p$, and
all $b\in\left\langle BC\right\rangle $ with $b>p$ and no $c\in\left\langle C\right\rangle $
such that $b\wedge c>p$, $a\wedge b=p$.
\end{enumerate}
Then $\ind$ is canonical. The only nontrivial axioms to check are
stationarity over $\emptyset$, extension and transitivity. 

It is stationary over $\emptyset$ by elimination of quantifiers,
since $A\ind B$ iff $A'=\set{a\in A}{a\wedge p<p}$ is placed below
$B'=\set{b\in B}{b\wedge p<p}$ with respect to the points below $p$
while $A''=\set{a\in A}{a\geq p}$ and $B''=\set{b\in B}{b\geq p}$
are placed independently above $p$. 

Extension: suppose that $A\ind_{C}B$ and we are given $d$ such that
$d\notin\left\langle BC\right\rangle $. First assume that $d\wedge p<p$.
If $d\wedge p\notin\left\langle BC\right\rangle $, similarly to Example
\ref{exa:trees with predicate}, first find some $d''$ such that
$d''\equiv_{BC}d\wedge p$ and $d''>a\wedge p$ for all $a\in\left\langle AC\right\rangle $
with $a\wedge p\equiv_{C}d\wedge p$. Then find $d'$ such that $d'\equiv_{BC}d$
with $d'\wedge p\equiv d''$ (and $\left\langle BCd'\right\rangle \cap\left\langle AC\right\rangle =\left\langle C\right\rangle $).
 Now assume that $d>p$. If there is some $b\in\left\langle BC\right\rangle $
with $b\wedge d>p$, any $d'\equiv_{BC}d$ such that $\left\langle BCd'\right\rangle \cap\left\langle AC\right\rangle =\left\langle C\right\rangle $
will work. Otherwise find some $d'\equiv_{BC}d$ such that $d'\wedge a=p$
for all $a\in\left\langle AC\right\rangle $ with $a>p$. 

Transitivity: suppose that $A\ind_{DC}B$ and $D\ind_{C}B$ and we
have to show that $AD\ind_{C}B$. If we are in case (1) of the definition,
($a\in\left\langle ADC\right\rangle $, $a\wedge p\notin\left\langle C\right\rangle $,
etc.) then we proceed exactly as in Example \ref{exa:trees with predicate}.
Otherwise, suppose that $a\in\left\langle ADC\right\rangle $, $b\in\left\langle BC\right\rangle $
are as in case (2). If there is some $d\in\left\langle CD\right\rangle $
with $a\wedge d>p$, then for no $c\in\left\langle C\right\rangle $
is it the case that $c\wedge d>p$ (otherwise $a\wedge c>p$). Thus
$d\wedge b=p$ because $D\ind_{C}B$ hence $a\wedge b=p$ as required.
If there is no such $d$ then $a\wedge b=p$ because $A\ind_{DC}B$. 
\end{example}

Trees also satisfy the following interesting phenomenon.
\begin{prop}
\label{prop:in trees, every automorphism fixes a branch or a point}If
$M$ is a dense tree as in Example \ref{exa:dense trees} (i.e., the
model companion of the theory of trees in $\left\{ <,\wedge\right\} $)
then for every $\sigma\in\Aut\left(M\right)$ which does not have
any fixed points, there is a branch $B\subseteq M$ such that $\sigma\left(B\right)=B$. 
\end{prop}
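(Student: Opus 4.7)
The plan is to construct a maximal $\sigma$-invariant chain via Zorn's lemma after producing an ``axis'' for $\sigma$, and then to show it is in fact a branch; I expect the hard part to be the final maximality step, ruling out an element $x$ lying above the axis that is comparable with the chain but whose $\sigma$-orbit fails to be a chain.

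First, for any $a \in M$, the elements $c := a \wedge \sigma(a)$ and $\sigma(c) = \sigma(a) \wedge \sigma^2(a)$ both lie below $\sigma(a)$, so since predecessors in a tree form a chain, $c$ and $\sigma(c)$ are comparable and distinct (as $\sigma$ has no fixed points). Replacing $\sigma$ by $\sigma^{-1}$ if necessary, assume $c < \sigma(c)$, so that $\{\sigma^n(c) : n \in \Zz\}$ is a strictly increasing $\sigma$-invariant chain; its downward closure $B_0 := \bigcup_n \{y : y \le \sigma^n(c)\}$ is also a $\sigma$-invariant chain (a union of an increasing sequence of chains). Use Zorn's lemma to extend $B_0$ to a maximal $\sigma$-invariant chain $B$ in $M$, and assume for contradiction that $B$ is not a branch, so some $x \in M \setminus B$ is comparable with every element of $B$. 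Since $x \notin B_0$ but $x$ is comparable with each $\sigma^n(c)$, we must have $x > \sigma^n(c)$ for all $n$; and if the orbit $\{\sigma^k(x)\}$ were a chain, then $B \cup \{\sigma^k(x)\}$ would be a strictly larger $\sigma$-invariant chain (each $\sigma^k(x)$ is comparable with $B$ by $\sigma$-invariance), contradicting maximality. Absence of fixed points then forces $x$ and $\sigma(x)$ to be incomparable, since otherwise $\{\sigma^n(x)\}$ is strictly monotone and hence a chain. Let $c' := x \wedge \sigma(x) < x, \sigma(x)$; it is comparable with every $b \in B$ as the meet of two such elements, while $c'$ and $\sigma(c')$ are both $\le \sigma(x)$, hence comparable, and distinct, so $\{\sigma^n(c')\}$ is a strictly monotone $\sigma$-orbit, and repeating the maximality argument forces $c' \in B$.

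The heart of the argument is the following observation: any $b \in B$ with $b > c'$ must satisfy $b > x$ and $b > \sigma(x)$. Indeed, if $c' < b < x$ for some $b \in B$, then comparing $b$ with $\sigma(x)$ leads to a contradiction in both subcases: $b < \sigma(x)$ forces $b \le x \wedge \sigma(x) = c'$, contradicting $b > c'$; and $b > \sigma(x)$ puts $\sigma(x) < b < x$ on a single chain, contradicting the incomparability of $x$ and $\sigma(x)$. Now apply this observation to $b = \sigma(c')$ when $c' < \sigma(c')$, or to $b = \sigma^{-1}(c')$ when $c' > \sigma(c')$: both lie in $B$ and are strictly above $c'$. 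In the first case $\sigma(c') > \sigma(x)$ yields $c' > x$ upon applying $\sigma^{-1}$, contradicting $c' < x$; in the second case $\sigma^{-1}(c') > x$ yields $c' > \sigma(x)$ upon applying $\sigma$, contradicting $c' \le \sigma(x)$. Thus $B$ must be a branch, $\sigma$-invariant by construction.
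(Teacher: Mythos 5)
Your proof is correct and takes essentially the same route as the paper's: a maximal $\sigma$-invariant chain obtained from Zorn's lemma, then for a point $x$ extending it one shows $x$ and $\sigma(x)$ must be incomparable and that $c'=x\wedge\sigma(x)$ gets absorbed into the chain by maximality. The only difference is the endgame — the paper shows the chain is downward closed and would acquire a maximum (which would be a fixed point), whereas you reach the contradiction directly by applying your observation to $\sigma(c')$ or $\sigma^{-1}(c')$ — and both versions work.
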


\begin{proof}
Let $B$ be a maximal linearly ordered set such that $\sigma\left(B\right)=B$
(which exists by Zorn's lemma). We will show that $B$ is a branch.
Note that if $x\in B$ and $y<x$, then $B\cup\set{\sigma^{n}\left(y\right)}{n\in\Zz}$
is still a chain: given any $z\in B$ and any $n\in\Zz$, $\sigma^{n}\left(x\right),z$
are comparable and $\sigma^{n}\left(y\right)<\sigma^{n}\left(x\right)$
it follows that $\sigma^{n}\left(y\right)$ and $z$ are comparable
(if $z\leq\sigma^{n}\left(x\right)$ then both $\sigma^{n}\left(y\right),z\leq\sigma^{n}\left(x\right)$,
so they are comparable by the tree axioms, and if $\sigma^{n}\left(x\right)<z$,
then $\sigma^{n}\left(y\right)<z$), and for any $n,m\in\Zz$, $\sigma^{n}\left(y\right),\sigma^{m}\left(y\right)$
are comparable since $\sigma^{n}\left(x\right)$ and $\sigma^{m}\left(x\right)$
are (if $\sigma^{n}\left(x\right)\leq\sigma^{m}\left(x\right)$ then
both $\sigma^{n}\left(y\right),\sigma^{m}\left(y\right)\leq\sigma^{m}\left(x\right)$
so they are comparable by the tree axioms). Hence $B$ is downwards
closed. 

Now, as $\sigma$ has no fixed points, $B$ cannot have a maximum
(which would have to be a fixed point). Also, if $a\geq B$ and $\sigma\left(a\right)\geq a$
or $\sigma\left(a\right)\leq a$ then $B\cup\set{\sigma^{n}\left(a\right)}{n\in\Zz}$
is still a chain (since $\sigma^{n}\left(a\right)\geq\sigma^{n}\left(B\right)=B$
for all $n\in\Zz$), so $a\in B$.

If $B$ is not a branch (in particular, if $B=\emptyset$, which we
haven't ruled out yet), there is some $a\in M$ such that $B<a$.
Let $b=\sigma\left(a\right)\neq a$ (and by the above, $b,a$ are
not comparable), so $B<b$. Hence $B\leq\left(a\wedge b\right)<a,b$.
Now, $\sigma\left(a\wedge b\right)<\sigma\left(a\right)=b$, so $a\wedge b$
and $\sigma\left(a\wedge b\right)$ are comparable. The previous paragraph
implies that $a\wedge b\in B$. But then $B$ has a maximum \textemdash{}
contradiction. 
\end{proof}

\section{Having finite topological rank }

In this section we will find some criteria that ensure that $G$ has
finite topological rank.

\subsection{$\omega$-categorical stable theories }
\begin{prop}
If $T$ is stable $\omega$-categorical, $M\models T$ is countable
and $\Aut\left(\acl^{\eq}\left(\emptyset\right)\right)$ is finite,
then $\Aut\left(M\right)$ has finite topological rank. 
\end{prop}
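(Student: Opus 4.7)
The plan is to reduce to Example \ref{exa:T stable with emptyset stationary} by passing to the canonical expansion that names $\acl^{\eq}(\emptyset)$, and then to lift topological 2-generation of $G^{0}=\Aut(M')$ to topological finite generation of $G=\Aut(M)$ using the finiteness of the quotient $G/G^{0}$.

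First, take the expansion $M'$ of $M$ constructed in Proposition \ref{prop:G0 acts oligomorphically and has no CQ}, so that $\Aut(M')=G^{0}$, the group $G^{0}$ acts oligomorphically on $M$, and $M'$ is $\omega$-categorical. Stability is preserved under naming parameters and under passing to reducts, so $M'$ is stable. By idempotence of algebraic closure one has $\acl^{\eq}(\emptyset)^{M'}=\acl^{\eq}(\emptyset)^{M}$, and since every element of this set is named in $M'$, it coincides with $\dcl^{\eq}(\emptyset)^{M'}$. Hence Example \ref{exa:T stable with emptyset stationary} applies, giving nonforking as a CIR on $M'$. Corollary \ref{cor:CIR in omega-cat implies repulsive} together with Corollary \ref{cor:repulsive implies cyclically dense} now produces $f_{1},f_{2}\in G^{0}$ whose generated subgroup is dense in $G^{0}$.

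To finish, the proof of Proposition \ref{prop:Fixing acl(0) is compact quotient in omega-cat} gives a topological-group isomorphism $G/G^{0}\cong\Aut(\acl^{\eq}(\emptyset))$, so by hypothesis $[G:G^{0}]$ is finite. Being a closed subgroup of finite index, $G^{0}$ is then open in $G$, and its cosets are clopen. Choose representatives $g_{1},\dots,g_{k}\in G$ for $G/G^{0}$; for each $i$, the translate $g_{i}\cdot\overline{\langle f_{1},f_{2}\rangle}$ is dense in the clopen coset $g_{i}G^{0}$, so the subgroup generated by the finite set $\{f_{1},f_{2},g_{1},\dots,g_{k}\}$ has closure meeting every coset of $G^{0}$ densely and therefore equals $G$. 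This shows $G$ has topological rank at most $k+2$. No step is genuinely hard: the only point that needs a sentence of justification is the transfer of the algebraic hypothesis to $M'$ (immediate from idempotence of $\acl^{\eq}$), and the final lifting is pure topological group theory made possible by $G^{0}$ being open.
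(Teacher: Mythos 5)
Your proposal is correct and follows essentially the same route as the paper: pass to the expansion naming $\acl^{\eq}\left(\emptyset\right)$ (equivalently, to $G^{0}=\Aut\left(M'\right)$), apply Example \ref{exa:T stable with emptyset stationary} together with Corollary \ref{cor:having a groovy implies dense} to get topological $2$-generation of $G^{0}$, and then use finitely many coset representatives of the finite-index subgroup $G^{0}$ to generate a dense subgroup of $G$. The only differences are cosmetic: the paper works with $M^{\eq}$ with $\acl^{\eq}\left(\emptyset\right)$ named and checks density of the lifted subgroup directly on tuples, whereas you use the home-sort expansion $M'$ of Proposition \ref{prop:G0 acts oligomorphically and has no CQ} (whose statement also supplies the fact, slightly glossed in your write-up, that $\acl^{\eq}\left(\emptyset\right)=\dcl^{\eq}\left(\emptyset\right)$ holds in $M'$) and finish with a closure-of-cosets argument.
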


\begin{proof}
Without loss of generality, $M=M^{\eq}$ (if $S\subseteq\Aut\left(M^{\eq}\right)$
generates a dense subgroup, then $S\restriction M=\set{f\restriction M}{f\in S}$
generates a dense subgroup of $\Aut\left(M\right)$). Let $N=M_{\acl\left(\emptyset\right)}$
(i.e., name the elements in $\acl\left(\emptyset\right)$). Then $N$
is $\omega$-categorical by Propsition \ref{prop:G0 acts oligomorphically and has no CQ}.
Then in $N$, $\acl^{\eq}\left(\emptyset\right)=\dcl^{\eq}\left(\emptyset\right)$,
so by Example \ref{exa:T stable with emptyset stationary}, there
is a canonical independence relation in $N$, so $G^{0}=\Aut\left(N\right)$
is topologically 2-generated by Corollary \ref{cor:having a groovy implies dense},
say by $\left\{ f_{1},f_{2}\right\} $. Now, $\Aut\left(M\right)/G^{0}$
is finite by assumption, so let $S\subseteq\Aut\left(M\right)$ be
a finite set of representatives. Then $S\cup\left\{ f_{1},f_{2}\right\} $
generates a dense subgroup $\Aut\left(M\right)$: given two finite
tuples $\bar{a},\bar{b}$ from $M$ such that $\bar{a}\equiv\bar{b}$,
there is an automorphism $\sigma\in\Aut\left(M\right)$ such that
$\sigma\left(\bar{a}\right)=\bar{b}$. Also, there is some $f\in S$
such that $f^{-1}\sigma\in\Aut\left(N\right)$. Hence for some $g$
in the group generated by $\left\{ f_{1},f_{2}\right\} $, $g\left(\bar{a}\right)=f^{-1}\sigma\left(\bar{a}\right)=f^{-1}\left(\bar{b}\right)$,
so $fg\left(\bar{a}\right)=\bar{b}$.
\end{proof}
The following fact implies immediately the next result. 
\begin{fact}
\cite[Lemma 3.1]{MR1226301}\label{fact:finite acl} If $T$ is $\omega$-categorical
and $\omega$-stable and $M\models T$ is countable, then $\Aut\left(\acl^{\eq}\left(\emptyset\right)\right)$
is finite. 
\end{fact}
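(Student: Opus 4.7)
The plan is to use the description of $\Aut(\acl^{\eq}(\emptyset))$ from Proposition \ref{prop:Fixing acl(0) is compact quotient in omega-cat} and to exploit $\omega$-stability to force the associated profinite system to stabilize. By that proposition, $\Aut(\acl^{\eq}(\emptyset))$ is naturally the inverse limit $\varprojlim_X \Aut(X)$ taken over the directed system of finite $\emptyset$-definable sets $X \subseteq M^{\eq}$; since this is a profinite group, it is finite if and only if the system stabilizes, equivalently, if and only if there is a single finite $\emptyset$-definable $X_0 \subseteq M^{\eq}$ such that $\acl^{\eq}(\emptyset) \subseteq \dcl^{\eq}(X_0)$. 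So the task reduces to producing such an $X_0$.

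I would then combine $\omega$-stability with $\omega$-categoricity to produce $X_0$. The two key structural inputs are: (i) by $\omega$-categoricity, $S_1(\emptyset)$ is finite, so the home sort has finite Morley rank and hence, by Morley's theorem, every $\emptyset$-definable set in $M^{\eq}$ has finite Morley rank and finite Morley degree; (ii) by $\omega$-stability, every type is definable and non-forking is well-behaved, so the Galois orbits inside $\acl^{\eq}(\emptyset)$ --- each of which is a finite $\emptyset$-definable set --- have sizes and degrees uniformly controlled by the Morley invariants of the ambient sort. Together these give good local control over the finite pieces of the profinite group.

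The hard part is turning this local control into the global statement that one single $X_0$ works. One clean route is via the Cherlin--Harrington--Lachlan coordinatization theorem, which exhibits any $\omega$-categorical $\omega$-stable structure as coordinatized over finitely many strictly minimal sets; the Galois action on $\acl^{\eq}(\emptyset)$ is then determined by finitely much finite-set data, and one takes $X_0$ to encode that data. A more hands-on alternative is an induction on Morley rank: show that the pointwise stabilizer in $\Aut(\acl^{\eq}(\emptyset))$ of the Morley rank $\leq k$ part of $\acl^{\eq}(\emptyset)$ shrinks strictly until $k$ exceeds the rank of the home sort, at which point the stabilizer becomes trivial and the profinite system has collapsed to a finite group.

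It is worth noting that $\omega$-stability is genuinely needed here --- without it, one can have an infinite tower of independent finite $\emptyset$-definable equivalence relations (the Cherlin--Hrushovski construction recalled in Remark \ref{rem:constraint for cdcc compact quotient}) giving $\Aut(\acl^{\eq}(\emptyset)) = (\Zz/2\Zz)^{\omega}$. So the real content of the argument is precisely the step that rules out such infinite independent towers in the $\omega$-stable case, whether via coordinatization or via a direct rank-based argument; this is where I expect the main difficulty to lie.
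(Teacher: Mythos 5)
The first thing to note is that the paper itself gives no proof of this Fact: it is quoted verbatim from \cite[Lemma 3.1]{MR1226301}, so your attempt has to be measured against what a complete proof would require rather than against an argument in the text. Your opening reduction is correct and clean: $\Aut\left(\acl^{\eq}\left(\emptyset\right)\right)$ is the inverse limit of the finite groups of elementary permutations of finite $\emptyset$-definable $X\subseteq M^{\eq}$ (this is exactly how Proposition \ref{prop:Fixing acl(0) is compact quotient in omega-cat} presents it), and it is finite if and only if $\acl^{\eq}\left(\emptyset\right)\subseteq\dcl^{\eq}\left(X_{0}\right)$ for a single finite $\emptyset$-definable $X_{0}$. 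You also correctly identify, via the Cherlin--Hrushovski example of Remark \ref{rem:constraint for cdcc compact quotient}, that the whole content lies in producing such an $X_{0}$, i.e.\ in excluding infinitely many independent finite $\emptyset$-definable quotients spread over the sorts of $M^{\eq}$.

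That content, however, is not supplied, and the steps you sketch toward it do not hold up. First, ``$S_{1}\left(\emptyset\right)$ finite, so the home sort has finite Morley rank, by Morley's theorem'' is a non sequitur: $\omega$-stability only gives ordinal-valued Morley rank, and finiteness of the rank for $\omega$-categorical $\omega$-stable theories is itself a deep theorem of Cherlin--Harrington--Lachlan, not a consequence of having finitely many $1$-types. Second, the appeal to the coordinatization theorem (``the Galois action is determined by finitely much finite-set data, and one takes $X_{0}$ to encode that data'') is a pointer to where a proof might live, not an argument; extracting the finiteness of $\Aut\left(\acl^{\eq}\left(\emptyset\right)\right)$ from coordinatization is precisely the work to be done. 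Third, the fallback induction is not coherent as stated: elements of $\acl^{\eq}\left(\emptyset\right)$ are algebraic (so rank considerations must refer to the ambient sorts, whose Morley ranks are unbounded as one passes to $M^{n}$ and its quotients), hence ``until $k$ exceeds the rank of the home sort'' bounds nothing, and no reason is given why the pointwise stabilizer of the ``rank $\leq k$ part'' should strictly shrink or eventually be trivial. So what you have is a correct reformulation plus an honest admission that the essential step is missing; as it stands the proposal is a plan, not a proof, and the honest options are to cite \cite{MR1226301} as the paper does, or to genuinely import the Cherlin--Harrington--Lachlan machinery and carry out the finiteness argument in detail.
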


\begin{cor}
If $T$ is $\omega$-stable and $\omega$-categorical and $M\models T$
is countable, then $\Aut\left(M\right)$ has finite topological rank.
\end{cor}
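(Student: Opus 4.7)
The plan is to deduce this corollary directly from the immediately preceding proposition and the cited fact of Hrushovski; essentially all the work has already been done. First I would observe that $\omega$-stability of $T$ implies stability of $T$, so the stability hypothesis of the previous proposition is satisfied. Next, applying Fact \ref{fact:finite acl} to the $\omega$-stable, $\omega$-categorical theory $T$ and its countable model $M$ yields that $\Aut\left(\acl^{\eq}\left(\emptyset\right)\right)$ is finite. This verifies the remaining hypothesis of the previous proposition.

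Having confirmed both hypotheses, the conclusion of that proposition gives directly that $\Aut(M)$ has finite topological rank, which is exactly what we wanted to prove. So the proof is a single line once the two ingredients are identified.

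There is no real obstacle here, since all the substantive work is packaged in (a) the previous proposition, whose proof relied on the existence of a CIR in the expansion $M_{\acl\left(\emptyset\right)}$ via Example \ref{exa:T stable with emptyset stationary} and Corollary \ref{cor:having a groovy implies dense}, and (b) the cited Hrushovski fact guaranteeing finiteness of $\Aut\left(\acl^{\eq}\left(\emptyset\right)\right)$ in the $\omega$-stable $\omega$-categorical context. The only thing to double-check is the trivial implication that $\omega$-stable implies stable, which is standard.
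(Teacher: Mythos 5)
Your proof is correct and follows exactly the paper's route: the corollary is an immediate combination of the preceding proposition (stability plus finiteness of $\Aut\left(\acl^{\eq}\left(\emptyset\right)\right)$) with Fact \ref{fact:finite acl}, together with the trivial observation that $\omega$-stable implies stable. The only slip is attributional rather than mathematical: the cited fact is from Hodges--Hodkinson--Lascar--Shelah, not Hrushovski.
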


\subsection{Reducing finite topological rank to expansions}

Suppose that $M$ is countable let $G=\Aut\left(M\right)$. We now
want to explore the idea that perhaps by expanding $M$ (i.e., moving
to a subgroup), we can show that the topological rank of $G$ is small
by showing that the rank of the automorphism group of the expansion
is. Suppose that $H\leq G$. If $\left(G,H\right)$ has a compact
quotient (see Definition \ref{def:a pair having a CQ}), then we cannot
hope to deduce anything. For example, by Proposition \ref{prop:G0 acts oligomorphically and has no CQ}
we have that $G^{0}$ acts oligomorphically on $M$ and it can be
that $G^{0}$ has a cyclically dense conjugacy class (so topological
rank 2) while $G/G^{0}=\left(\Zz/2\Zz\right)^{\omega}$ (so $G$ is
not topologically finitely generated) \textemdash{} this happens in
the example described in in Remark \ref{rem:constraint for cdcc compact quotient},
see Example \ref{exa:naming the equivalence classes has SAP}.  Indeed,
we will see that $\left(G,H\right)$ having a compact quotient is
the only obstruction. 

\subsubsection{$\omega$-categorical structures with finitely many reducts}
\begin{thm}
\label{thm:Finitely many reducts and empty acl}Suppose that $H\leq G$
is closed  and that $\left(G,H\right)$ has no compact quotients.
If there are only finitely many closed groups between $G$ and $H$
then there is some $g\in G$ such that $H\cup\left\{ g\right\} $
topologically generate $G$. 
\end{thm}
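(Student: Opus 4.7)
Let $K_{1},\dots,K_{n}$ enumerate the finitely many proper closed subgroups of $G$ that contain $H$. The key observation is that any $g\in G\setminus\bigcup_{i}K_{i}$ works: the closed subgroup $\overline{\langle H\cup\{g\}\rangle}$ contains $H$, so by hypothesis it must be one of $G,K_{1},\dots,K_{n}$; since it also contains $g$, it cannot be any $K_{i}$, and hence equals $G$. It therefore suffices to show that $\bigcup_{i=1}^{n}K_{i}\neq G$.

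Assume for contradiction that $G=\bigcup_{i=1}^{n}K_{i}$. By the classical theorem of B.~H.~Neumann on coverings of a group by cosets of subgroups, at least one of the $K_{i}$ has finite index in $G$; fix such a $K=K_{i}$ and let $N=\bigcap_{g\in G}gKg^{-1}$ be its normal core. Since $[G:K]<\infty$ implies only finitely many conjugates, $N$ is a finite intersection of closed subgroups, hence closed; it is normal by construction and of finite index in $G$, so $G/N$ is a finite, and in particular compact Hausdorff, group.

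Applying Proposition~\ref{prop:no compact quotients} to the hypothesis that $(G,H)$ has no compact quotients, we obtain $NH=G$. But $H\subseteq K$ and $N\subseteq K$, so $NH\subseteq K\subsetneq G$, a contradiction. Hence $G\setminus\bigcup_{i}K_{i}$ is nonempty and any of its elements is the desired $g$. The main point requiring care is identifying Neumann's theorem as the right tool for passing from a cover by proper subgroups to the existence of one of finite index; once that step is in place, Proposition~\ref{prop:no compact quotients} closes the argument with no further subtleties. (Should one prefer to avoid invoking Neumann, an alternative route is to apply the Baire category theorem to the Polish group $G$ to make at least one $K_{i}$ open and then argue by a further clopen/nowhere-dense dichotomy, but the finite-index core step remains essential and Neumann's theorem packages it most cleanly.)
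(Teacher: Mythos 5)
Your proposal is correct and follows essentially the same route as the paper: Neumann's covering theorem, the passage to a closed normal finite-index core, and Proposition~\ref{prop:no compact quotients} to force $NH=G$, followed by the observation that any $g$ outside the union of the intermediate subgroups topologically generates $G$ together with $H$. The only difference is cosmetic — the paper first rules out finite index for each intermediate subgroup and then applies Neumann directly, whereas you argue by contradiction from a covering — so the two arguments are the same in substance.
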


\begin{rem}
The condition of having finitely many closed groups in the theorem
holds when for instance $M$ is a reduct of an $\omega$-categorical
structure $M'$ where $H=\Aut\left(M'\right)$, and $M'$ has only
finitely many reducts up to bi-definability. 
\end{rem}

\begin{proof}
Let $\set{H_{i}}{i<n}$ be the family of closed proper subgroups of
$G$ containing $H$ (which is finite by assumption). If $\left[G:H_{i}\right]<\infty$
for some $i<n$, then there would be a closed normal proper subgroup
$N_{i}\trianglelefteq G$ of finite index such that $N_{i}\leq H_{i}$
(in general, if $H'\leq G$ is closed of finite index, then there
is a closed normal subgroup $N\leq H'$, $N\trianglelefteq G$ such
that $\left[G:N\right]<\infty$. In fact, $N=\bigcap\set{gH'g^{-1}}{g\in G}$
and this intersection is finite as it is the orbit of $H'$ under
the action of $G$ on conjugates of $H'$ and its stabilizer contains
$H'$).  But then $N_{i}H=G$ by assumption and Proposition \ref{prop:no compact quotients},
so $G=N_{i}H\subseteq H_{i}H=H_{i}$ contradicting the fact that $H_{i}$
was a proper subgroup. 

By a theorem of Neumann \cite[Lemma 4.1]{MR0062122}, there is some
$g\in G\backslash\bigcup\set{H_{i}}{i<n}$. If $G\neq\cl\left(\left\langle H\cup\left\{ g\right\} \right\rangle \right)$
(the topological closure of the group generated by $H\cup\left\{ g\right\} $),
then $\cl\left(\left\langle H\cup\left\{ g\right\} \right\rangle \right)$
is one of the groups $H_{i}$, contradicting the choice of $g$. 
\end{proof}
\begin{cor}
\label{cor:finite topological rank from finitely many reducts}If
$G$ and $H$ are as in Theorem \ref{thm:Finitely many reducts and empty acl}
and $H$ has finite topological rank then so does $G$.
\end{cor}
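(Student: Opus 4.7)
The proof is essentially immediate from Theorem \ref{thm:Finitely many reducts and empty acl}, so the plan is very short. First I would apply that theorem verbatim to obtain some single element $g \in G$ such that the group generated by $H \cup \{g\}$ is dense in $G$. Since $H$ is assumed to have finite topological rank, I fix a finite set $\{h_1,\dots,h_n\}\subseteq H$ whose generated subgroup is dense in $H$, i.e.\ $\overline{\langle h_1,\dots,h_n\rangle}=H$.

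The claim is then that $\{h_1,\dots,h_n,g\}$ topologically generates $G$, which gives finite topological rank. The argument is a soft closure chase: letting $K=\overline{\langle h_1,\dots,h_n,g\rangle}$, $K$ is a closed subgroup of $G$ containing $h_1,\dots,h_n$, so it contains their closed generated subgroup, namely $H$. Thus $K\supseteq H\cup\{g\}$, and being closed it contains $\overline{\langle H\cup\{g\}\rangle}=G$, whence $K=G$.

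There is no real obstacle here; the only content is Theorem \ref{thm:Finitely many reducts and empty acl}. The one thing worth double-checking is the elementary fact that the closure of the group generated by a union of sets $A\cup B$ equals the closure of the group generated by $A$ together with the closure of the group generated by any dense generating set for the closure of $\langle B\rangle$ — which is exactly the closure computation performed above, and uses nothing beyond that closed subgroups of a topological group are closed under the group operations.
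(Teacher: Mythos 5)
Your proof is correct and is exactly the intended argument: the paper states the corollary without proof, treating it as immediate from Theorem \ref{thm:Finitely many reducts and empty acl} in just the way you spell out. The closure chase is fine, noting only (as you implicitly use) that since $H$ is closed in $G$, the closure in $G$ of $\left\langle h_{1},\ldots,h_{n}\right\rangle$ is all of $H$, so $\left\{ h_{1},\ldots,h_{n},g\right\}$ indeed topologically generates $G$.
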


By Example \ref{exa:If G0=00003DG, then no compact quotients}, in
the $\omega$-categorical context we get that if $\acl^{\eq}\left(\emptyset\right)=\dcl^{\eq}\left(\emptyset\right)$
in $M$ and $M'$ is an expansion having finitely many reducts, then
we can apply Corollary \ref{cor:finite topological rank from finitely many reducts}.
This is the case, for instance, when $M'$ is $\left(\Qq,<\right)$
(see \cite{DBLP:journals/jsyml/JunkerZ08}). By Lemma \cite[Lemma 2.10]{DBLP:journals/jsyml/JunkerZ08},
an example of such a reduct of DLO is given by the countable dense
circular order, which is the structure with universe $\Qq$, and a
ternary relation $C\left(x,y,z\right)$ given by $C\left(x,y,z\right)\Leftrightarrow x<y<z\vee y<z<x\vee z<x<y$.
\begin{cor}
\label{cor:Circular order has no CIR but degree <=00003D3}$\Aut\left(\Qq,C\right)$
has topological rank $\leq3$, but $\left(\Qq,C\right)$ has no CIR. 
\end{cor}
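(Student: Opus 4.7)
The plan is to split the statement into its two parts.

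For the bound on topological rank, let $G=\Aut(\Qq,C)$ and $H=\Aut(\Qq,<)$. Since $(\Qq,<)$ expands $(\Qq,C)$, $H$ is a closed subgroup of $G$. In $(\Qq,C)$ one checks $\acl^{\eq}(\emptyset)=\dcl^{\eq}(\emptyset)$ (the action is $1$-transitive and carries no nontrivial finite $\emptyset$-definable quotient in $M^{\eq}$), so by Example~\ref{exa:If G0=00003DG, then no compact quotients} the pair $(G,H)$ has no compact quotients. By the Junker--Ziegler classification of reducts of $(\Qq,<)$ (\cite{DBLP:journals/jsyml/JunkerZ08}) there are only finitely many closed groups between $H$ and $G$. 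Apply Theorem~\ref{thm:Finitely many reducts and empty acl} to obtain a single $g\in G$ with $H\cup\{g\}$ topologically generating $G$; since the CIR of Example~\ref{exa:DLO has groovy} combined with Corollary~\ref{cor:having a groovy implies dense} makes $H$ topologically $2$-generated, adding $g$ yields topological rank $\le 3$ for $G$.

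For the non-existence of a CIR on $(\Qq,C)$, I would argue by contradiction. Suppose $\ind$ is a CIR; the language is relational, so $\ind$ is defined on finitely generated substructures, and Lemma~\ref{lem:CIR implies just over 0} yields copies $M_0,M_1\cong(\Qq,C)$ inside $\Qq$ with $M_0\ind^{ns} M_1$. Fix $a\in M_0$ and distinct $b,c\in M_1\setminus\{a\}$; this is possible because $M_1$ is infinite. The group $\Aut(\Qq,C)$ is $2$-transitive on ordered pairs of distinct elements: viewing $\Qq$ as (the dense part of) a circle, the ``half-rotation'' that swaps $b$ and $c$ by exchanging the two arcs they determine is an automorphism. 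Hence some $\tau\in\Aut(M_1)$ swaps $b$ and $c$.

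By quantifier elimination $\tau$ extends to an automorphism of the ambient structure, and then the non-splitting characterisation from the definition of $\ind^{ns}$ (a partial automorphism of $M_1$ extending to the ambient model can be re-extended so as to fix $M_0$ pointwise) yields $\sigma\in\Aut(\Qq,C)$ with $\sigma\restriction M_0=\id$ and $\sigma\restriction M_1=\tau$. So $\sigma(a)=a$ while $\sigma$ swaps $b,c$. However, over the parameter $a$ the ternary relation $C$ defines a dense linear order $<_a$ on $\Qq\setminus\{a\}$ (the cut of the circle at $a$), and $\sigma$ would restrict to an automorphism of this DLO that exchanges two distinct elements, which is impossible.

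The main subtlety is the extension step: one needs to run the back-and-forth for the \emph{infinite} set $M_0$, using at each stage the ultrahomogeneity of $(\Qq,C)$ together with the hypothesis that $\tp(M_0/M_1)$ does not split over $\emptyset$; this is a routine but delicate application of the formulation of non-splitting given in the paper.
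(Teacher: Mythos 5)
The first half of your proposal (rank $\leq3$) is exactly the paper's route: $H=\Aut\left(\Qq,<\right)$ is topologically $2$-generated via Example \ref{exa:DLO has groovy} and Corollary \ref{cor:having a groovy implies dense}, and Theorem \ref{thm:Finitely many reducts and empty acl}, together with the Junker--Ziegler classification and the no-compact-quotient observation, adds one more generator. The gap is in the second half, precisely at the step you flag as the ``main subtlety''. Lemma \ref{lem:CIR implies just over 0} gives copies $M_{0},M_{1}$ of $M$ with $M_{0}\ind^{ns}M_{1}$, but it does \emph{not} give disjoint copies; indeed the copies produced in the proof of Theorem \ref{thm:existence of repulsive automorphism-ultrahomogeneous} overlap. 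Your $\tau$ (the half-rotation exchanging the two arcs determined by $b$ and $c$) is fixed-point free, so if $M_{0}\cap M_{1}\neq\emptyset$ there is no map at all agreeing with $\id$ on $M_{0}$ and with $\tau$ on $M_{1}$, and no back-and-forth can produce your $\sigma$. Moreover, even granting disjointness, the non-splitting characterization in the paper concerns finite tuples, and extending an elementary map with \emph{infinite} domain to an automorphism of the countable model is not routine: countable $\omega$-categorical structures are homogeneous only for finite maps, and such extensions can genuinely fail. So the argument as written does not go through.

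The repair is small and needs none of the infinite machinery: restrict to the finite configuration. By monotonicity of non-splitting, $a\ind^{ns}\left(b,c\right)$ for $a\in M_{0}$ and distinct $b,c\in M_{1}\backslash\left\{ a\right\} $; by quantifier elimination $\left(b,c\right)\equiv\left(c,b\right)$, so non-splitting gives $\left(b,c\right)\equiv_{a}\left(c,b\right)$, and homogeneity (for finite maps) yields $\sigma\in\Aut\left(\Qq,C\right)$ fixing $a$ and swapping $b,c$ \textemdash{} contradicting that exactly one of $C\left(a,b,c\right)$, $C\left(a,c,b\right)$ holds (this is your $<_{a}$ argument). With this fix your proof is a correct variant of the paper's, which instead extracts from the lemma a non-splitting $1$-type $q$ over $\Qq$, notes it cannot be realized, and uses $2$-transitivity to force both $C\left(0,x,1\right)$ and $C\left(1,x,0\right)$ into $q$.
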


\begin{proof}
We only have to show that it has no CIR. By Lemma \ref{lem:CIR implies just over 0},
if there was a CIR, then in particular there would be a type of a
single element $q\left(x\right)$ over $\Qq$ which does not split
over $\emptyset$. But by quantifier elimination, every tuple of two
distinct elements have the same type (i.e., $\Aut\left(\Qq,C\right)$
acts 2-transitively on $\Qq$). Now, $q$ cannot be realized in $\Qq$
and must contain $C\left(0,x,1\right)$ or $C\left(1,x,0\right)$,
hence both, which is a contradiction. 
\end{proof}
\begin{rem}
\label{rem:expanding a circular order by a point has a CIR}For any
point $a\in\Qq$, the expansion $\left(\Qq,C,a\right)$ does have
a CIR. Indeed, in this case $C$ defines a dense linear order with
no endpoints on $\Qq\backslash\left\{ a\right\} $ by $b<c\iff C\left(a,b,c\right)$.
Since $\left(\Qq,<\right)$ has a CIR $\ind$ (Example \ref{exa:DLO has groovy}),
we can define $A\ind_{C}^{*}B$ by $A\backslash\left\{ a\right\} \ind_{C\backslash\left\{ a\right\} }B\backslash\left\{ a\right\} $.
Since for every finite tuples $b,c$, $b\equiv c$ in the expansion
iff $b\backslash a\equiv c\backslash a$ in the order, it follows
easily that $\ind^{*}$ is a CIR. 
\end{rem}

An even closer look at the reducts of DLO, gives the following result.
\begin{cor}
Every closed supergroup of $\Aut\left(\Qq,<\right)$ has topological
rank $\leq3$. 
\end{cor}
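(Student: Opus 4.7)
The plan is to combine the $2$-generation of $\Aut(\Qq,<)$ (Example \ref{exa:DLO has groovy} and Corollary \ref{cor:having a groovy implies dense}) with the classification of reducts of $(\Qq,<)$ from Junker-Ziegler, cited just before the statement. That classification describes the closed subgroups of $\mathrm{Sym}(\Qq)$ containing $\Aut(\Qq,<)$ as exactly five groups, which I will label $H_1 = \Aut(\Qq,<)$, $H_2 = \Aut(\Qq,\mathrm{Bet})$, $H_3 = \Aut(\Qq,C)$, $H_4 = \Aut(\Qq,S)$, and $H_5 = \mathrm{Sym}(\Qq)$; here $H_2$ and $H_3$ are incomparable and lie strictly between $H_1$ and $H_4$.

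I would first fix $f,g \in H_1$ topologically generating $H_1$. Given any closed supergroup $G$ of $H_1$, let $\mathcal{F}$ denote the (finite) family of closed supergroups of $H_1$ properly contained in $G$; by the classification $|\mathcal{F}| \leq 3$. The key step is to pick any $h \in G \setminus \bigcup \mathcal{F}$. Then $\overline{\langle H_1 \cup \{h\}\rangle}$ is a closed supergroup of $H_1$, so by the classification it equals one of the $H_i$; since it is contained in $G$ and contains $h$, it cannot belong to $\mathcal{F}$, forcing it to equal $G$. Combined with $\overline{\langle f,g\rangle}=H_1$, this yields $\overline{\langle f,g,h\rangle}=G$, bounding the topological rank of $G$ by three.

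The only verification to carry out is non-emptiness of $G\setminus\bigcup\mathcal{F}$. For $G\in\{H_1,H_2,H_3,H_5\}$ the family $\mathcal{F}$ contains at most one proper closed subgroup and non-emptiness is immediate. The main, slightly delicate, case is $G=H_4$, where $\mathcal{F}=\{H_1,H_2,H_3\}$ and one must show $H_4 \neq H_2 \cup H_3$. I would argue as follows: $H_3$ has index $2$ in $H_4$, so $H_4\setminus H_3$ is a single coset $r H_3$ for any orientation-reversing element $r\in H_4$; taking $r=r_l$ to be a reflection (which lies in $H_2$), the intersection $H_2 \cap (H_4\setminus H_3)$ equals the orientation-reversing part $r_l H_1$ of $H_2$, which is a proper subset of $r_l H_3$ since $H_1\subsetneq H_3$. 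Hence $H_4\setminus(H_2\cup H_3)$ is non-empty, as required. Neumann's lemma, as invoked in the proof of Theorem \ref{thm:Finitely many reducts and empty acl}, gives an alternative and uniform justification of non-emptiness.
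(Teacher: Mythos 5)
Your proof is correct and follows essentially the paper's route: both arguments combine the Junker--Ziegler classification of the closed groups between $\Aut\left(\Qq,<\right)$ and $\mathrm{Sym}\left(\Qq\right)$ with topological $2$-generation of $\Aut\left(\Qq,<\right)$ (via its CIR), and then adjoin a single element lying outside the union of the proper intermediate closed subgroups, the only delicate case being $G=\Aut\left(\Qq,S\right)$, which the paper dispatches by the remark that no group is the union of two proper subgroups and you dispatch by an equivalent explicit coset computation. Two harmless slips: for $G=\mathrm{Sym}\left(\Qq\right)$ your family $\mathcal{F}$ has four members, not at most one (its union is still the proper subgroup $\Aut\left(\Qq,S\right)$, so nothing breaks), and the closing appeal to Neumann's lemma ``as invoked in Theorem \ref{thm:Finitely many reducts and empty acl}'' is not actually available, since that invocation first rules out finite-index intermediate subgroups using the no-compact-quotients hypothesis, which fails for the pair $\left(\Aut\left(\Qq,S\right),\Aut\left(\Qq,<\right)\right)$ because $\Aut\left(\Qq,C\right)$ has index $2$ \textemdash{} avoiding Neumann's lemma is precisely the point of the paper's proof.
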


\begin{proof}
The diagram in \cite[page  867]{DBLP:journals/jsyml/JunkerZ08} of
the lattice of closed groups between $\Aut\left(\Qq,<\right)$ and
$\Aut\left(\Qq,=\right)$ shows that any such group contains at most
two incomparable closed subgroups. Since no group can be a union of
two of its proper subgroups, we do not need to use Neumanns's lemma
in the proof of Theorem \ref{thm:Finitely many reducts and empty acl}
above, allowing us to drop the assumption that $\left(G,H\right)$
has no compact quotients. 
\end{proof}

\subsubsection{A general reduction theorem}

In the next theorem we drop the assumption of having finitely many
reducts of the expansion (i.e., of having finitely many groups between
$H$ and $G$), but we compensate for it by assuming that $H$ acts
oligomorphically on $M$ and increasing the number of generators by
1. 
\begin{fact}
\label{fact:Udi David Evans on acl}\cite[Lemma 1.4]{MR1226301} Suppose
that $M$ is a countable $\omega$-saturated structure. Then for any
$A,B\subseteq M$, there is some $A'$ (in the monster model $\C$,
see just above Section \ref{subsec:A-mix-of}) such that $A'\equiv A$
and $A'\cap B\subseteq\acl\left(\emptyset\right)$.
\end{fact}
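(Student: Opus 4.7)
The plan is to convert the claim into a finite-consistency statement and finish with Neumann's coset-covering lemma \cite[Lemma 4.1]{MR0062122}, exactly in the spirit of the proof of Theorem~\ref{thm:Finitely many reducts and empty acl}. Enumerate $A$ as $(a_{\alpha})_{\alpha}$ and introduce, in variables $\bar{x}=(x_{\alpha})_{\alpha}$, the partial type
\[
\pi(\bar{x}) \;=\; \tp(A/\emptyset) \;\cup\; \{\,x_{\alpha}\neq b \,:\, a_{\alpha}\notin\acl(\emptyset),\ b\in B\,\}.
\]
Any realization $A'=(a'_{\alpha})$ of $\pi$ in $\C$ automatically satisfies $A'\equiv A$; moreover, since $\acl(\emptyset)$ is $\Aut(\C)$-invariant, we have $a_{\alpha}\in\acl(\emptyset)$ iff $a'_{\alpha}\in\acl(\emptyset)$, so the inequalities in $\pi$ force any $a'_{\alpha}\in B$ to be algebraic, yielding $A'\cap B\subseteq\acl(\emptyset)$. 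Hence it suffices to show that $\pi$ is consistent.

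By compactness, this reduces to the following finite case: given a finite subtuple $a=(a_1,\dots,a_n)$ of $A$ and a finite set $B_0\subseteq B$, writing $I_1=\{\,i\le n : a_i\notin\acl(\emptyset)\,\}$, one must find $\sigma\in\Aut(\C)$ with $\sigma(a_i)\notin B_0$ for every $i\in I_1$. Assume for contradiction no such $\sigma$ exists. Then every $\sigma\in\Aut(\C)$ satisfies $\sigma(a_i)=b$ for some pair $(i,b)\in I_1\times B_0$, so
\[
\Aut(\C) \;=\; \bigcup_{\substack{(i,b)\in I_1\times B_0\\ b\equiv a_i}} \sigma_{i,b}\cdot\operatorname{Stab}_{\Aut(\C)}(a_i),
\]
where $\sigma_{i,b}$ is any fixed automorphism with $\sigma_{i,b}(a_i)=b$ (the remaining pairs yield empty cosets and drop out). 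This exhibits $\Aut(\C)$ as a finite union of cosets of the point-stabilizers of the $a_i$.

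By Neumann's lemma, some stabilizer $\operatorname{Stab}_{\Aut(\C)}(a_i)$ must have finite index in $\Aut(\C)$; equivalently, the $\Aut(\C)$-orbit of some $a_i$ is finite. Since $\C$ is saturated and strongly homogeneous, a finite orbit is $\emptyset$-definable, so $a_i\in\acl(\emptyset)$, contradicting $i\in I_1$. This finishes the proof.

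The only nontrivial step is recognizing that the negation of the target statement packages precisely into a finite coset covering of $\Aut(\C)$ to which Neumann's lemma applies; the compactness reduction and the post-Neumann bookkeeping are routine. It is worth noting that the $\omega$-saturation hypothesis on $M$ plays no real role in the argument above (only the compatibility of $\acl(\emptyset)$ between $M$ and $\C$ is used), and the proof delivers $A'$ inside $\C$ as the statement requires.
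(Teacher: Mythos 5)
Your proof is correct: the compactness reduction, the coset decomposition $\{\sigma : \sigma(a_i)=b\}=\sigma_{i,b}\operatorname{Stab}(a_i)$, and the application of B.~H.~Neumann's covering lemma (forcing some $a_i$ with $i\in I_1$ to have finite orbit, hence to lie in $\acl(\emptyset)$ by saturation of $\C$) all go through. Note that the paper does not prove this statement at all \textemdash{} it is quoted as a Fact from \cite[Lemma 1.4]{MR1226301} \textemdash{} and your argument is essentially the classical one behind that reference, namely P.~M.~Neumann's separation lemma (no element of a finite tuple with infinite orbit can be forced into a fixed finite set by every automorphism) deduced from the coset-covering lemma. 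Your closing observation is also accurate: since $A'$ is only required to live in the monster, the $\omega$-saturation of $M$ itself is not needed; what the argument uses is saturation and homogeneity of $\C$ together with the fact that $\acl(\emptyset)$ is computed the same way in $M$ and in $\C$.
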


\begin{thm}
\label{thm:If no CQ then finitely generated}Suppose as usual that
$M$ is countable and $\omega$-categorical and let $G=\Aut\left(M\right)$.
Suppose that $H\leq G$ is closed and acts oligomorphically on $M$
and that $\left(G,H\right)$ has no compact quotients. Then there
are $g_{1},g_{2}\in G$ such that $H\cup\left\{ g_{1},g_{2}\right\} $
topologically generates $G$. 
\end{thm}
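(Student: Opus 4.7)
The plan is a simultaneous back-and-forth construction of $g_1,g_2\in G$, preceded by a reduction to the case in which $M$ satisfies $\acl^{\eq}(\emptyset)=\dcl^{\eq}(\emptyset)$, so that Fact~\ref{fact:Udi David Evans on acl} can serve as a working substitute for a CIR (which we no longer assume to exist).

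\textbf{Reduction.} Since $(G,H)$ has no compact quotients, Proposition~\ref{prop:no compact quotients} applied to the closed normal subgroup $G^0$ gives $G=HG^0$. It is therefore enough to find $g_1,g_2\in G^0$ topologically generating $G^0$ together with $H\cap G^0$, since the same two elements together with $H$ will then generate $HG^0=G$. By Proposition~\ref{prop:G0 acts oligomorphically and has no CQ}, $G^0=\Aut(M^{\star})$ for an $\omega$-categorical expansion $M^{\star}$ of $M$ in which $\acl^{\eq}(\emptyset)=\dcl^{\eq}(\emptyset)$. Moreover $H\cap G^0$ is still closed and oligomorphic on $M^{\star}$: point stabilizers in $H$ are open, their images in the profinite quotient $H/(H\cap G^0)\hookrightarrow G/G^0$ are therefore of finite index, so each $H$-orbit on $M^n$ splits into only finitely many $(H\cap G^0)$-orbits. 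Relabel and assume $\acl^{\eq}(\emptyset)=\dcl^{\eq}(\emptyset)$ in $M$ from the outset.

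\textbf{Main construction.} Enumerate all pairs $(\bar a_k,\bar b_k)$ of finite tuples of $M$ with $\bar a_k\equiv\bar b_k$, and fix an enumeration of $M$ itself. I will construct increasing chains of finite partial elementary maps $p_n\subseteq p_{n+1}$ and $q_n\subseteq q_{n+1}$, eventually yielding $g_1=\bigcup_n p_n$ and $g_2=\bigcup_n q_n$ in $G$, together with, at each stage $n$, a word $w_n$ in letters $x,y,z_1,\dots,z_{r_n}$ and chosen elements $h_{n,1},\dots,h_{n,r_n}\in H$, such that $w_n(p_n,q_n,h_{n,1},\dots,h_{n,r_n})$ is a well-defined finite elementary map taking $\bar a_n$ to $\bar b_n$. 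At each stage one also enlarges domains and images of $p_n,q_n$ to cover the next element of $M$, so that $g_1,g_2$ are total automorphisms. The extension at stage $n$ uses Fact~\ref{fact:Udi David Evans on acl} to place any newly prescribed image of $g_1$ or $g_2$ in a position $(\acl^{\eq}(\emptyset)=\dcl^{\eq}(\emptyset))$-disjoint from the finite data committed by the previous stages and by $\bar a_n,\bar b_n$; combined with the oligomorphicity of $H$, this provides enough room to choose the $h_{n,i}$ and the shape of $w_n$ so that the word actually sends $\bar a_n$ to $\bar b_n$ without disturbing earlier commitments.

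\textbf{Main obstacle.} The heart of the argument is the extension step. Without a CIR there is no stationarity available, so one cannot simply produce a single repulsive automorphism whose conjugates generate a dense subgroup as in Theorem~\ref{thm:existence of repulsive automorphism-ultrahomogeneous}; all one has is the weak ``existence modulo $\acl^{\eq}(\emptyset)$'' from Fact~\ref{fact:Udi David Evans on acl}. Two ingredients compensate: (i) since $H$ is oligomorphic there are only finitely many $H$-orbits inside each $G$-orbit on $M^n$, so the set of transitions that $\langle H,g_1,g_2\rangle$ must be able to realize is, orbit by orbit, finite; and (ii) the reduction to $\acl^{\eq}(\emptyset)=\dcl^{\eq}(\emptyset)$ turns the disjointness guaranteed by Fact~\ref{fact:Udi David Evans on acl} into genuine disjointness, so new stages of the construction are genuinely independent of the old. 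The delicate bookkeeping is to coordinate the partial maps $p_n,q_n$ and the words $w_m$ for $m<n$ so that the later extensions keep every earlier $w_m$ well-defined and value-preserving; this coordination is what forces the use of \emph{two} auxiliary generators rather than one as in Theorem~\ref{thm:Finitely many reducts and empty acl}, with $g_1$ providing the ``shift into fresh position'' and $g_2$ implementing the residual $H$-orbit permutations inside each $G$-orbit.
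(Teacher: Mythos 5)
Your reduction to the case $\acl^{\eq}\left(\emptyset\right)=\dcl^{\eq}\left(\emptyset\right)$ is fine: $G=HG^{0}$ by Proposition \ref{prop:no compact quotients}, the verification that $H\cap G^{0}$ is still closed and oligomorphic is correct, and the pair condition becomes vacuous in the expansion given by Proposition \ref{prop:G0 acts oligomorphically and has no CQ}. The problem is the main construction, where the entire content of the theorem is asserted rather than proved. The step ``choose the $h_{n,i}$ and the shape of $w_{n}$ so that the word actually sends $\bar{a}_{n}$ to $\bar{b}_{n}$ without disturbing earlier commitments'' is exactly what has to be established, and nothing in the proposal supplies a mechanism. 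In particular, the appeal to Fact \ref{fact:Udi David Evans on acl} does not do what you claim: that fact only produces a copy of a set whose intersection with the old data lies in $\acl^{\eq}\left(\emptyset\right)$, and disjointness of supports is not ``genuine independence'' in any sense that controls types. Without stationarity (there is no CIR here by hypothesis), knowing that the newly placed points are disjoint from the finite data committed so far tells you nothing about their type over that data, so there is no reason the required word exists, nor that later extensions keep the earlier words $w_{m}$ value-preserving. In examples as simple as DLO, disjoint finite sets can sit in many inequivalent relative positions, so the claimed ``room'' is illusory.

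Compare with what the paper actually does, which is not a word-by-word realization of elementary maps. There, Fact \ref{fact:Udi David Evans on acl} is applied once, to a whole countable elementary submodel $M_{0}'$ of the expansion corresponding to $H$, to produce a single $g_{1}\in G$ with $g_{1}\left(M_{0}^{\eq}\right)\cap M_{0}^{\eq}=\acl^{\eq}\left(\emptyset\right)$; this global choice (not stage-by-stage disjointness from finite sets) is what makes Claim \ref{claim:property of M''} work: any set that is $\emptyset$-definable in the expansion corresponding to $H_{1}=\cl\left(\left\langle H,g_{1}\right\rangle \right)$ and definable with parameters in $M$ has its code in both $M_{0}^{\eq}$ and $g_{1}\left(M_{0}^{\eq}\right)$, hence in $\acl^{\eq}\left(\emptyset\right)$, and the no-compact-quotients hypothesis upgrades this to $\emptyset$-definability. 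That claim is then the engine of the back-and-forth for $g_{2}$: a proper union $O_{s}$ of $H_{1}$-orbits inside a $G$-orbit is $\emptyset$-definable in the expansion but not $\emptyset$-definable in $M$, hence not definable over the finite set built so far, so one can find tuples $a_{0}\equiv_{A}a_{1}$ on opposite sides of $O_{s}$ and let $g_{2}$ (or $g_{2}^{-1}$) cross the boundary; in the limit the $\left\langle H_{1},g_{2}\right\rangle $-orbits coincide with the $G$-orbits, which gives density. Your proposal contains neither this definability claim nor the orbit-merging argument, and no substitute for them, so as it stands there is a genuine gap at the heart of the proof. If you want to salvage your outline, the missing ingredient to isolate and prove is precisely a statement of the form of Claim \ref{claim:property of M''}; your reduction would then slightly simplify its proof (the code lands in $\dcl^{\eq}\left(\emptyset\right)$ directly), but you still need $g_{1}$ to move an entire elementary submodel of the expansion almost off itself, not just to place finitely many new points disjointly at each stage.
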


\begin{proof}
Let $M'$ be an $\omega$-categorical expansion of $M$ to some language
$L'$ containing $L$ (the language of $M$) such that $H=\Aut\left(M'\right)$.
We use $'$ to indicate the expansion. In particular, $\C'$ denotes
the expansion of $\C$ to $L'$. 

By Fact \ref{fact:Udi David Evans on acl}, there is some $M_{0}$
such that $M_{0}\equiv M$ and $M_{0}^{\eq}\cap M^{\eq}=\acl^{\eq}\left(\emptyset\right)$
(apply the fact in $\C^{\eq}$). There is some automorphism $\sigma$
of $\C$ such that $\sigma\left(M_{0}\right)=M$. Let $N_{0}'$ be
a countable model containing $\sigma^{n}\left(M_{0}\right)$ for all
$n\in\Zz$. Let $N_{1}'$ be a countable model containing $\sigma^{n}\left(N_{0}'\right)$
for all $n\in\Zz$. Continue like this and finally let $N_{\omega}'=\bigcup\set{N_{i}'}{i<\omega}$.
So $M'\prec N'_{\omega}\prec\C'$ is countable and $\sigma\restriction N{}_{\omega}\in\Aut\left(N_{\omega}\right)$.
By $\omega$-categoricity (of $M'$) we may assume that $N_{\omega}'=M'$:
there is some $g_{1}\in\Aut\left(M\right)$ and $M_{0}'\prec M'$
such that $g_{1}\left(M_{0}^{\eq}\right)\cap M_{0}^{\eq}=\acl^{\eq}\left(\emptyset\right)$.

Then $H_{1}=\cl\left(\left\langle H,g_{1}\right\rangle \right)$ is
a closed group acting oligomorphically on $M$. Also, note that $\left(G,H_{1}\right)$
has no compact quotients. Let $M''$ be the reduct of $M'$, which
is also an expansion of $M$ that corresponds to $H_{1}$: $\Aut\left(M''\right)=H_{1}$.
As usual, we use $''$ to indicate that we work in this expansion.
\begin{claim}
\label{claim:property of M''}If $X\subseteq M^{n}$ is definable
over $\emptyset''$ (i.e., definable in $L''$ over $\emptyset$)
and $M$-definable (in $L$), then it is $\emptyset$-definable (in
$L$).
\end{claim}

\begin{proof}
First note that  it is enough to show that $X$ is $\acl_{L}^{\eq}\left(\emptyset\right)$-definable
(the code $\code X$ of $X$ belongs to $\dcl_{L''}^{\eq}\left(\emptyset\right)$
and to $\acl_{L}^{\eq}\left(\emptyset\right)$, and if it were not
in $\dcl_{L}^{\eq}\left(\emptyset\right)$ then there would be an
automorphism of $M$ moving it, but then by the no-compact quotient
assumption there would be an automorphism of $M''$ moving it as well
\textemdash{} contradiction). 

Now, since $X$ is $\emptyset''$-definable and $M$-definable, it
is definable over $M_{0}$ (because $M_{0}'\prec M'$), so its code
$\code X\in M_{0}^{\eq}$. In addition, $g_{1}\left(X\right)=X$,
so $X$ is definable over $g_{1}\left(M_{0}\right)$, hence $\code X\in g_{1}\left(M_{0}^{\eq}\right)$.
Together it is in $\acl^{\eq}\left(\emptyset\right)$, which is what
we wanted. 
\end{proof}
Now we construct $g_{2}$ by back-and-forth to ensure that $\cl\left(\left\langle H_{1},g_{2}\right\rangle \right)=G$. 

Suppose that we have constructed $g_{2}\restriction A$ for some finite
set $A$. Let $O$ be an orbit of the action of $G$ on $M^{m}$,
and we write it as $O=\bigcup\set{O_{i}}{i<n}$ where the $O_{i}$'s
are the orbits of the action of $H_{1}$ (recall that $H_{1}$ acts
oligomorphically on $M$, so there are only finitely many such orbits). 
\begin{claim}
\label{claim:extending g_2}For any subset $s\subsetneq n$ there
are $a,b\in O$ such that $a\in O_{s}=\bigcup\set{O_{i}}{i\in s},b\in O_{n\backslash s}$,
and $g_{2}\restriction A\cup\left\{ \left\langle a,b\right\rangle \right\} $
or $g_{2}\restriction A\cup\left\{ \left\langle b,a\right\rangle \right\} $
is an elementary map. 
\end{claim}

\begin{proof}
Note that $O_{s}$ is $\emptyset''$-definable. As it is not $\emptyset$-definable
(because $s\subsetneq n$), it is also not $M$-definable by Claim
\ref{claim:property of M''}. In particular, it is not $A$-definable.
Hence there are $a_{0}\in O_{s},a_{1}\in O_{n\backslash s}$ such
that $a_{0}\equiv_{A}a_{1}$. There is some $b$ such that $a_{0}A\equiv a_{1}A\equiv bg_{2}\left(A\right)$.
If $b\in O_{s}$, then $g_{2}\upharpoonright A\cup\left\{ \left\langle a_{1},b\right\rangle \right\} $
is the required map. Otherwise, pick $g_{2}\restriction A\cup\left\{ \left\langle a_{0},b\right\rangle \right\} $. 
\end{proof}
In the back-and-forth construction of $g_{2}$, we deal with all these
orbits (for every $m<\omega$, there are only finitely many) and all
these subsets $s$ and increase $g_{2}$ according to Claim \ref{claim:extending g_2}.
We claim that $g_{2}$ is such that $\cl\left(\left\langle H_{1},g_{2}\right\rangle \right)=G$.
Indeed, it is enough to show that every orbit $O$ of $G$ is also
an orbit of $\left\langle H_{1},g_{2}\right\rangle $. The orbit $O$
can be written as $\bigcup\set{O_{i}}{i<n}$ where the $O_{i}$'s
are the orbits of $H_{1}$, and also as $\bigcup\set{O'_{i}}{i\in I}$
where the $O'_{i}$'s are orbits of $\left\langle H_{1},g_{2}\right\rangle $.
Each such $O_{i}'$ is itself a union of $H_{1}$-orbits, so has the
form $O_{s}$ for some $s\subseteq n$. But by construction, if $s\neq n$
there are tuples $a\in O_{s},b\in O_{n\backslash s}$ such that either
$g_{2}$ or $g_{2}^{-1}$ maps $a$ to $b$ \textemdash{} contradiction.
So $s=n$, and $O_{i}'=O$. 
\end{proof}

\section{\label{sec:A-topological-dynamics}A topological dynamics consequence
of having a CIR }
\begin{defn}
Suppose that $M$ is a countable structure. Call an automorphism $\sigma\in G$\emph{
shifty} if there is some invariant binary relation on finite sets
in $M$, $\ind$ (the base will always be $\emptyset$) such that:
\begin{itemize}
\item (Monotonicity) If $A\ind B$ and $A'\subseteq A$, $B'\subseteq B$
then $A'\ind B'$.
\item (Right existence) For every finite tuple $a$ there is some $a'\equiv a$
such that $a\ind a'$ (by this we mean that sets enumerated by $a$,
$a'$ are independent). 
\item (Right shiftiness) If $A$ is finite and $b,b'$ are finite tuples
such that $b'\equiv b$ and $A\ind b'$, then there exists some $n<\omega$
such that  $b'\equiv_{A}\sigma^{n}\left(b\right)$. 
\end{itemize}
\end{defn}

\begin{lem}
If $\sigma$ is shifty then it also satisfies: 
\begin{itemize}
\item (Left existence) For every finite tuple $a$ there is some $a'\equiv a$
such that $a'\ind a$.
\item (Left shiftiness) If $A$ is finite and $b,b'$ are finite tuples
such that $b'\equiv b$ and $b'\ind A$, then there exists some $n<\omega$
such that  $b'\equiv_{A}\sigma^{-n}\left(b\right)$. 
\end{itemize}
\end{lem}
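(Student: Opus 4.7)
The plan is to deduce both left properties from the right ones by transporting configurations via automorphisms of $M$, using the homogeneity that is implicit in the paper's setting (all the countable structures of interest are at least $\omega$-homogeneous, so that $\equiv$-equivalent tuples are $\Aut(M)$-conjugate).

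For left existence, I would first apply right existence to produce $a^{*}\equiv a$ with $a\ind a^{*}$. Since $a^{*}\equiv a$, pick an automorphism $\tau\in\Aut(M)$ with $\tau(a^{*})=a$. By the $\Aut(M)$-invariance of $\ind$, applying $\tau$ to $a\ind a^{*}$ yields $\tau(a)\ind a$, and $\tau(a)\equiv a$ holds automatically because $\tau$ is an automorphism. Hence $a'=\tau(a)$ witnesses left existence.

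For left shiftiness the strategy is to rearrange the hypothesis so that right shiftiness becomes directly applicable, and then to conjugate by $\sigma^{-n}$ to flip the direction of the shift. Suppose $b'\equiv b$ and $b'\ind A$. Pick $\tau\in\Aut(M)$ with $\tau(b')=b$; by invariance of $\ind$ we get $b\ind A^{*}$ where $A^{*}=\tau(A)\equiv A$. Now apply right shiftiness with $b$ playing the role of ``$A$'' and $A,A^{*}$ playing the roles of ``$b,b'$'': since $A^{*}\equiv A$ and $b\ind A^{*}$, there exist $n<\omega$ and $\rho\in\Aut(M)$ fixing $b$ with $\rho(A^{*})=\sigma^{n}(A)$. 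Then $\rho\tau$ sends $(b',A)$ to $(b,\sigma^{n}(A))$, and therefore $\sigma^{-n}\circ\rho\tau$ sends $(b',A)$ to $(\sigma^{-n}(b),A)$, i.e., it fixes $A$ and sends $b'$ to $\sigma^{-n}(b)$. This is precisely $b'\equiv_{A}\sigma^{-n}(b)$.

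The main delicacy is the asymmetry of $\ind$: the hypothesis $b'\ind A$ of left shiftiness cannot simply be turned into $A\ind b'$, so right shiftiness cannot be invoked as stated. The workaround is to pre-compose with a homogeneity-given automorphism that realigns the tuples into a configuration that does satisfy right shiftiness, and then to exploit the invertibility of $\sigma$ to convert the positive shift produced by right shiftiness into the required negative one. Note that beyond invariance, monotonicity, right existence, and right shiftiness, no additional structure on $\ind$ is used.
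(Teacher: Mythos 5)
Your proposal is correct and follows essentially the same route as the paper: realign $b'$ with $b$ by an automorphism so that right shiftiness applies with the roles of the two sides swapped, then apply $\sigma^{-n}$ to convert the forward shift into a backward one (the paper phrases the last step as a chain of type equivalences $ab'\equiv a'b\equiv\sigma^{n}(a)b\equiv a\sigma^{-n}(b)$ rather than composing explicit automorphisms, but this is only cosmetic). The left existence argument is identical to the paper's.
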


\begin{proof}
Suppose that $\sigma$ is shifty, as witnessed by $\ind$. Given $a$,
there is some $a'\equiv a$ such that $a\ind a'$. Applying an automorphism
taking $a'$ to $a$ we get some $a''\equiv a$ such that $a''\ind a$,
which shows left existence. 

As for left shiftiness, suppose that $A$ is finite and enumerated
by $a$, $b,b'$ are finite tuples such that $b'\ind A$ and $b\equiv b'$.
Then applying an automorphism, we get some $a'$ such that $ab'\equiv a'b$,
so $b\ind a'$. Hence for some $n<\omega$, $a'\equiv_{b}\sigma^{n}\left(a\right)$.
From $a'b\equiv\sigma^{n}\left(a\right)b$ we get that $ab'\equiv a'b\equiv\sigma^{-n}\left(a'\right)\sigma^{-n}\left(b\right)\equiv a\sigma^{-n}\left(b\right)$,
i.e., $b'\equiv_{A}\sigma^{-n}\left(b\right)$. 
\end{proof}
\begin{prop}
The automorphism $\sigma$ is a shifty automorphism on $M$ iff for
any type $p\in S\left(\emptyset\right)$ (with finitely many variables),
letting $Y_{a}=\bigcap\set{\bigcup\set{\tp\left(a,\sigma^{n}\left(a'\right)\right)}{n<\omega}}{a'\equiv a}$
for any $a\models p$, the intersection $Y_{p}=\bigcap\set{Y_{a}}{a\models p}$
is nonempty.
\end{prop}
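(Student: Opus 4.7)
The plan is to prove each direction by extracting or constructing the right data. The forward implication unpacks right existence together with right shiftiness to produce a uniform candidate type $q \in Y_p$, while the reverse implication builds $\ind$ as the relation realized by the prescribed family $\{q_p\}_p$ on sufficiently large tuple extensions.

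For the forward direction, assume $\sigma$ is shifty via $\ind$. Fix $p \in S(\emptyset)$, pick a realization $a_0 \models p$, and apply right existence to obtain $a_0' \equiv a_0$ with $a_0 \ind a_0'$. Set $q = \tp(a_0, a_0')$. Given any other $a \models p$, use homogeneity (as available in the $\omega$-categorical and ultrahomogeneous settings where this proposition is used) to transport via an automorphism taking $a_0$ to $a$; this produces $a' \equiv a$ with $a \ind a'$ (by $\Aut(M)$-invariance of $\ind$) and $\tp(a, a') = q$. Now for any $a'' \equiv a$, apply right shiftiness with $A = \operatorname{range}(a)$, $b' = a'$, and $b = a''$ to find $n < \omega$ with $a' \equiv_a \sigma^n(a'')$; hence $q = \tp(a, a') = \tp(a, \sigma^n(a''))$. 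Since this holds for every $a \models p$, we conclude $q \in Y_p$.

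For the reverse direction, assume $Y_p \neq \emptyset$ for every $p$ and fix $q_p \in Y_p$. Define $A \ind B$ to hold precisely when there exist tuples $c, d$ of equal length in $M$ such that $A \subseteq \operatorname{range}(c)$, $B \subseteq \operatorname{range}(d)$, $\tp(c) = \tp(d) =: p$, and $\tp(c, d) = q_p$. Invariance under $\Aut(M)$ and monotonicity are immediate from the definition: automorphisms can be applied to the witnesses $c, d$, and shrinking $A$ or $B$ preserves the same witness pair. For right existence, given $a$, set $p = \tp(a)$ and realize $q_p(a, y)$ in $M$ (possible by $\omega$-saturation/homogeneity) to obtain $a' \equiv a$ with $(a, a') \models q_p$; then $(c, d) = (a, a')$ witnesses $a \ind a'$.

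Right shiftiness is the one step that requires care, and it is also the main obstacle. Assume $A \ind B'$ with $B' = \operatorname{range}(b')$ and $b' \equiv b$. Fix witnesses $c, d$ of the $\ind$-relation, choose an enumeration $a$ of $A$ as a sub-tuple of $c$, and exhibit $b'$ as a sub-tuple of $d$ via some index function $\iota$. Pick $g \in \Aut(M)$ with $g(b') = b$ and set $d_0 = g(d)$, so $d_0 \models p$ and the $\iota$-sub-tuple of $d_0$ is $b$. Since $c \models p$ and $q_p \in Y_c$, some $n$ satisfies $\tp(c, \sigma^n(d_0)) = q_p = \tp(c, d)$, i.e.\ $d \equiv_c \sigma^n(d_0)$; restricting to the sub-tuples $a \subseteq c$ and $b' \subseteq d$ (respectively $\sigma^n(b) \subseteq \sigma^n(d_0)$) yields $b' \equiv_A \sigma^n(b)$, as required. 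The difficulty is purely bookkeeping: the hypothesis and conclusion of shiftiness are about tuples while $\ind$ is a relation on sets, so the argument hinges on selecting the right enumerations and sub-tuple index functions and on tracking how $\sigma$ commutes with sub-tuple extraction.
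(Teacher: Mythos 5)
Your proof is correct and takes essentially the same route as the paper's: in the forward direction one fixes a single independent pair, sets $q=\tp\left(a,a'\right)$, transports it by an automorphism and applies right shiftiness; in the reverse direction one defines $\ind$ via witnessing extensions whose type lies in $Y_{p}$ and checks invariance, monotonicity, right existence and right shiftiness. You are in fact a bit more explicit than the paper about the tuple-versus-set bookkeeping in the shiftiness check, and your appeal to $\omega$-saturation when realizing $q_{p}\left(a,y\right)$ is unnecessary, since $q_{p}\in Y_{a}$ already forces $q_{p}=\tp\left(a,\sigma^{n}\left(a\right)\right)$ for some $n$, giving a realization with first coordinate $a$.
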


\begin{proof}
Suppose that $\sigma$ is shifty, and fix some type $p\in S\left(\emptyset\right)$.
Let $a\models p$. By existence, there is some $a'\equiv a$ with
$a\ind a'$. Let $q=\tp\left(a,a'\right)$ and fix some $b\models p$.
Let $\tau\in\Aut\left(M\right)$ map $a$ to $b$ and let $b'=\tau\left(a'\right)$.
We have that $b\ind b'$ and hence by right shiftiness, $q=\tp\left(b,b'\right)\in Y_{b}$.
Since $b$ was arbitrary, $q\in Y_{p}$. 

Suppose that the right hand side holds. Given a finite tuple $a$
and $a'\equiv a$, write $a\ind^{*}a'$ iff $\tp\left(a,a'\right)\in Y_{p}$
where $p=\tp\left(a/\emptyset\right)$. For general finite sets $A,B$,
write $A\ind B$ iff there is some $C$ containing $A$ and $C'$
containing $B$ such that $C\ind^{*}C'$. Obviously, $\ind$ is invariant
and monotone. Right existence follows from the assumption that $Y_{p}\neq\emptyset$
for all $p\in S\left(\emptyset\right)$. Right shiftiness: suppose
that $a\ind^{*}a'$ and $a''\equiv a'$. Then $\tp\left(a,a'\right)\in Y_{p}$
and in particular it belongs to $Y_{a}$. By definition of $Y_{a}$,
$\tp\left(a,a'\right)\in\bigcup\set{\tp\left(a,\sigma^{n}\left(a''\right)\right)}{n<\omega}$,
so for some $n<\omega$, $aa'\equiv a_{i}\sigma^{n}\left(a''\right)$. 
\end{proof}
\begin{prop}
\label{prop:CIR implies shifty}If $M$ is an ultrahomogeneous structure
and $\ind$ is a CIR on finite subsets of $M$ which respects substructures,
then there exists a shifty automorphism $\sigma$ on $M$, as witnessed
by $\ind$.
\end{prop}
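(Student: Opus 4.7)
The plan is to verify that the strongly repulsive automorphism $\sigma$ already produced in Theorem \ref{thm:existence of repulsive automorphism-ultrahomogeneous} is in fact shifty, with the given CIR $\ind$ itself serving as the witnessing relation. That construction yields not only $\sigma \in \Aut(M)$ but also an enumeration $\bar b = \sequence{\bar b_s}{s \in S}$ of $M$ by integer intervals on which $\sigma$ acts by shift, i.e.\ $\sigma(b_{s,i}) = b_{s+1,i}$, and such that whenever $s, t$ are disjoint intervals contained in a common $[0,n-1]$ one has $\bar b \restriction s \ind \bar b \restriction t$ (this is property (4) of the proof of that theorem, in the case of empty base). Monotonicity of $\ind$ is part of the CIR axioms, and invariance under $\Aut(M)$ is built into the definition of CIR, so both of these requirements come for free.

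To verify right existence, I would argue as follows: any finite tuple $a$ lies in $\bar b \restriction s$ for some interval $s$, and for $n$ large enough the translate $s + n$ is disjoint from $s$. Monotonicity of $\ind$ then gives $a \ind \sigma^n(a)$, and setting $a' = \sigma^n(a) \equiv a$ is what is needed.

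For right shiftiness, suppose $A$ is finite and $b' \equiv b$ with $A \ind b'$. I would first embed both $A$ and (the set enumerated by) $b$ into a common $\bar b \restriction s$; then for all sufficiently large $n > 0$ the interval $s + n$ is disjoint from and lies strictly above $s$, so by the same monotonicity argument $A \ind \sigma^n(b)$. Now I would invoke stationarity of $\ind$ over $\emptyset$ applied to the two independent pairs $(A, b')$ and $(A, \sigma^n(b))$: since $A \equiv A$ and $b' \equiv b \equiv \sigma^n(b)$, stationarity yields $A b' \equiv A \sigma^n(b)$, that is, $b' \equiv_A \sigma^n(b)$, as required. Left existence and left shiftiness follow automatically from the lemma immediately preceding this proposition.

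The only real thing to be careful about is that $n$ may in fact be chosen positive, so that $\sigma^n$ (rather than $\sigma^{-n}$) witnesses right shiftiness; this is guaranteed by the construction in Theorem \ref{thm:existence of repulsive automorphism-ultrahomogeneous}, which places $s + n$ strictly above $s$ for positive $n$. Beyond this, no new construction is needed: the argument is essentially a direct verification that the axioms of a CIR are strong enough to upgrade the output of the previous theorem from ``strongly repulsive'' to ``shifty''.
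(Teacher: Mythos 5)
Your proposal is correct and follows essentially the same route as the paper: reuse the automorphism $\sigma$ built in Theorem \ref{thm:existence of repulsive automorphism-ultrahomogeneous}, get $A\ind\sigma^{n}(b)$ for large $n$ from property (4) of that construction (plus the fact that $\ind$ respects substructures and is monotone), and conclude $b'\equiv_{A}\sigma^{n}(b)$ by stationarity over $\emptyset$. The only cosmetic difference is that the paper obtains right existence directly from the CIR axioms (existence together with extension on the right) rather than from the block enumeration, which changes nothing.
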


\begin{proof}
Monotonicity and right existence are parts of the properties of a
CIR, so we only have to prove right shiftiness. Suppose that $A\ind b$
and $b'\equiv b$. By the proof of Theorem \ref{thm:existence of repulsive automorphism-ultrahomogeneous},
the repulsive automorphism $\sigma$ constructed there satisfies that
for some $n<\omega$, $A\ind\sigma^{n}\left(b'\right)$. By stationarity,
$b\equiv_{A}\sigma\left(b'\right)$. 
\end{proof}
Recall the definitions of flow and subflow from Section \ref{subsec:Ramsey, top dynamics}.

\begin{thm}
\label{thm:contains a subflow}Let $M$ be a countable homogeneous
structure and $G=\Aut\left(M\right)$. 

Suppose that $\sigma\in G$ is a shifty automorphism and that $\left(X,d\right)$
is a compact \uline{metric} $G$-flow. Then for every $x_{*}\in X$
there is some conjugate $\sigma_{*}\in G$ of $\sigma$ such that:
\begin{itemize}
\item [(*)] Both $\cl\set{\sigma_{*}^{n}\left(x_{*}\right)}{n<\omega}$
and $\cl\set{\sigma_{*}^{-n}\left(x_{*}\right)}{n<\omega}$ contain
a subflow of $X$. 
\end{itemize}
\end{thm}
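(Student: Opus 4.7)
Plan:

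I will construct $\tau \in G$ by a countable back-and-forth so that $\sigma_{*} := \tau \sigma \tau^{-1}$ satisfies $\overline{G \cdot x_{*}} \subseteq \cl\{\sigma_{*}^{n} x_{*} : n \geq 0\}$ and $\overline{G \cdot x_{*}} \subseteq \cl\{\sigma_{*}^{-n} x_{*} : n \geq 0\}$. Since $\overline{G \cdot x_{*}}$ is a non-empty compact $G$-invariant subset of $X$, it is a subflow and the theorem follows.

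First I reduce the density requirement to a discrete one. Fix a countable dense subset $\{g_{k}\}_{k < \omega}$ of $G$. Continuity of the orbit map $h \mapsto h \cdot x_{*}$ at each $g_{k}$ yields, for every $\ell > 0$, a finite tuple $a_{k, \ell} \subseteq M$ such that any $h \in G$ with $h(a_{k, \ell}) = g_{k}(a_{k, \ell})$ satisfies $d(h \cdot x_{*}, g_{k} \cdot x_{*}) < 1/\ell$. Thus it suffices to ensure, for every $(k, \ell) \in \omega^{2}$ and each sign $s \in \{+, -\}$, that there is $n > 0$ with $\sigma_{*}^{sn}(a_{k, \ell}) = g_{k}(a_{k, \ell})$, equivalently $\sigma^{sn}(\tau^{-1}(a_{k, \ell})) = \tau^{-1}(g_{k}(a_{k, \ell}))$.

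Next I enumerate these dynamical tasks alongside the usual surjectivity tasks and build $\tau$ as an increasing union of finite partial automorphisms $\tau_{i} : B_{i} \to A_{i}$ of $M$. To handle a positive dynamical task with target $(a, b) = (a_{k, \ell}, g_{k}(a_{k, \ell}))$ (so $a \equiv b$), I must find $c \in M$ and $n > 0$ with $B_{i} c \, \sigma^{n}(c) \equiv A_{i} \, a \, b$, and then extend $\tau_{i}$ by $c \mapsto a$, $\sigma^{n}(c) \mapsto b$. Pick any $(u_{0}, v_{0}) \in M^{2}$ with $B_{i} u_{0} v_{0} \equiv A_{i} a b$ (such a pair exists by homogeneity of $M$ since $\tau_{i}$ is a partial automorphism); then $u_{0} \equiv v_{0}$ over $\emptyset$. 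Setting $c = u_{0}$, the condition reduces to $\sigma^{n}(u_{0}) \equiv_{B_{i} u_{0}} v_{0}$ for some $n > 0$. Right shiftiness, applied with $A = B_{i} u_{0}$, $b = u_{0}$, $b' = v_{0}$, supplies such an $n$ provided $B_{i} u_{0} \ind v_{0}$; the negative direction is handled symmetrically via the derived left existence and left shiftiness.

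The main obstacle is arranging the independence hypothesis $B_{i} u_{0} \ind v_{0}$, since $\tp(B_{i} u_{0} v_{0}/\emptyset)$ is dictated by the task and the previously built portion of $\tau$. I would address this by inserting a preparatory step before each dynamical task: apply right existence to a tuple enumerating $B_{i}$ together with a provisional target $(u_{0}, v_{0})$, obtaining an independent copy $(B'_{i}, u'_{0}, v'_{0})$ with $B_{i} u_{0} v_{0} \ind B'_{i} u'_{0} v'_{0}$ (hence $B'_{i} u'_{0} \ind v'_{0}$ by monotonicity). Homogeneity of $M$ then allows me to extend $\tau_{i}$ so that this independent configuration is absorbed into its domain, and I re-express the task relative to the enlarged $B_{i}$, at which point the precondition of shiftiness is satisfied. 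The delicate point is verifying that this preparation composes over all tasks in the enumeration without collapsing the partial isomorphism, so that the back-and-forth still yields a bona fide element $\tau \in G$ fulfilling every positive and negative dynamical requirement simultaneously.
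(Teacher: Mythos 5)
Your plan sets out to prove something strictly stronger than the theorem: you aim for a conjugate $\sigma_*$ with $\cl\left(G\cdot x_*\right)\subseteq\cl\set{\sigma_*^{n}\left(x_*\right)}{n<\omega}$, i.e.\ you want the forward $\sigma_*$-orbit of $x_*$ itself to approximate every point of $g\cdot x_*$ for $g$ ranging over a dense subset of $G$. That stronger statement is false in general, so no amount of care in the back-and-forth can make the strategy work. Concretely, take $M=\left(\Qq,<\right)$ and $\sigma$ the translation $x\mapsto x+1$, which is shifty (witnessed by $A\ind B$ iff every element of $A$ lies below every element of $B$; monotonicity, right existence and right shiftiness are immediate). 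Let $X=S_{1}\left(M\right)$, the space of complete $1$-types over $M$, a compact metrizable $G$-flow, and let $x_*$ be the type just above $0$. Every conjugate $\sigma_*$ of $\sigma$ moves each rational strictly upward with orbits cofinal in $\Qq$, so $\sigma_*^{n}\left(x_*\right)$ is the type just above $\sigma_*^{n}\left(0\right)$ and the unique accumulation point of $\set{\sigma_*^{n}\left(x_*\right)}{n<\omega}$ is the type at $+\infty$. Thus $\cl\set{\sigma_*^{n}\left(x_*\right)}{n<\omega}$ does contain a subflow (the $G$-fixed point at $+\infty$, exactly as the theorem predicts) but does not contain $\cl\left(G\cdot x_*\right)$, which contains for instance the type just below $0$. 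So already your first reduction, ``it suffices to realize every task $\sigma_*^{\pm n}\left(a_{k,\ell}\right)=g_{k}\left(a_{k,\ell}\right)$'', aims at an unattainable target: for $g_{k}$ moving a tuple downward no power of any conjugate of $\sigma$ can do this.

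The same example pinpoints where your combinatorial step breaks and why the ``preparatory step'' cannot repair it. After pulling the task back through $\tau_i$ you must find $n$ with $\sigma^{n}\left(u_{0}\right)\equiv_{B_{i}u_{0}}v_{0}$, where $\tp\left(u_{0}v_{0}/\emptyset\right)$ is forced to equal $\tp\left(a_{k,\ell},g_{k}\left(a_{k,\ell}\right)\right)$; right shiftiness needs $B_{i}u_{0}\ind v_{0}$, hence in particular $u_{0}\ind v_{0}$, and for many prescribed types no independent realization exists at all (in the DLO example, $g_{k}\left(a\right)<a$ forces $v_{0}<u_{0}$ while $u_{0}\ind v_{0}$ forces $u_{0}<v_{0}$). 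Absorbing an independent \emph{copy} $\left(B_{i}',u_{0}',v_{0}'\right)$ into the domain does not make the actual prescribed pair independent; it silently replaces the task by one about the copy, and controlling $\sigma_*$ on the copy gives no bound on $d\left(\sigma_*^{n}x_*,g_{k}x_*\right)$. This is precisely why the paper's proof never tries to hit $g_{k}\left(a_{k,\ell}\right)$ on the nose: it only controls \emph{relative} positions of forward iterates, arranging for each $n$ exponents $k_{n,0},\ldots,k_{n,n-1}$ with $d\left(\sigma_*^{k_{n,i}}\left(x_*\right),g_{i}\left(\sigma_*^{k_{n,0}}\left(x_*\right)\right)\right)<1/n$, where the relevant configurations are freshly chosen (via left/right existence and a finite coloring coming from an $\varepsilon$-cover of $X$) so as to be independent over everything built so far, which is what makes shiftiness applicable; the subflow is then $\cl\left(Gy\right)$ for $y$ a limit of the anchor points $\sigma_*^{k_{n,0}}\left(x_*\right)$, not $\cl\left(Gx_*\right)$. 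To salvage your write-up you must weaken your target in this way; as it stands, the gap is not a missing verification but a false intermediate statement.
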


\begin{rem}
Note that Theorem \ref{thm:contains a subflow} implies that both
$\bigcap\set{\cl\set{\sigma_{*}^{n}\left(x_{0}\right)}{k\leq n<\omega}}{k<\omega}$
and $\bigcap\set{\cl\set{\sigma_{*}^{-n}\left(x_{0}\right)}{k\leq n<\omega}}{k<\omega}$
contain a subflow of $X$: if e.g., $Y_{0}$ is a flow contained in
the left space, then $GY_{0}=Y_{0}$, so $\sigma_{*}^{-k}\left(Y_{0}\right)\subseteq Y_{0}\subseteq\cl\set{\sigma_{*}^{n}\left(x_{*}\right)}{n<\omega}$,
hence $Y_{0}\subseteq\sigma_{*}^{k}\left(\cl\set{\sigma_{*}^{n}\left(x_{*}\right)}{n<\omega}\right)=\cl\set{\sigma_{*}^{n}\left(x_{*}\right)}{k\leq n<\omega}$. 
\end{rem}

Before the proof we note the following useful lemma. 
\begin{lem}
\label{lem:uinform bound}Suppose that $G$ is a topological group
acting continuously on a compact metric space $\left(X,d\right)$.
Then for every $0<\varepsilon$ there is some open neighborhood $U$
of $\id\in G$ such that for every $g,h\in G$ if $gh^{-1}\in U$
then for all $x\in X$ we have that $d\left(gx,hx\right)<\varepsilon$. 
\end{lem}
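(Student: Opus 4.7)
The plan is to reduce the statement to a one-sided uniform version: produce an open neighborhood $U$ of $\id$ such that $d(ky,y)<\varepsilon$ for every $k\in U$ and every $y\in X$. Once this is in hand, the lemma follows by setting $k=gh^{-1}$ and $y=hx$, so that $gx=k\cdot hx=ky$; as $x$ ranges over $X$, the point $y=hx$ also ranges over all of $X$ (the action is by homeomorphisms), whence $d(gx,hx)=d(ky,y)<\varepsilon$ for every $x\in X$ whenever $gh^{-1}\in U$.

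To produce such a $U$, I would fix $\varepsilon>0$ and use the continuity of the action map $\alpha:G\times X\to X$ at each pair of the form $(\id,y)$ with $y\in X$. This yields an open neighborhood $U_y$ of $\id$ in $G$ and an open neighborhood $V_y$ of $y$ in $X$, which we may shrink so that $V_y\subseteq B(y,\varepsilon/2)$, with the property that $\alpha(U_y\times V_y)\subseteq B(y,\varepsilon/2)$. By compactness of $X$, finitely many $V_{y_1},\ldots,V_{y_n}$ cover $X$; set $U=\bigcap_{i\leq n}U_{y_i}$, an open neighborhood of $\id$ in $G$. For any $k\in U$ and any $y\in X$, pick $i$ with $y\in V_{y_i}$; then $d(ky,y_i)<\varepsilon/2$ since $k\in U_{y_i}$, and $d(y_i,y)<\varepsilon/2$ since $y\in V_{y_i}$, so the triangle inequality gives $d(ky,y)<\varepsilon$, as required.

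There is no substantive obstacle: the argument is the standard compactness-plus-continuity exercise that underlies the fact that continuous functions on compact metric spaces are uniformly continuous. The only minor point to be careful about is the passage from the two-variable formulation $(g,h)$ to the one-variable formulation via $k=gh^{-1}$, which is clean because $h$ acts as a bijection on $X$ and so the reduction loses nothing.
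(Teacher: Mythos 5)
Your proposal is correct and follows essentially the same route as the paper: reduce to the one-sided statement that $d(ky,y)<\varepsilon$ for all $k\in U$ and $y\in X$ (via $k=gh^{-1}$, $y=hx$), then use continuity of the action at points $(\id,y)$ together with compactness of $X$ to intersect finitely many neighborhoods of $\id$. The only cosmetic difference is that you arrange the local estimate via two $\varepsilon/2$ bounds and the triangle inequality, while the paper packages it directly as $d(gx',x')<\varepsilon$ on each $V_x$; this changes nothing of substance.
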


\begin{proof}
It is enough to show that there is some open neighborhood $U$ of
$\id$ such that if $g\in U$ then for all $x\in X$, $d\left(gx,x\right)<\varepsilon$
(since then if $gh^{-1}\in U$ then $d\left(gh^{-1}\left(hx\right),hx\right)<\varepsilon$).
For every $x\in X$, there is some neighborhood $V_{x}$ of $x$ in
$X$ and some neighborhood $U_{x}$ of $\id$ in $G$ such that for
all $g\in U_{x}$, $x'\in V_{x}$, $d\left(gx',x'\right)<\varepsilon$.
By compactness, a finite union of $V_{x}$'s covers $X$. Let $U$
be the intersection of the corresponding $U_{x}$'s. 
\end{proof}

\begin{proof}[Proof of Theorem \ref{thm:contains a subflow}]
Suppose that $\ind$ witnesses that $\sigma$ is shifty. Let $G_{0}$
be a countable dense subset of $G$, enumerated as $\sequence{g_{i}}{i<\omega}$,
such that $g_{0}=\id$. 

We construct an automorphism $\tau:M\to M$ by back and forth such
that eventually $\sigma_{*}=\tau^{-1}\sigma\tau$ and such that at
each finite stage, $\tau$ will be an elementary map. For the construction
it is actually better to think of the domain and range of $\tau$
as two different structures, so we have $M=M_{*}$ and suppose that
$\sigma:M\to M$, $\sigma_{*}:M_{*}\to M_{*}$ and $\tau:M_{*}\to M$.
The subscript $*$ will denote tuples from $M_{*}$ throughout. 

Suppose that we have constructed a partial elementary map $f:A_{*}\to A$
(that will be part of $\tau$ eventually) with $A_{*}\subseteq M_{*},A\subseteq M$
finite, enumerated by $a_{*},a$. Here is the main tool in the construction.
\begin{claim}
\label{claim:Using shiftiness}Suppose that $b_{*}'\ind a_{*}$ and
$b_{*}'\equiv b\subseteq a$. Let $b_{*}=f^{-1}\left(b\right)$. Then
there is $k<\omega$ and an extension $f'$ of $f$ such that any
automorphism $\tau'$ extending $f'$ will satisfy that for $\sigma_{*}'=\tau'^{-1}\sigma\tau'$,
$\sigma_{*}'^{k}\left(b'_{*}\right)=b_{*}$. 

Similarly, if $a_{*}\ind b_{*}'$ then there is some $k<\omega$ and
an extension $f'$ of $f$ such that any automorphism $\tau'$ extending
$f'$ will satisfy that for $\sigma_{*}'=\tau'^{-1}\sigma\tau'$,
$\sigma_{*}'^{k}\left(b_{*}\right)=b'_{*}$. 
\end{claim}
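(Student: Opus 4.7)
The plan is to reduce the claim to choosing a single $k<\omega$ and defining $f'(b_*'):=\sigma^{-k}(b)$. Indeed, for any $\tau'$ extending $f'$ we have $\tau'(b_*')=\sigma^{-k}(b)$, while $\tau'(b_*)=f(b_*)=b$ since $\tau'$ extends $f$. Hence automatically
\[
\sigma_*'^k(b_*')=\tau'^{-1}\sigma^k\tau'(b_*')=\tau'^{-1}(b)=b_*.
\]
So the task is reduced to finding $k$ for which the extension $f\cup\{b_*'\mapsto\sigma^{-k}(b)\}$ remains elementary, that is, $a_*b_*'\equiv a\,\sigma^{-k}(b)$.

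To produce such a $k$, I would first invoke the (ultra)homogeneity of $M$ to extend $f$ to some global automorphism $\hat{f}\in\Aut(M)$ and set $c':=\hat{f}(b_*')\in M$. Because $\ind$ is $\Aut(M)$-invariant and we are given $b_*'\ind a_*$ (in $M_*=M$), pushing forward by $\hat f$ yields $c'\ind a$. Moreover $c'\equiv b$, since $b_*'\equiv b$ by hypothesis and $\hat f$ is an automorphism. Now apply left shiftiness of $\sigma$ (established in the lemma following the definition of shiftiness) to the data $c'\ind a$ and $c'\equiv b$: it produces some $k<\omega$ with $c'\equiv_a \sigma^{-k}(b)$. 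Combining this with $a_*b_*'\equiv a\,c'$ (which holds because $\hat f$ is an automorphism taking $a_*b_*'$ to $a\,c'$) gives $a_*b_*'\equiv a\,\sigma^{-k}(b)$, exactly as needed.

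The second case, in which $a_*\ind b_*'$, is completely symmetric: pushing this through $\hat f$ gives $a\ind c'$, and the hypothesis of \emph{right} shiftiness (the form $A\ind b'$) now applies and yields $k$ with $c'\equiv_a \sigma^{k}(b)$. Setting $f'(b_*'):=\sigma^k(b)$ makes $f'$ elementary, and any automorphism $\tau'$ extending it satisfies $\tau'(b_*')=\sigma^k(b)=\sigma^k(\tau'(b_*))$, giving $\sigma_*'^k(b_*)=b_*'$.

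No individual step is particularly delicate; the only bookkeeping point that deserves care is matching the direction of the shiftiness axiom (left versus right, and hence the sign of the exponent $k$) to the side of $\ind$ on which $a_*$ appears. It is also worth emphasising that all applications of shiftiness take place inside $M_*=M$ with the fixed shifty automorphism $\sigma$, \emph{not} with the yet-to-be-constructed $\sigma_*'$; the passage between $\sigma$ and $\sigma_*'$ is handled purely by the conjugation identity $\sigma_*'^k=\tau'^{-1}\sigma^k\tau'$ at the very end.
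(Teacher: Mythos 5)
Your proof is correct and follows essentially the same route as the paper: extend $f$ by homogeneity to transport $b_*'$ to a tuple $c'$ with $c'\ind a$ and $c'\equiv b$, apply left (resp.\ right) shiftiness to get $k$ with $c'\equiv_a\sigma^{-k}(b)$ (resp.\ $\sigma^{k}(b)$), and define $f'(b_*')$ accordingly, the conjugation identity doing the rest. Nothing to add.
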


\begin{proof}
First, find some tuple $b'$ in $M$ such that $b'a\equiv b_{*}'a_{*}$.
In particular, $b'\ind a$. By left shiftiness, there is some $k<\omega$
such that $\sigma^{-k}\left(b\right)a\equiv b'a\equiv b_{*}'a_{*}$.
Extend $f$ to $f'$ which sends $b_{*}'$ to $\sigma^{-k}\left(b\right)$.
Then, for any $\tau'$ extending $f'$, $\tau'^{-1}\sigma{}^{k}\tau'\left(b_{*}'\right)=\tau'^{-1}\left(b\right)=b_{*}$. 

The second statement is proved similarly, using right shiftiness.
\end{proof}
We will make sure that for each $n<\omega$, the following condition
holds.
\begin{itemize}
\item [$\star$]There are $k_{n,0},\ldots,k_{n,n-1}<\omega$ such that for
all $i<n$, $d\left(\sigma_{*}^{k_{n,i}}\left(x_{*}\right),g_{i}\left(\sigma_{*}^{k_{n,0}}\left(x_{*}\right)\right)\right)<1/n$
and $k_{n,0}',\ldots,k_{n,n-1}'<\omega$ such that for all $i<n$,
$d\left(\sigma_{*}^{-k_{n,i}'}\left(x_{*}\right),g_{i}\left(\sigma_{*}^{-k_{n,0}'}\left(x_{*}\right)\right)\right)<1/n$. 
\end{itemize}
Why is $\star$ enough? Let $y_{n}=\sigma_{*}^{k_{n,0}}\left(x_{*}\right)$,
and let $y$ be a limit of some subsequence $\sequence{y_{n_{j}}}{j<\omega}$
(which exists by compactness), then $Gy\subseteq\cl\set{\sigma_{*}^{n}\left(x_{*}\right)}{n<\omega}$
(so $\cl\left(Gy\right)$ is a subflow): given $g\in G$ and $0<\varepsilon$,
first find an open neighborhood $U\subseteq G$ of $g$ such that
if $h\in U$ then $d\left(gx,hx\right)<\varepsilon/4$ for all $x\in X$
(this $U$ is given to us by Lemma \ref{lem:uinform bound}: it is
$\left(g^{-1}V\right)^{-1}$ where $V$ is an open neighborhood of
$\id$ such that if $gh^{-1}\in V$, $d\left(gx,hx\right)<\varepsilon/4$).
Take $n$ so large that $g_{i}\in U$ for some $i<n$ and $1/n<\varepsilon/4$,
and find $n_{j}$ even larger so that $d\left(g_{i}y,g_{i}y_{n_{j}}\right)<\varepsilon/4$.
Then $d\left(gy,g_{i}y\right)<\varepsilon/4$, $d\left(g_{i}y,g_{i}y_{n_{j}}\right)<\varepsilon/4$
and $d\left(g_{i}y_{n_{j}},\sigma_{*}^{k_{n_{j},i}}\left(x_{*}\right)\right)<\varepsilon/4$.
Together, $d\left(gy,\sigma_{*}^{k_{n_{j},i}}\left(x_{0}\right)\right)<3\varepsilon/4<\varepsilon$,
which means that $gy$ is in the closure. Similarly, if $y'$ is a
limit of a subsequence of $\sigma_{*}^{-k_{n,0}'}\left(x_{*}\right)$,
then $Gy'\subseteq\cl\set{\sigma_{*}^{-n}\left(x_{*}\right)}{n<\omega}$. 

So we consider $f:A_{*}\to A$ a partial elementary map. Our task
now is to deal with $n<\omega$ . Let $\varepsilon=1/n$. 

Let $A_{*}\subseteq C_{*}\subseteq M_{*}$ be finite such that if
$g^{-1}\restriction C_{*}=h^{-1}\restriction C_{*}$ then $d\left(gx,hx\right)<\varepsilon/4$
for all $x\in X$ and any $g,h\in G$ (this is by Lemma \ref{lem:uinform bound}).
Let $z_{0},\ldots,z_{l-1}$ be such that $\bigcup\set{B\left(z_{j},\varepsilon/4\right)}{j<l}$
cover $X$, and write $B_{j}=B\left(z_{j},\varepsilon/4\right)$. 

Let $c_{*}$ be a finite tuple enumerating $C_{*}$. For every $c_{*}'\equiv c_{*}$,
we say that $c_{*}'$ \emph{has} \emph{color $j<l$} if $j$ is least
such that there is $g\in G$ such that $g\left(c_{*}'\right)=c_{*}$
and $gx_{*}\in B_{j}$. Note that by the choice of $c_{*}$, if $g'\left(c_{*}'\right)=c_{*}$
then $g'g^{-1}\restriction C_{*}=\id$, so $g'x_{0}\in B\left(z_{j},\varepsilon/2\right)$. 

Let $D_{*}=\bigcup\set{g_{i}^{-1}\left(C_{*}\right)}{i<n}$. Note
that $C_{*}\subseteq D_{*}$ because $g_{0}=\id$. Let $d_{*}$ enumerate
$D_{*}$. For any $d_{*}'\equiv d_{*}$ and $s\subseteq l$, we say
that $d_{*}'$ \emph{has} \emph{color} $s$ if $\set{j<l}{c_{*}'\equiv c_{*},c_{*}'\subseteq d_{*}',c_{*}'\text{ has color }j}=s$. 

By left existence, there is some $s_{0}\subseteq l$ such that for
\emph{every} finite set $S\subseteq M_{*}$, there is some $d_{*}'\equiv d_{*}$
with $d_{*}'\ind S$ and $d_{*}'$ has color $s_{0}$.

Let $d_{*}'\equiv d_{*}$ be of color $s_{0}$ such that $d_{*}'\ind d_{*}$.
For $i<n$, let $c_{*,i}'\subseteq d_{*}'$ be the tuple corresponding
to $g_{i}^{-1}\left(c_{*}\right)$, so in particular $c_{*,i}'\equiv c_{*}$.
Let $j_{i}<l$ be the color of $c_{*,i}'$. By the choice of $s_{0}$,
for every finite set $S\subseteq M_{*}$, there is some $c_{*}'\equiv c_{*}$
such that $c'_{*}\ind S$ and $c_{*}'$ has color $j_{i}$. 

Since $M$ is homogeneous we can extend $f$ in such a way so that
its domain equals $D_{*}$. By Claim \ref{claim:Using shiftiness}
(the first part), there is some $k_{n,0}<\omega$ and an extension
$f_{0}$ of $f$ that ensures that $\sigma_{*}^{k_{n,0}}\left(d_{*}'\right)=d_{*}$. 

Starting with $f_{0}$, we construct an increasing sequence $\sequence{f_{i}}{i<n}$
as follows. Suppose we have $f_{i}$ whose domain is $D_{*,i}$. Find
some $c''_{*,i+1}\equiv c_{*}$ of color $j_{i+1}$ such that $c_{*,i+1}''\ind D_{*,i}$.
By Claim \ref{claim:Using shiftiness}, we can find $k_{n,i+1}<\omega$
and extend $f_{i}$ to $f_{i+1}$ which ensures that $\sigma_{*}^{k_{n,i+1}}\left(c_{*,i+1}''\right)=c_{*}$. 

Now we have the first part of $\star$: we need to check that $d\left(\sigma_{*}^{k_{n,i}}\left(x_{*}\right),g_{i}\left(\sigma_{*}^{k_{n,0}}\left(x_{*}\right)\right)\right)<\varepsilon$
for all $i<n$. For $i=0$ this is clear since $g_{0}=\id$, so we
may assume that $i>0$. As $\sigma_{*}^{k_{n,i}}\left(c_{*,i}''\right)=c_{*}$,
it follows that $d\left(\sigma_{*}^{k_{n,i}}\left(x_{0}\right),z_{j_{i}}\right)<\varepsilon/2$.
Similarly, as $\sigma_{*}^{k_{n,0}}\left(c_{*,i}'\right)=g_{i}^{-1}\left(c_{*}\right)$,
we have that $d\left(g_{i}\left(\sigma_{*}^{k_{n,0}}\left(x_{0}\right)\right),z_{j_{i}}\right)<\varepsilon/2$.
Together, we are done. 

Now we have to take care of the other half of $\star$. This is done
similarly, using right existence and the second part of Claim \ref{claim:Using shiftiness}. 
\end{proof}
The following proposition explains why we needed to take a conjugate
of $\sigma$. The countable ordered random graph has a CIR by Example
\ref{exa:The-random-ordered}, thus Theorem \ref{thm:contains a subflow}
applies to it. In Section \ref{subsec:Ramsey, top dynamics}, we mentioned
that it is a Ramsey structure. Note that the underlying order is dense
(by Proposition \ref{prop:restriction of a mix of two Faisse}). 
\begin{prop}
\label{prop:need to take conjugate in the ordered random graph}Let
$M=\left(V,<,R\right)$ be the countable ordered random graph. Then
there is no automorphism $\sigma\in G=\Aut\left(M\right)$ which satisfies
({*}) for every continuous action on a compact metric space $X$ on
which $G$ acts and every $x_{*}\in X$. 
\end{prop}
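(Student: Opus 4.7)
The plan is to exhibit, for each $\sigma \in G$, a compact metric $G$-flow $X$ together with a point $x_\ast$ where condition $(\ast)$ fails in the strongest possible way: the forward and backward $\sigma$-orbits of $x_\ast$ both collapse to the single non-fixed point $x_\ast$.

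The flow I would use is the same for every $\sigma$: let $X = \{0,1\}^{V}$ with the product topology (compact metric since $V$ is countable), equipped with the shift action $(g\cdot f)(a) = f(g^{-1}(a))$, which is clearly continuous. Two preliminary observations will drive the argument. First, since $G$ acts transitively on $V$ (by quantifier elimination, there is a single $1$-type over $\emptyset$), a function $f\in X$ is $G$-fixed iff it is constant on each $G$-orbit in $V$, i.e.\ iff $f$ is constant; so there are exactly two $G$-fixed points, namely the constants $0$ and $1$. Second, since the age of $M$ — the class of finite linearly ordered graphs — is a Ramsey class by the Ne\v{s}et\v{r}il--R\"odl theorem, Fact~\ref{fact:ramsey iff extremeley amenable} gives that $G$ is extremely amenable, so every non-empty subflow of $X$ must contain a $G$-fixed point, hence must contain $0$ or $1$.

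Next, I would argue that no $\sigma \in G$ can act with a single orbit on $V$. Indeed, $\sigma$ preserves the dense linear order $<$, so for any $m \in V$ the orbit $\{\sigma^{n}(m):n\in\mathbb{Z}\}$ is either a singleton (if $\sigma(m)=m$) or a $<$-chain order-isomorphic to $\mathbb{Z}$ (if $\sigma(m)>m$ or $\sigma(m)<m$, by monotone iteration). Neither of these can be all of $V$, since $(V,<)$ is a dense linear order without endpoints. Hence $\sigma$ admits at least two orbits on $V$, and I can choose one $\sigma$-orbit $S\subseteq V$; note $\emptyset \ne S \ne V$. Set $x_\ast = 1_{S}\in X$.

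By construction $\sigma(S)=S$, so $\sigma\cdot x_\ast = 1_{\sigma(S)} = x_\ast$, and therefore $\sigma^{n}\cdot x_\ast = x_\ast$ for every $n\in\mathbb{Z}$; in particular both $\cl\{\sigma^{n}(x_\ast):n<\omega\}$ and $\cl\{\sigma^{-n}(x_\ast):n<\omega\}$ equal the singleton $\{x_\ast\}$. But $x_\ast$ is not a constant function, so it is not a $G$-fixed point, and by the extreme amenability observation above, $\{x_\ast\}$ contains no non-empty subflow. Hence $(\ast)$ fails for $\sigma$, the flow $X$, and the point $x_\ast$. The only ingredient specific to the ordered random graph is the density of $<$, used to rule out a single orbit; everything else is soft, so there is no real obstacle.
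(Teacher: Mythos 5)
Your proof is correct, and it takes a genuinely different route from the paper. The paper works inside the type space $S_x(M)$: it picks $a$, uses density of $<$ to find $b$ outside the (discrete or singleton) $\sigma$-orbit of $a$, takes a type $p$ containing $R(x,\sigma^n(a))$ and $\neg R(x,\sigma^m(b))$ for all $n,m\in\mathbb{Z}$, and observes that every type in $\cl\{\sigma^n(p)\}$ contains $R(x,a)\wedge\neg R(x,b)$, so an invariant type produced by extreme amenability inside a subflow contradicts transitivity. You instead use the Bernoulli shift $\{0,1\}^V$ and the indicator function of a single $\sigma$-orbit $S$, so that the forward and backward orbit closures are literally the singleton $\{x_*\}$, which cannot contain a nonempty subflow because $x_*$ is not $G$-fixed ($G$ is transitive and $\emptyset\neq S\neq V$). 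Both arguments hinge on the same combinatorial fact — density of the order forces any $\sigma$-orbit to be a singleton or a $\mathbb{Z}$-chain, hence proper — but yours is more elementary: in fact your appeal to extreme amenability (and hence to the Ramsey property) is superfluous, since the only nonempty subset of $\{x_*\}$ is $\{x_*\}$ itself and transitivity already shows it is not invariant; as a result your argument never uses the graph relation $R$ and would apply verbatim to any transitive ultrahomogeneous structure carrying a dense definable order (e.g.\ DLO itself), which is consistent with Theorem \ref{thm:contains a subflow} since that theorem only provides a conjugate per flow. What the paper's version buys in exchange is that it stays within the type-space flows used throughout Section \ref{sec:A-topological-dynamics} and identifies the obstruction concretely as the nonexistence of an invariant $1$-type compatible with the chosen orbit data.
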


\begin{proof}
First we find $a\neq b$ in $M$ such that $\sigma^{n}\left(a\right)\neq\sigma^{m}\left(b\right)$
for all $m,n\in\Zz$. To do that, take any $a\in M$. Then $\set{\sigma^{n}\left(a\right)}{n\in\Zz}$
is discrete (in the order sense: it is either a $\Zz$-chain or just
$a$). Since $\left(V,<\right)$ is dense, there is some $b\neq\sigma^{n}\left(a\right)$
for all $n\in\Zz$. It follows that $b$ is as required.  Let $X=S_{x}\left(M\right)$
be the space of complete types over $M$ (in one variable $x$) (it
is a compact metric space). Let $p\in X$ be any completion of the
partial type $\set{R\left(x,\sigma^{n}\left(a\right)\right)}{n\in\Zz}\cup\set{\neg R\left(x,\sigma^{m}\left(b\right)\right)}{m\in\Zz}$.
Then if ({*}) holds for $p$, then by Fact \ref{fact:ramsey iff extremeley amenable},
there is some point $p_{0}\in\cl\set{\sigma^{n}\left(p\right)}{n<\omega}$
which is a fixed point of $G$. In other words, $p_{0}$ is an invariant
type over $M$. However  $R\left(x,a\right)\land\neg R\left(x,b\right)\in p_{0}$
(this is true for any type in the closure), so $p_{0}$ cannot be
invariant (because $G$ is transitive). 
\end{proof}
The example of the ordered random graph also explains why we needed
to restrict to compact metric spaces, and could not prove this for
all compact spaces. If Theorem \ref{thm:contains a subflow} had worked
for all compact spaces, it would also work for the universal $G$-ambit
(see Section \ref{subsec:Ramsey, top dynamics}), $\left(X,x_{0}\right)$.
Thus, there would be a conjugate $\sigma_{*}$ of $\sigma$ such that
$\cl\set{\sigma_{*}^{n}\left(x_{0}\right)}{n<\omega}$ contains a
subflow. But then if $\left(Y,y_{0}\right)$ is any other $G$-ambit,
by universality, there is a continuous surjection $\pi:X\to Y$ mapping
$x_{0}$ to $y_{0}$ and commuting with the action of $G$. Thus,
$\pi$ maps $\cl\set{\sigma_{*}^{n}\left(x_{0}\right)}{n<\omega}$
to $\cl\set{\sigma_{*}^{n}\left(y\right)}{n<\omega}$, and the latter
contains a $G$-subflow.  Thus we get that $\sigma_{*}$ satisfies
({*}) for every $G$-ambit, which contradicts Proposition \ref{prop:need to take conjugate in the ordered random graph}. 
\begin{cor}
\label{cor:Dense trees don't have a groovy ind relation}Let $T=\DenseTrees$
be the theory of dense trees in the language $\left\{ <,\wedge\right\} $,
and let $M\models T$ be countable. Then $\Aut\left(M\right)$ has
no shifty automorphism. In particular, $M$ has no CIR.

Furthermore, the same is true for $T_{dt,<_{lex}}$, the theory of
the lexicographically ordered dense tree $N$, see Example \ref{exa:lex trees are ramsey}.
\end{cor}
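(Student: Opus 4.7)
Assume for contradiction that $\sigma \in \Aut(M)$ is shifty. The strategy is to produce a compact metric $G$-flow $X$ (with $G = \Aut(M)$) and a point $x_* \in X$ for which the conclusion of Theorem~\ref{thm:contains a subflow} is incompatible with the structural constraint on $\Aut(M)$ furnished by Proposition~\ref{prop:in trees, every automorphism fixes a branch or a point}. Note first that whether an automorphism has a fixed point is preserved under conjugation, so the dichotomy ``$\sigma$ fixes a point of $M$'' versus ``$\sigma$ fixes a branch of $M$ setwise and has no fixed points'' transfers to every conjugate $\sigma_*$ of $\sigma$.

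Since $M$ is countable in a countable language, the Stone spaces $S_x(M)$ and $S_{x,y}(M)$ are second-countable, compact, Hausdorff, hence compact metric $G$-flows. Mimicking the argument of Proposition~\ref{prop:need to take conjugate in the ordered random graph}, I would pick $x_* = p$ to be a type coding a configuration of generic new elements above $M$, chosen so that the $\sigma_*$-orbit closure of $p$ cannot absorb a $G$-subflow once we know, via Proposition~\ref{prop:in trees, every automorphism fixes a branch or a point}, that $\sigma_*$ either fixes a point of $M$ or is a non-trivial order-shift along a single branch. Theorem~\ref{thm:contains a subflow} guarantees such a subflow exists for some conjugate $\sigma_*$, contradicting the choice of $p$.

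The principal difficulty is that, unlike the ordered random graph, the dense tree admits non-trivial $G$-invariant $1$-types (for example the type asserting $x \perp m$ for every $m \in M$), so one cannot obstruct an invariant type in $\cl\{\sigma_*^n(p) : n<\omega\}$ by a direct appeal to $1$-transitivity of $G$ on $M$. I would address this by passing to the $2$-type space $S_{x,y}(M)$ and choosing $p(x,y)$ to place $x$ and $y$ in two \emph{independent} new generic directions (for instance with $x \wedge y$ lying strictly below every element of $M$, or else with $x, y$ both above a chosen element $m_0 \in M$ but with $x \wedge y = m_0$): an automorphism fixing only a single branch or a single point of $M$ can shift at most one such direction coherently, so no such $\sigma_*$ can accumulate on a $G$-subflow of $S_{x,y}(M)$, which, being $G$-invariant, would have to exhibit symmetric control over both variables that $\sigma_*$ cannot supply.

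Finally, the ``furthermore'' clause for $T_{dt,<_{\mathrm{lex}}}$ follows from the same template: every element of $\Aut(N)$ restricts to an automorphism of the tree reduct $(N,<,\wedge)$, which is itself a dense tree by Proposition~\ref{prop:restriction of a mix of two Faisse}, so Proposition~\ref{prop:in trees, every automorphism fixes a branch or a point} still applies to every conjugate of a hypothetical shifty automorphism of $N$. The argument sketched above then carries over in the richer language; alternatively, since $N$ is a Ramsey structure (Example~\ref{exa:lex trees are ramsey}), $\Aut(N)$ is extremely amenable by Fact~\ref{fact:ramsey iff extremeley amenable}, so any $G$-flow admits a fixed point, forcing a $G$-invariant type inside the subflow provided by Theorem~\ref{thm:contains a subflow} and making the conflict with Proposition~\ref{prop:in trees, every automorphism fixes a branch or a point} more immediate.
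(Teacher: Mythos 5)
You correctly identify the paper's overall skeleton, namely apply Theorem \ref{thm:contains a subflow} to a compact metric type-space flow and play the resulting subflow against Proposition \ref{prop:in trees, every automorphism fixes a branch or a point}, and you are right that the argument of Proposition \ref{prop:need to take conjugate in the ordered random graph} cannot be copied because invariant types over the dense tree exist. But the contradiction itself is never carried out, and the specific base points you suggest do not deliver it. Your first configuration ($x,y$ hanging off a meet $x\wedge y$ lying below every element of $M$, in fresh distinct directions) is, by quantifier elimination, a complete $2$-type which is itself $\Aut\left(M\right)$-invariant; hence its orbit closure under any $\sigma_{*}$ is the singleton $\{p\}$, which \emph{is} a $G$-subflow, so the conclusion of Theorem \ref{thm:contains a subflow} is satisfied trivially and no contradiction can arise. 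Your second configuration (based at a chosen $m_{0}$ with $x\wedge y=m_{0}$) is not analysed: the conjugate $\sigma_{*}$ produced by the theorem has its fixed point or invariant branch in an unknown position relative to $m_{0}$, and if for instance $\sigma_{*}$ fixes a branch through $m_{0}$ and the relevant powers push $m_{0}$ coinitially downwards, then the invariant ``fully generic'' type is a limit of the orbit, so one of the two closures contains a subflow for free; you would then have to exclude subflows in the \emph{other} closure, and since $\Aut\left(M\right)$ is not extremely amenable a subflow need not contain an invariant point, so ``symmetric control over both variables'' is not an argument — one must exploit $G$-invariance of the subflow through concrete formulas and automorphisms, and that is exactly the missing step.

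The paper sidesteps all of this by a different choice of flow: $X=S_{\bar{m}}\left(M\right)$, the space of types of enumerations of $M$, with base point $x_{*}=\tp\left(\bar{m}/M\right)$, so that $\tau^{n}\left(x_{*}\right)$ encodes the graph of a power of $\tau$ and the dichotomy of Proposition \ref{prop:in trees, every automorphism fixes a branch or a point} can be read off coordinatewise. If $\tau$ fixes $m$, every $p$ in the subflow satisfies $x_{m}=m$, contradicting transitivity; if $\tau$ fixes a branch $B$ with no fixed point and $\tau\left(m\right)>m$ for $m\in B$, every $p$ in the subflow satisfies $x_{m}>m$ and $x_{m}\wedge m'=m$ for $m'>m$ off $B$, and applying an automorphism fixing $m$ and moving $m'$ into $B$ (legitimate because the subflow is $G$-invariant) yields an incomparability contradicting comparability with elements of $B$; the case $\tau\left(m\right)<m$ is handled by the backward closure $Y^{-}$, which is why the two-sided conclusion of Theorem \ref{thm:contains a subflow} is needed — something your sketch never uses. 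For the ``furthermore'' clause your first route is the paper's, except that one must still verify in the lex-ordered tree the homogeneity facts the argument consumes (transitivity, and that over $m$ an off-branch element $m'>m$ can be moved onto the branch); the appeal to extreme amenability is a reasonable extra idea there but cannot substitute for the main argument, which must work for the unordered tree where it is unavailable.
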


\begin{proof}
Suppose that $\sigma$ was shifty. Let $\bar{m}=\sequence m{m\in M}$
be an enumeration of $M$ (really the identity function), and let
$\bar{x}=\sequence{x_{m}}{m\in M}$. Let $X=S_{\bar{m}}\left(M\right)$
be the space of $\bar{x}$-complete types $p$ over $M$ such that
$p\restriction\emptyset=\tp\left(\bar{m}/M\right)$. Then $X$ is
a compact metric space. Let $x_{*}=\tp\left(\bar{m}/M\right)$. By
Theorem \ref{thm:contains a subflow}, there is some conjugate $\tau$
of $\sigma$ such that $\cl\set{\tau^{n}\left(x_{*}\right)}{n<\omega}$
contains a subflow $Y^{+}\subseteq X$ and similarly, $\cl\set{\tau^{-n}\left(x_{*}\right)}{n<\omega}$
contains a subflow $Y^{-}$. By Proposition \ref{prop:in trees, every automorphism fixes a branch or a point},
$\tau$ fixes a branch or a point. 

Suppose that $\tau\left(m\right)=m$ for some $m\in M$. Then for
every $p\in Y^{+}$, $p\models x_{m}=m$. However $G=\Aut\left(M\right)$
acts transitively on $M$, so we have a contradiction. 

Now suppose that $\tau$ fixes a branch $B\subseteq M$, but does
not fix any point. Suppose that $\tau\left(m\right)>m$ for some $m\in B$.
Then $\tau^{n}\left(m\right)>m$ for all $n<\omega$, so for any $p\in Y^{+}$,
$p\models x_{m}>m$. There is some $m'\in M$ such that $m'>m$ and
$m'\notin B$. Since $m<\tau^{n}\left(m\right)\in B$ for all $n<\omega$,
it follows that $p\models x_{m}\wedge m'=m$ for all $p\in Y^{+}$.
Let $\tau'\in G$ fix $m$ and map $m'$ to $B$. Then $\tau'\left(p\right)\models\left(x_{m}\wedge\tau'\left(m'\right)\right)=m<x_{m}$.
But $\tau'\left(p\right)\in Y^{+}$, so $\tau'\left(p\right)\models x_{m}\leq\tau'\left(m'\right)\vee\tau'\left(m'\right)\leq x_{m}$,
which is a contradiction. If, on the other hand $\tau\left(m\right)<m$,
then $\tau^{-1}\left(m\right)>m$, so we can apply the same argument
to $Y^{-}$. 

For the furthermore part, note that by Proposition \ref{prop:restriction of a mix of two Faisse},
the reduct of $T_{dt,<_{lex}}$ to the tree language is $\DenseTrees$.
In addition, letting $H=\Aut\left(N\right)$, $H$ acts transitively
on $N$ (by quantifier elimination, as $N$ is ultrahomogeneous).
In addition, if $B\subseteq N$ is a branch, $m\in B$, there is always
some $m'>m$, $m'\notin B$ and for any $n'>m$ in $B$, $m'm\equiv n'm$.
Hence, we can apply Proposition \ref{prop:in trees, every automorphism fixes a branch or a point}
and the same proof will work. 
\end{proof}

\section{\label{sec:Further-questions}Further questions}

The results presented in the previous sections lead to a number of
questions, both related to CIR and more generally on $\omega$-categorical
structures. We state here a few general conjectures and questions.
If they turn out to be false at this level of generality, they could
be weakened by restricting to finitely homogeneous structures or other
subclasses.

The following conjecture, along with Theorem \ref{thm:If no CQ then finitely generated}
(and Example \ref{exa:If G0=00003DG, then no compact quotients}),
would imply that indeed compact quotients are the only obstruction
to having finite topological rank.
\begin{conjecture}
\label{conj:expansion CIR}Any $\omega$-categorical structure has
an $\omega$-categorical expansion which admits a CIR.
\end{conjecture}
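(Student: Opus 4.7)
The plan is to attack the conjecture in two stages, guided by the tree and circular order examples analyzed in the paper. The first stage is to reduce to the Ramsey case by finding an $\omega$-categorical Ramsey expansion $M_1$ of $M$. For a general $\omega$-categorical $M$ the existence of such an expansion is itself conjectural (the Bodirsky--Pinsker / Melleray--Nguyen Van Th\'e--Tsankov program), but for the finitely homogeneous case the Ne\v{s}et\v{r}il--R\"odl theorem and its modern generalizations give strong existence results, so one would first carry out the proof under that hypothesis. Once $M_1$ is Ramsey, Proposition \ref{prop:Ramsey has everything but over base} together with Lemma \ref{lem:CIR implies just over 0} provides, inside $M_1$, two isomorphic copies that are mutually non-splitting over $\emptyset$. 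This is exactly condition (3) of Lemma \ref{lem:CIR implies just over 0}: stationarity, extension, monotonicity and existence over $\emptyset$, with the essential missing ingredients being a version over arbitrary base sets and transitivity.

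The second stage is to enrich $M_1$ further so that the weak independence over $\emptyset$ propagates to a genuine CIR. The examples of the dense tree and the dense circular order show the phenomenon we want to axiomatize: naming a single generic point (or a branch, as in Example \ref{exa:trees with predicate}) rigidifies the structure enough that a natural candidate independence relation becomes transitive. The plan is to identify, inside a Ramsey expansion $M_1$, a uniformly definable ``directional'' obstruction to transitivity and to kill it by an expansion of the following shape: name a generic section or predicate $P$ together with auxiliary Skolem-like functions (mimicking Example \ref{exa:trees with predicate} and Remark \ref{rem:expanding a circular order by a point has a CIR}), and then define $A \ind_{C} B$ by combining the $\ind^{ns}$-style Ramsey independence with the $P$-oriented comparison. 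A reasonable intermediate target is the following weaker variant, obtained by iterating: if $M$ is $\omega$-categorical Ramsey, then some expansion of $M$ by finitely many constants from $\acl^{\eq}(\emptyset)$ together with a generic predicate admits a CIR on finitely generated substructures.

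The main obstacle will be the second stage. One has to show that for an \emph{arbitrary} $\omega$-categorical structure the transitivity failure is of a sufficiently tame form to be cured by a single $\omega$-categorical expansion; in particular the expansion must remain uniformly locally finite (or satisfy the corresponding finiteness condition from Fact \ref{fact:Fraisse limits}) so as to stay $\omega$-categorical. The topological-dynamics obstruction identified in Theorem \ref{thm:contains a subflow} and Corollary \ref{cor:Dense trees don't have a groovy ind relation} is a serious warning sign: it shows that non-trivial dynamical invariants (such as an automorphism fixing only a branch, with no fixed point) can rule out a CIR even in a Ramsey structure. The plan is therefore to use Theorem \ref{thm:contains a subflow} in reverse, as a diagnostic: isolate the minimal flows that obstruct a CIR on $M_1$, pass to an expansion whose automorphism group no longer acts on those flows (which amounts to naming their $\acl^{\eq}(\emptyset)$-codes and a generic cross-section), and verify directly in the expansion the back-and-forth construction of Theorem \ref{thm:existence of repulsive automorphism-ultrahomogeneous}.

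A complementary and more concrete line of attack is to try to prove the conjecture first for structures in which $\Aut(M)$ has no compact quotients, via Theorem \ref{thm:If no CQ then finitely generated}: a positive answer there, together with the reduction to $G^{0} = \Aut(M')$ given by Proposition \ref{prop:G0 acts oligomorphically and has no CQ}, would reduce the general conjecture to the no-compact-quotient case and thus to a setting in which the Ramsey expansion and the constant-naming tricks interact most cleanly with the dynamical arguments of Section \ref{sec:A-topological-dynamics}.
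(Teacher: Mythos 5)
The statement you are trying to prove is not proved in the paper at all: it is Conjecture \ref{conj:expansion CIR}, stated in Section \ref{sec:Further-questions} precisely because the authors leave it open. So there is no ``paper proof'' to match, and what you have written is a research programme rather than a proof; as a proof it has genuine gaps at both stages. Stage one already rests on an unproved hypothesis: the existence of an $\omega$-categorical Ramsey expansion of an arbitrary $\omega$-categorical structure is itself a well-known open problem (you acknowledge this), so even granting everything else you would at best obtain a conditional statement, or a statement restricted to finitely homogeneous structures where such expansions are known. Stage two is where the real mathematical content would have to be, and it is missing. The paper itself shows that being Ramsey is far from sufficient: the lexicographically ordered dense tree is Ramsey (Example \ref{exa:lex trees are ramsey}) yet has no CIR (Corollary \ref{cor:Dense trees don't have a groovy ind relation}), and Remark \ref{rem:Ramsey need not have CIR} makes exactly the point that naming constants preserves the weak relation of Lemma \ref{lem:CIR implies just over 0}(3) but gives no reason for transitivity. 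Your proposed cure --- ``identify a uniformly definable directional obstruction to transitivity and kill it by naming a generic section or predicate'' --- is not an argument: there is no definition of what this obstruction is for a general structure, no proof that it is unique or definable, no proof that the proposed expansion (a predicate for a branch-like object plus Skolem-like functions) exists as an $\omega$-categorical expansion of the \emph{given} structure rather than of some new Fra\"iss\'e class (note that Example \ref{exa:trees with predicate} builds a new model completion; identifying its reduct with the original dense tree is exactly the kind of step that needs Proposition \ref{prop:restriction of a mix of two Faisse}-type work and has no general analogue here), and no proof that the candidate relation combining the Ramsey-style $\ind^{ns}$ with a $P$-oriented comparison is transitive. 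Using Theorem \ref{thm:contains a subflow} ``in reverse'' as a diagnostic is likewise only a heuristic: it is a necessary condition for a shifty automorphism, and nothing in the paper suggests that the minimal flows obstructing a CIR can be enumerated, coded, and uniformly removed by one further $\omega$-categorical expansion.

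Your final paragraph also has the logical direction of Theorem \ref{thm:If no CQ then finitely generated} backwards. That theorem takes as input an expansion $N$ whose automorphism group $H$ is oligomorphic with $(G,H)$ having no compact quotients, and concludes that $G$ is topologically generated by $H$ together with two extra elements; combined with the conjecture it yields finite topological rank, but it gives no leverage whatsoever towards \emph{producing} an expansion with a CIR. The only sound reduction in that paragraph is the harmless one via Proposition \ref{prop:G0 acts oligomorphically and has no CQ} (an expansion of an expansion is an expansion, so one may assume $G=G^{0}$), which does not touch the core difficulty. In short: the conjecture remains open, and your plan, while it correctly assembles the paper's partial results (Proposition \ref{prop:Ramsey has everything but over base}, Lemma \ref{lem:CIR implies just over 0}, the tree and circular-order examples), does not contain a mechanism that would close the gap between the base-$\emptyset$ relation supplied by extreme amenability and a full CIR with transitivity and extension over arbitrary finite bases.
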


Suppose that $M$ is a structure and $\C$ a monster model for $\Th\left(M\right)$.
The group of\emph{ Lascar strong automorphisms of $M$}, denoted by
$\Aut f\left(M\right)$ is the group of automorphisms of $M$ generated
by the set $\set{\sigma\restriction M}{\exists N\prec\C,\left|N\right|=\left|T\right|,\sigma\restriction N=\id}$.
If $\sigma$ is Lascar strong, then $\sigma\restriction\acl^{\eq}\left(\emptyset\right)=\id$
so $\Aut f\left(M\right)$ is contained in $G^{0}$. However, there
are examples (even $\omega$-categorical examples) where $G^{0}$
is strictly bigger than $\Aut f\left(M\right)$, see \cite{ivanov2010countably,pelaez2008lascar}.
The\emph{ Lascar group }of $M$\emph{ }is the quotient $\Aut\left(M\right)/\Aut f\left(M\right)$.
For more on the Lascar group, see \cite{MR2018391}. In the $\omega$-categorical
case, the quotient $\Aut\left(\acl^{\eq}\left(\emptyset\right)\right)$
is also called the \emph{compact Lascar group}. 

If $M$ is an ultrahomogeneous linearly ordered Ramsey structure,
then by  Proposition \ref{prop:Ramsey has everything but over base},
there is some model $N$ such that $N\ind^{ns}M$.  In particular,
$\sigma\left(M\right)\equiv_{N}M$, for every $\sigma\in\Aut\left(M\right)$
which implies that $\sigma$ is Lascar strong. Thus in Ramsey structures,
and in fact for any model $M$ for which there is some such $N$,
the Lascar group is trivial, and there are no compact quotients. For
instance, by Lemma \ref{lem:CIR implies just over 0} this happens
also when $M$ is $\omega$-categorical with a CIR. 

As we said above, we conjecture that if $\Aut\left(M\right)$ has
no compact quotients then it has finite topological rank. However,
as we pointed out, it could be that $G^{0}=G$ but the Lascar group
is nontrivial. Thus, potentially, the Lascar group \textemdash{} as
a quotient of $\Aut\left(M\right)$ \textemdash{} can be an obstruction
to having finite topological rank. During a talk given on this paper
by the second author, Anand Pillay asked if this scenario could happen.
Conjecture \ref{conj:expansion Lascar} (together with Theorem \ref{thm:If no CQ then finitely generated})
implies that it could not. 
\begin{conjecture}
\label{conj:expansion Lascar}Any $\omega$-categorical structure
has an $\omega$-categorical expansion with trivial Lascar group.
\end{conjecture}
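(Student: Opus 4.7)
The plan is to reduce Conjecture \ref{conj:expansion Lascar} to Conjecture \ref{conj:expansion CIR}. Suppose $M'$ is an $\omega$-categorical expansion of $M$ admitting a CIR. By Lemma \ref{lem:CIR implies just over 0}, there are countable $M_0', M_1' \subseteq M'$ with $M_0', M_1' \cong M'$ and $M_0' \ind^{ns} M_1'$. Given $\sigma \in \Aut(M')$ and a finite tuple $a \in M'$, ultrahomogeneity lets us translate $(M_0', M_1')$ by an automorphism so that $\{a, \sigma(a)\} \subseteq M_1'$; non-splitting of $\tp(M_0'/M_1')$ then yields $a \equiv_{M_0'} \sigma(a)$, hence $a \equiv^L \sigma(a)$. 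Since this holds for all $a$, $\sigma$ is Lascar strong, so the Lascar group of $M'$ is trivial. Thus a positive answer to Conjecture \ref{conj:expansion CIR} implies a positive answer here.

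The main line of attack on the CIR conjecture is through Ramsey theory. Proposition \ref{prop:Ramsey has everything but over base} shows that every $\omega$-categorical Ramsey structure already satisfies condition (2) of Lemma \ref{lem:CIR implies just over 0}, and Fact \ref{fact:Expanding Ramsey by naming elements} ensures that the Ramsey property is preserved by naming constants. The program is: (a) find an $\omega$-categorical Ramsey expansion of $M$ (itself a longstanding open problem, with substantial positive evidence due to Bodirsky--Pinsker, Hubi\v{c}ka--Ne\v{s}et\v{r}il, and others); (b) further expand by constants enumerating $\acl^{\eq}(\emptyset)$ so that no compact quotient survives; (c) upgrade the weak non-splitting witness delivered by (a)+(b) to a full ternary CIR by canonically selecting, among the Ramsey-supplied amalgamation choices, one that is governed by the linear order and is transitive on both sides.

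The main obstacle I anticipate is step (c), which is precisely the delicate point flagged in Remark \ref{rem:Ramsey need not have CIR} and Corollary \ref{cor:Dense trees don't have a groovy ind relation}: transitivity of $\ind$ over non-empty bases does not come for free from Ramseyness, as the lexicographically ordered dense tree demonstrates. A deeper structural difficulty, common to both conjectures, is handling cases where the connected component of the Lascar group $\mathrm{Gal}_0(T)$ is a non-trivial connected compact group, as in the Ziegler--Pel\'aez--Ivanov examples: within a single complete type over $\emptyset$ there may then be infinitely many Lascar strong types, and no finite collection of new unary predicates can collapse them while preserving $\omega$-categoricity. Any successful expansion must code this collapse through higher-arity relations that encode Morley-sequence data without inflating the orbit counts on finite tuples.
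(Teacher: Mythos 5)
There is a fundamental mismatch here: Conjecture \ref{conj:expansion Lascar} is stated in the paper as an \emph{open conjecture}, and the paper offers no proof of it. All the paper does is observe (in the discussion preceding the conjecture) that a model $N$ with $N\ind^{ns}M$ forces every $\sigma\in\Aut\left(M\right)$ to be Lascar strong, that such an $N$ exists when $M$ is $\omega$-categorical with a CIR (via Lemma \ref{lem:CIR implies just over 0}), and hence that Conjecture \ref{conj:expansion Lascar} is implied by Conjecture \ref{conj:expansion CIR}. Your first paragraph reproduces exactly this reduction, so it is not new content; moreover, as written it has a soft spot at the very last step. You show, tuple by tuple, that $a\equiv^{L}\sigma\left(a\right)$ for every finite $a$, and then conclude that $\sigma$ is Lascar strong. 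Membership in $\Aut f\left(M'\right)$ (a product of restrictions of automorphisms each fixing a model pointwise) does not follow formally from tuple-wise Lascar equivalence; the clean route, which is the paper's, is to apply non-splitting to the whole model at once: transport $\left(M_{0}',M_{1}'\right)$ by an automorphism of the monster carrying $M_{1}'$ onto $M'$, obtaining $N\ind^{ns}M'$, so that $\sigma$ itself extends to an automorphism of $\C$ fixing $N$ pointwise and is therefore Lascar strong by definition. (You also need $M_{0}'$ to sit elementarily in the monster, which is why one should first pass to the canonical relational language with quantifier elimination, as in the paper.)

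The rest of the proposal is a research program, not a proof, and its key step is undercut by the paper's own results. Step (a), finding an $\omega$-categorical Ramsey expansion, is itself a well-known open problem, so nothing is gained unconditionally. More seriously, step (c) — upgrading the non-splitting witness supplied by Proposition \ref{prop:Ramsey has everything but over base} and Fact \ref{fact:Expanding Ramsey by naming elements} to a genuine CIR — is not merely ``delicate'': Remark \ref{rem:Ramsey need not have CIR} and Corollary \ref{cor:Dense trees don't have a groovy ind relation} show that the lexicographically ordered dense tree (Example \ref{exa:lex trees are ramsey}) is Ramsey, stays Ramsey after naming constants, and still admits no CIR, so the proposed upgrade cannot be carried out inside the Ramsey expansion in general and would require a further, essentially new expansion. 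Thus the proposal establishes neither Conjecture \ref{conj:expansion CIR} nor Conjecture \ref{conj:expansion Lascar}; at best it restates the implication between them that the paper already records, together with speculation about how one might attack the former.
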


By the above, this second conjecture is implied by Conjecture \ref{conj:expansion CIR}.

Note also that by Proposition \ref{prop:G0 acts oligomorphically and has no CQ},
the conjecture is true when we replace the Lascar group by the compact
Lascar group. 

It would be interesting to investigate other consequences of having
a CIR. For instance a CIR might have something to say about normal
subgroups. The analysis in \cite{DroHolMac} of automorphism groups
of trees seems to suggest that there is a link: normal subgroups appear
as groups fixing a set of points roughly corresponding to the set
of $x$ such that $x\ind A$ for some CIR $\ind$ and finite set $A$.
A similar phenomenon happens in DLO, where there are only three normal
subgroups (the group of automorphism fixing a cone to the left, to
the right, and the intersection of these two), see \cite[Theorem 2.3.2]{MR645351}. 

In another direction, recall that an automorphism group $G$ (or more
generally a Polish group) has the \emph{small index property} (sip)
if every subgroup of index less than $2^{\aleph_{0}}$ is open. Many
groups are known to have this property, but there are at least two
different types of techniques used to show it\textemdash the Hrushovski
property (or extension property) and direct combinatorial methods\textemdash which
have yet to be unified. We refer to \cite{Mac_survey} for a survey
on this. As in the case of finite topological rank, large compact
quotients seem to be only known obstruction to having sip, although
the situation is more complicated: Lascar \cite[end of Section 2.2]{Lascar_petitindice}
gives an example of an automorphism group without the sip and with
no compact quotients. In fact the compact quotients are hidden in
the stabilizer of a finite set. It seems that one can avoid this counterexample
by restricting to dense subgroups. This leads us to the following
questions.
\begin{question}
Let $M$ be $\omega$-categorical such that $G=\Aut\left(M\right)$
has no compact quotient. Is it true that any dense subgroup of $G$
of index less than $2^{\aleph_{0}}$ is open (and hence is equal to
$G$)?
\end{question}

Note that the assumption of having no compact quotient is necessary.
Indeed, in the example suggested by Cherlin and Hrushovski (the one
described in Remark \ref{rem:constraint for cdcc compact quotient}),
we have that $G=\Aut\left(M\right)$ has a dense subgroup of index
2, see \cite[Section 2.1]{Lascar_petitindice}. 
\begin{question}
Let $M$ be $\omega$-categorical and $N$ an $\omega$-categorical
expansion of $M$. Set $G=\Aut\left(M\right)$ and $H=\Aut\left(N\right)\leq G$.
Assume that $(G,H)$ has no compact quotients and that $H$ has the
sip. Is it true that any dense subgroup of $G$ of index less than
$2^{\aleph_{0}}$ is open?
\end{question}

\section*{Acknowlegements}

Thanks to Alejandra Garrido for bringing up some questions that lead
to this work and to Dugald Macpherson for helping us get a grasp of
the area through several interesting discussions. We would also like
to thank the organizers of the 2016 Permutation Groups workshop in
Banff, during which those interactions took place. Thanks to Katrin
Tent for comments on a previous draft and for telling us about \cite{Kwiatkowska2016}.

We would also like to thank Daoud Siniora for his comments. 

Finally, we would like to thank the anonymous referee for his comments. 

\bibliographystyle{alpha}
\bibliography{common}

\begin{thebibliography}{DHM89}

\bibitem[AH78]{MR503795}
Fred~G. Abramson and Leo~A. Harrington.
\newblock Models without indiscernibles.
\newblock {\em J. Symbolic Logic}, 43(3):572--600, 1978.

\bibitem[Aus88]{MR956049}
Joseph Auslander.
\newblock {\em Minimal flows and their extensions}, volume 153 of {\em
  North-Holland Mathematics Studies}.
\newblock North-Holland Publishing Co., Amsterdam, 1988.
\newblock Notas de Matem\'atica [Mathematical Notes], 122.

\bibitem[Bod15]{MR3497266}
Manuel Bodirsky.
\newblock Ramsey classes: examples and constructions.
\newblock In {\em Surveys in combinatorics 2015}, volume 424 of {\em London
  Math. Soc. Lecture Note Ser.}, pages 1--48. Cambridge Univ. Press, Cambridge,
  2015.

\bibitem[Con17]{MR3663421}
Gabriel Conant.
\newblock An axiomatic approach to free amalgamation.
\newblock {\em J. Symb. Log.}, 82(2):648--671, 2017.

\bibitem[CSS99]{MR1683298}
Gregory Cherlin, Saharon Shelah, and Niandong Shi.
\newblock Universal graphs with forbidden subgraphs and algebraic closure.
\newblock {\em Adv. in Appl. Math.}, 22(4):454--491, 1999.

\bibitem[DHM89]{DroHolMac}
M.~Droste, W.~C. Holland, and H.~D. Macpherson.
\newblock Automorphism groups of infinite semilinear orders (i).
\newblock {\em Proceedings of the London Mathematical Society},
  s3-58(3):454--478, 1989.

\bibitem[DM10]{Darji:2010}
U.~Darji and J.~Mitchell.
\newblock Approximation of automorphisms of the rationals and the random graph.
\newblock {\em Journal of Group Theory}, 14(3):361--388, 2010.

\bibitem[EH90]{MR1042609}
David~M. Evans and P.~R. Hewitt.
\newblock Counterexamples to a conjecture on relative categoricity.
\newblock {\em Ann. Pure Appl. Logic}, 46(2):201--209, 1990.

\bibitem[EH93]{MR1226301}
David~M. Evans and Ehud Hrushovski.
\newblock On the automorphism groups of finite covers.
\newblock {\em Ann. Pure Appl. Logic}, 62(2):83--112, 1993.
\newblock Stability in model theory, III (Trento, 1991).

\bibitem[Eva02]{MR1900903}
David~M. Evans.
\newblock {$\aleph_0$}-categorical structures with a predimension.
\newblock {\em Ann. Pure Appl. Logic}, 116(1-3):157--186, 2002.

\bibitem[GGS17]{Glab2017}
Szymon Glab, Przemyslaw Gordinowicz, and Filip Strobin.
\newblock Dense free subgroups of automorphism groups of homogeneous partially
  ordered sets.
\newblock 2017.
\newblock \url{arXiv:1708.00746}.

\bibitem[Gla81]{MR645351}
A.~M.~W. Glass.
\newblock {\em Ordered permutation groups}, volume~55 of {\em London
  Mathematical Society Lecture Note Series}.
\newblock Cambridge University Press, Cambridge-New York, 1981.

\bibitem[HN16]{Hubicka2016}
Jan Hubi\v{c}ka and Jaroslav Ne\v{s}et\v{r}il.
\newblock All those {R}amsey classes ({R}amsey classes with closures and
  forbidden homomorphisms).
\newblock 2016.
\newblock \url{arXiv: 1606.07979}.

\bibitem[Hod89]{MR1015304}
Wilfrid Hodges.
\newblock Categoricity and permutation groups.
\newblock In {\em Logic {C}olloquium '87 ({G}ranada, 1987)}, volume 129 of {\em
  Stud. Logic Found. Math.}, pages 53--72. North-Holland, Amsterdam, 1989.

\bibitem[Hod93]{Hod}
Wilfrid Hodges.
\newblock {\em Model Theory}, volume~42 of {\em Encyclopedia of mathematics and
  its applications}.
\newblock Cambridge University Press, Great Britain, 1993.

\bibitem[Iva10]{ivanov2010countably}
Aleksander Ivanov.
\newblock A countably categorical theory which is not g-compact.
\newblock {\em Siberian Advances in Mathematics}, 20(2):75--82, 2010.

\bibitem[JM17]{MR3669009}
Julius Jonu\v{s}as and James Mitchell.
\newblock Topological 2-generation of automorphism groups of countable
  ultrahomogeneous graphs.
\newblock {\em Forum Math.}, 29(4):905--939, 2017.

\bibitem[JZ08]{DBLP:journals/jsyml/JunkerZ08}
Markus Junker and Martin Ziegler.
\newblock The 116 reducts of {$(\Bbb Q,<,a)$}.
\newblock {\em J. Symbolic Logic}, 73(3):861--884, 2008.

\bibitem[KM17]{Kwiatkowska2016}
Aleksandra Kwiatkowska and Maciej Malicki.
\newblock Groups of measurable functions.
\newblock 2017.
\newblock \url{arXiv:1612.03106}.

\bibitem[KPT05]{MR2140630}
A.~S. Kechris, V.~G. Pestov, and S.~Todorcevic.
\newblock Fra\"\i ss\'e limits, {R}amsey theory, and topological dynamics of
  automorphism groups.
\newblock {\em Geom. Funct. Anal.}, 15(1):106--189, 2005.

\bibitem[KR07]{Kechris-Rosendal}
Alexander~S Kechris and Christian Rosendal.
\newblock Turbulence, amalgamation and generic automorphisms of homogeneous
  structures.
\newblock {\em Proc. London Math. Soc. (3)}, 94:302--350, 2007.

\bibitem[Las82]{Lascar}
Daniel Lascar.
\newblock On the category of models of a complete theory.
\newblock {\em J. Symbolic Logic}, 47(2):249--266, 1982.

\bibitem[Las91]{Lascar_petitindice}
Daniel Lascar.
\newblock Autour de la propri{\'e}t{\'e} du petit indice.
\newblock {\em Proceedings of the London Mathematical Society},
  s3-62(1):25--53, 1991.

\bibitem[LW80]{MR583847}
A.~H. Lachlan and Robert~E. Woodrow.
\newblock Countable ultrahomogeneous undirected graphs.
\newblock {\em Trans. Amer. Math. Soc.}, 262(1):51--94, 1980.

\bibitem[Mac86]{mac_automorphisms}
Dugald Macpherson.
\newblock Groups of automorphisms of $\aleph_0$-categorical structures.
\newblock {\em Quart. J. Math. Oxford}, 37:449--465, 1986.

\bibitem[Mac11]{Mac_survey}
Dugald Macpherson.
\newblock A survey of homogeneous structures.
\newblock {\em Discrete Mathematics}, 311(15):1599 -- 1634, 2011.
\newblock Infinite Graphs: Introductions, Connections, Surveys.

\bibitem[M{\"u}l16]{Isabel}
Isabel M{\"u}ller.
\newblock Fra\"\i ss\'e structures with universal automorphism groups.
\newblock {\em J. Algebra}, 463:134--151, 2016.

\bibitem[Ne{\v{s}}05]{MR2128088}
Jaroslav Ne{\v{s}}et{\v{r}}il.
\newblock Ramsey classes and homogeneous structures.
\newblock {\em Combin. Probab. Comput.}, 14(1-2):171--189, 2005.

\bibitem[Neu54]{MR0062122}
B.~H. Neumann.
\newblock Groups covered by permutable subsets.
\newblock {\em J. London Math. Soc.}, 29:236--248, 1954.

\bibitem[NR83]{MR692827}
Jaroslav Ne\v{s}et\v{r}il and Vojt\v{e}ch R\"{o}dl.
\newblock Ramsey classes of set systems.
\newblock {\em J. Combin. Theory Ser. A}, 34(2):183--201, 1983.

\bibitem[Pel08]{pelaez2008lascar}
Rodrigo~Pel{\'a}ez Pel{\'a}ez.
\newblock {\em About the Lascar group}.
\newblock PhD thesis, University of Barcelona, 2008.

\bibitem[Sch79]{MR544855}
James~H. Schmerl.
\newblock Countable homogeneous partially ordered sets.
\newblock {\em Algebra Universalis}, 9(3):317--321, 1979.

\bibitem[Sco15]{MR3373612}
Lynn Scow.
\newblock Indiscernibles, {EM}-types, and {R}amsey classes of trees.
\newblock {\em Notre Dame J. Form. Log.}, 56(3):429--447, 2015.

\bibitem[Sil92]{MR1159963}
Stephen Silverman.
\newblock On maps with dense orbits and the definition of chaos.
\newblock {\em Rocky Mountain J. Math.}, 22(1):353--375, 1992.

\bibitem[Sim15]{simon2015guide}
P.~Simon.
\newblock {\em A Guide to NIP Theories}.
\newblock Lecture Notes in Logic. Cambridge University Press, 2015.

\bibitem[Sol05]{Solecki2005}
S{\l}awomir Solecki.
\newblock Extending partial isometries.
\newblock {\em Israel Journal of Mathematics}, 150(1):315--331, Dec 2005.

\bibitem[TZ12]{TentZiegler}
Katrin Tent and Martin Ziegler.
\newblock {\em A course in model theory}, volume~40 of {\em Lecture Notes in
  Logic}.
\newblock Association for Symbolic Logic, La Jolla, CA; Cambridge University
  Press, Cambridge, 2012.

\bibitem[TZ13]{MR3022717}
Katrin Tent and Martin Ziegler.
\newblock On the isometry group of the {U}rysohn space.
\newblock {\em J. Lond. Math. Soc. (2)}, 87(1):289--303, 2013.

\bibitem[Zie02]{MR2018391}
Martin Ziegler.
\newblock Introduction to the {L}ascar group.
\newblock In {\em Tits buildings and the model theory of groups ({W}\"urzburg,
  2000)}, volume 291 of {\em London Math. Soc. Lecture Note Ser.}, pages
  279--298. Cambridge Univ. Press, Cambridge, 2002.

\end{thebibliography}

\end{document}